\documentclass[]{interact}

\usepackage{graphicx,amsthm,amssymb,amsmath,tikz}
\usepackage{bm}
\usepackage{enumitem}
\usepackage{hyperref}
\usepackage[toc,page]{appendix}
\usepackage{color}
\usepackage{hyperref}
\usepackage{url}
\usepackage{breakurl}
\usepackage{comment}

\definecolor{applegreen}{rgb}{0.55, 0.71, 0.0}
\definecolor{ao(english)}{rgb}{0.0, 0.5, 0.0}
\definecolor{bondiblue}{rgb}{0.0, 0.58, 0.71}
\newcommand{\bburl}[1]{\textcolor{blue}{\url{#1}}}

\newcommand{\bbn}{\mathbb{N}}

\newcommand{\bbz}{\mathbb{Z}}
\newcommand{\ds}{\displaystyle}

\newcommand{\eps}{\varepsilon}

\numberwithin{equation}{section}
\theoremstyle{plain}
\newtheorem{dfn}{Definition}[section]
\newtheorem{thm}[dfn]{Theorem}
\newtheorem{cor}[dfn]{Corollary}
\newtheorem{lemma}[dfn]{Lemma}
\newtheorem{conj}{Conjecture}
\newtheorem*{no-num-thm}{Theorem}

\theoremstyle{definition}

\theoremstyle{remark}

\newtheorem*{eg*}{Example}
\setlength{\parindent}{0pt}
\title{Sum of Consecutive Terms of Pell and Related Sequences}
\author{
\name{Navvye Anand\textsuperscript{a}\thanks{CONTACT Navvye Anand. Email: navvye.anand@caltech.edu}} \name{Amit Kumar Basistha\textsuperscript{b}\thanks{CONTACT Amit Kumar Basistha. Email: basisthaamitkumar2@gmail.com}} \name{Kenny B. Davenport\textsuperscript{c}} \name{Alexander Gong\textsuperscript{d}\thanks{CONTACT Alexander Gong. Email: ag4668@columbia.edu}} \name{Florian Luca\textsuperscript{e}\thanks{CONTACT Florian Luca. Email: Florian.Luca@wits.ac.za}}
\name{Steven J. Miller\textsuperscript{f}\thanks{CONTACT Steven J. Miller. Email: sjm1@williams.edu}} \name{Alexander Zhu\textsuperscript{g}\thanks{CONTACT Alexander Zhu. Email: lazostaf@gmail.com}}
}

\date{Summer 2023}

\begin{document}

\maketitle

\begin{abstract}
We study new identities related to the sums of adjacent terms in the Pell sequence,
defined by $P_{n} := 2P_{n-1}+P_{n-2}$ for $ n\geq 2$ and $P_{0}=0, P_{1}=1$, and generalize these identities for many similar sequences.
We prove that the sum of $N>1$ consecutive Pell numbers is a fixed integer
multiple of another Pell number if and only if $4\mid N$. We consider
the generalized Pell $(k,i)$-numbers defined by $p(n) :=\ 2p(n-1)+p(n-k-1) $ for $n\geq k+1$, with $p(0)=p(1)=\cdots =p(i)=0$ and $p(i+1)=\cdots = p(k)=1$
for $0\leq i\leq k-1$, and prove that the sum of $N=2k+2$ consecutive
terms is a fixed integer multiple of another term in the sequence. We also prove that for the generalized Pell $(k,k-1)$-numbers such a relation does not exist when $N$ and $k$ are odd. We give analogous results for the Fibonacci and other related second-order recursive sequences.
\end{abstract}


\section{Introduction}
We first review some standard notation, and then describe our results.
The \emph{Fibonacci numbers} are defined by
\begin{equation}\label{def:fibonacci}
    F(n) \ :=\ \left\{\begin{array}{cc}
     0 & n=0  \\
     1 & n=1  \\
     F(n-1)+F(n-2) &\ n\geq 2,\\
\end{array}\right.
\end{equation}
and have a closed form given by \emph{Binet's formula}
\begin{equation}\label{eqn: Binnet}
    F(n)\ =\ \frac{\varphi^n-\psi^n}{\sqrt{5}},
\end{equation}
where
\[\varphi\ =\ \frac{1+\sqrt{5}}{2}\qquad \text{and}\qquad
\psi\ =\ \frac{1-\sqrt{5}}{2}\ =\ -\frac{1}{\varphi}.\]

This is the simplest depth two constant-coefficient recurrence to study (the simpler depth one recurrences are just pure geometric sequences). They satisfy numerous interesting identities and arise in various areas; see for example \cite{tk}. We consider the Fibonacci numbers and other recursively defined sequences of numbers. In particular, we are interested in \emph{Pell numbers} and their generalizations; we state these sequences and then our results.

\begin{dfn}\label{dfn:pell}
\emph{Classical Pell numbers} are defined by the following recurrence and initial conditions:
\begin{equation}\label{rec:pell}
P(n)\ :=\ \left\{\begin{array}{cc}
     0 & n=0  \\
     1 & n=1  \\
     2P(n-1)+P(n-2) &\ n\geq 2.\\
\end{array}\right.
\end{equation}
\end{dfn}
\begin{dfn}\label{dfn:pell-lucas}
    The \emph{Pell-Lucas sequence} or the \emph{Companion Pell sequence} is defined by
    \begin{equation}
Q(n)\ :=\ \left\{\begin{array}{cc}
     2 & n=0  \\
     2 & n=1  \\
     2Q(n-1)+Q(n-2) &\ n\geq 2.\\
\end{array}\right.
\end{equation}
\end{dfn}
\begin{dfn}\label{dfn:Lucas}
    The \emph{Lucas sequence} is defined by
    \begin{equation}
L(n)\ :=\ \left\{\begin{array}{cc}
     2 & n=0  \\
     1 & n=1  \\
     L(n-1)+L(n-2) &\ n\geq 2.\\
\end{array}\right.
\end{equation}
\end{dfn}

\begin{dfn}\label{BinetFormula}
Every sequence defined by a homogeneous linear recurrence with constant coefficients has a closed-form expression. If the terms of such a sequence can be expressed in a closed form like \eqref{eqn: Binnet}, this is called \emph{a generalized Binet formula} (see \cite{BBILMT, cl}).
\end{dfn}

\begin{eg*}
Let
\begin{equation}
\label{eqn:a-b}
a\ :=\ 1+\sqrt{2}\qquad \text{and}\qquad b\ :=\ 1-\sqrt{2}\ =\ -\frac{1}{a}.
\end{equation}
Then the $n$\textsuperscript{${\rm th}$} Pell Number is given by the generalized Binet formula:
\begin{equation}
\label{P_n}
P(n)\ =\ \frac{a^n-b^n}{2\sqrt{2}}.
\end{equation}
\end{eg*}

\begin{dfn}
  Throughout the paper, we let  $C: \mathbb{N} \to \mathbb{N}$ denote a natural valued function on natural numbers.
\end{dfn}

\subsection{Motivation and Results}

We analyze the relationships between sums of consecutive numbers in recurrence sequences. The first theorem below is a generalization of an observation made by the third named author to the sixth named author (for a problem for the Pi Mu Epsilon Journal) for sums of eight consecutive terms.

\begin{thm}\label{thm:sum-4N-pell}
For any $N\in\bbn$, the sum of $4N$ consecutive Pell numbers is equal to a constant (depending on $N$) multiplied by the $(2N+1)$\textsuperscript{{\rm st}} term of the consecutive terms. In particular, we have
\begin{equation}
\label{eqn:sum-4N-pell}
\sum\limits_{i=0}^{4N-1} P(n+i)\ =\ \frac{(a^{2N}-b^{2N})}{\sqrt{2}} P(n+2N),
\end{equation}
where \[a\ =\ 1+\sqrt{2}\qquad \text{and}\qquad b\ =\ 1-\sqrt{2}\ =\ -\frac{1}{a}.\]
\end{thm}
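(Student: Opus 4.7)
The plan is to prove \eqref{eqn:sum-4N-pell} by direct computation using the generalized Binet formula \eqref{P_n}, reducing both sides to a common closed-form expression in $a^n$ and $b^n$. No induction or combinatorial identity is needed; the key algebraic facts that make everything collapse are $a-1=\sqrt{2}$, $b-1=-\sqrt{2}$, and $ab=-1$.

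First I would substitute Binet's formula into the left-hand side and split it as a difference of two geometric series:
\begin{equation*}
\sum_{i=0}^{4N-1} P(n+i)\ =\ \frac{1}{2\sqrt{2}}\left[a^n \sum_{i=0}^{4N-1} a^i - b^n \sum_{i=0}^{4N-1} b^i\right]\ =\ \frac{1}{2\sqrt{2}}\left[\frac{a^n(a^{4N}-1)}{a-1} - \frac{b^n(b^{4N}-1)}{b-1}\right].
\end{equation*}
Since $a-1=\sqrt{2}$ and $b-1=-\sqrt{2}$, both denominators become $\pm\sqrt{2}$, and after combining the fractions the left-hand side simplifies to
\begin{equation*}
\frac{1}{4}\left[(a^{n+4N}+b^{n+4N}) - (a^n+b^n)\right].
\end{equation*}

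Next I would expand the right-hand side, again via \eqref{P_n}, as
\begin{equation*}
\frac{a^{2N}-b^{2N}}{\sqrt{2}}\cdot P(n+2N)\ =\ \frac{(a^{2N}-b^{2N})(a^{n+2N}-b^{n+2N})}{4}.
\end{equation*}
Multiplying this out gives $a^{n+4N}+b^{n+4N} - (ab)^{2N}(a^n+b^n)$ divided by $4$, and since $ab=-1$ we have $(ab)^{2N}=1$, so the result matches the simplified left-hand side exactly.

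The only place where anything genuinely interesting happens is the cancellation $(ab)^{2N}=1$; the evenness of the exponent $2N$ is precisely what forces the cross terms $a^{2N}b^{n+2N}$ and $b^{2N}a^{n+2N}$ to combine into $+(a^n+b^n)$ rather than $-(a^n+b^n)$. This is the structural reason one expects the factor of $4$ in $N=4N$ (so that the block length of consecutive terms matches the parity constraint) to be essential, and it foreshadows the more delicate $4\mid N$ characterization promised in the abstract. I do not anticipate any real obstacle beyond the bookkeeping above.
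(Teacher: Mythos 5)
Your proposal is correct and follows essentially the same route as the paper: both substitute Binet's formula \eqref{P_n}, sum the two geometric series using $a-1=\sqrt{2}$ and $b-1=-\sqrt{2}$, and close the gap via $ab=-1$ (the paper factors the left side as $\frac{1}{4}(a^{2N}-b^{2N})(a^{n+2N}-b^{n+2N})$ while you expand the right side to the same symmetric expression, a purely cosmetic difference). One tiny quibble: in your closing remark the cross terms contribute $-(a^n+b^n)$, not $+(a^n+b^n)$, exactly as your own displayed computation shows.
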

\begin{proof}

The $n$\textsuperscript{${\rm th}$} Pell Number is given by \eqref{P_n}:
\[P(n)\ =\ \frac{a^n-b^n}{2\sqrt{2}}.\]
Therefore, we have
\begin{align}
\nonumber
\sum\limits_{i=0}^{4N-1}P(n+i)\ &=\ \sum\limits_{i=0}^{4N-1}\frac{a^{n+i}-b^{n+i}}{2\sqrt{2}}\\
\label{sum-8-proof-1}
&=\ \frac{a^n}{2\sqrt{2}}\sum\limits_{i=0}^{4N-1}a^i-\frac{b^n}{2\sqrt{2}}\sum\limits_{i=0}^{4N-1}b^i\\
\nonumber
&=\ \frac{a^n}{4}(a^{4N}-1)-\frac{b^n}{4}(1-b^{4N})\\
\nonumber
&=\ \frac{a^{n+2N}}{4}(a^{2N}-b^{2N})-\frac{b^{n+2N}}{4}(a^{2N}-b^{2N})\\
\nonumber
&=\ \frac{(a^{2N}-b^{2N})}{4}(a^{n+2N}-b^{n+2N})\\
\nonumber
&=\  \frac{(a^{2N}-b^{2N})}{\sqrt{2}} P(n+2N).
\end{align}
\end{proof}
The above result motivates the question: \emph{For which numbers $n \in \mathbb{N}$ does the sum of $n$ consecutive Pell numbers equal a fixed integer multiple of another Pell number?}\\

We answer this question for the Pell, Fibonacci and other related sequences. In particular, for the Pell sequence we observe that multiples of $4$ (see Theorem \ref{thm:sum-4N-pell}) and the trivial case of $N=1$ are the only values of $N$ that work. We then extend our methods to generalized Pell numbers and present a conjecture regarding when the sum of consecutive generalized Pell numbers equals a fixed integer multiple of another generalized Pell number. Additionally, we describe several interesting properties of Pell numbers using tilings of an $n \times 1$ board with polyominoes.


\section{Identities and Preliminary Results}
The following lemmas describe standard identities relating Pell, Lucas and Fibonacci sequences, and are used extensively in the rest of the paper.
These have been derived from the \cite{OEIS} and \cite{tk}. We provide the proofs in the appendix for completeness.

\begin{lemma}\label{lem:pell-(4N+2)}
For any non-negative integer $k$ the Pell numbers satisfy
\begin{equation}\label{eqn:pell-(4N+2)}
P(n+k)+(-1)^kP(n-k)\ =\ Q(k)P(n),
\end{equation}
where $Q(k)$ is the $k$\textsuperscript{{\rm th}} term of the Pell-Lucas sequence given in Definition \ref{dfn:pell-lucas}.
\end{lemma}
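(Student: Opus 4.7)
The plan is to prove the identity using the Binet-type closed forms for both sequences, exploiting the crucial algebraic fact that $ab = (1+\sqrt2)(1-\sqrt2) = -1$.

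First, I would record the Binet formula for $Q$. Since $Q$ satisfies the same recurrence as $P$, its closed form is $Q(k) = \alpha a^k + \beta b^k$ for constants $\alpha,\beta$ determined by the initial conditions $Q(0)=Q(1)=2$. Solving $\alpha+\beta = 2$ and $\alpha a + \beta b = 2$ gives $\alpha = \beta = 1$, so $Q(k) = a^k + b^k$. (This is the companion Lucas-type formula one expects.) I would then write out
\[
P(n+k) + (-1)^k P(n-k) \;=\; \frac{a^{n+k}-b^{n+k}}{2\sqrt2} + (-1)^k\,\frac{a^{n-k}-b^{n-k}}{2\sqrt2}.
\]

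Next comes the key simplification: replace $(-1)^k$ by $(ab)^k$, which converts the negative-index terms into positive-index ones. Specifically, $(ab)^k a^{n-k} = a^n b^k$ and $(ab)^k b^{n-k} = a^k b^n$, so the numerator on the right-hand side becomes
\[
a^{n+k} - b^{n+k} + a^n b^k - a^k b^n \;=\; a^n(a^k+b^k) - b^n(a^k+b^k) \;=\; (a^n-b^n)(a^k+b^k).
\]
Dividing by $2\sqrt2$ yields $P(n)\cdot(a^k+b^k) = P(n)Q(k)$, which is the claimed identity.

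There is no real obstacle here beyond verifying the Binet formula for $Q$, which is a routine initial-condition check. The whole argument is essentially the telescoping identity $(x-y)(x^k+y^k) = x^{n+k} - y^{n+k} - (xy)^k(x^{n-k}-y^{n-k}) \cdot(\text{sign adjustment})$ made precise by $ab=-1$. An alternative route would be induction on $k$ using the Pell recurrence, but the Binet approach is cleaner and avoids case analysis on the parity of $k$.
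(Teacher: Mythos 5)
Your proof is correct, but it takes a genuinely different route from the paper's. The paper proves the identity by strong induction on $k$: it checks the base cases $k=0$ and $k=1$ directly (both reduce to $2P(n)=Q(0)P(n)=Q(1)P(n)$), then uses the Pell recurrence to write $P(n+k)=2P(n+k-1)+P(n+k-2)$ and $P(n-k)=P(n-k+2)-2P(n-k+1)$, regroups so that the two inductive hypotheses apply, and finishes with $2Q(k-1)+Q(k-2)=Q(k)$. Your argument instead derives the closed form $Q(k)=a^k+b^k$ from the initial conditions and uses the identity $(-1)^k=(ab)^k$ to factor the numerator as $(a^n-b^n)(a^k+b^k)$; every step checks out, including the computation $\alpha=\beta=1$ and the algebra $a^{n+k}-b^{n+k}+a^nb^k-a^kb^n=(a^n-b^n)(a^k+b^k)$. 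What each approach buys: your Binet computation is shorter, avoids the two-step induction entirely, and makes transparent why the companion sequence $Q$ appears (it is exactly the symmetric factor $a^k+b^k$); it also extends painlessly to any recurrence $f(n)=rf(n-1)+sf(n-2)$ with $ab=-s$, which is in the spirit of the paper's later generalizations. The paper's induction stays entirely within integer arithmetic and the defining recurrences, requiring no closed forms or irrationalities, which is why it is relegated to an appendix as a routine verification. One small caveat common to both proofs: the statement involves $P(n-k)$, so either one should assume $n\ge k$ or note that the Binet formula consistently defines $P$ at negative indices via $P(-m)=(-1)^{m+1}P(m)$, under which your computation remains valid verbatim.
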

\begin{proof}
    For a proof, see Appendix \ref{subsec:proof_lem:pell-(4N+2)}.
\end{proof}

\begin{lemma}\label{lem:fib-(4N)} For the Fibonacci and Lucas numbers we have
\begin{equation}\sum\limits_{i=0}^{4N-1}F(n+i)\ =\ F(2N)L(n+2N+1).\end{equation}
\end{lemma}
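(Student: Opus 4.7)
The plan is to mirror the proof of Theorem \ref{thm:sum-4N-pell} word for word, but with the Fibonacci Binet formula \eqref{eqn: Binnet} in place of the Pell Binet formula \eqref{P_n}, and then to recognize the result as the stated product $F(2N)L(n+2N+1)$ using the companion relation between $F$ and $L$.

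Concretely, I would begin by writing
\[
\sum_{i=0}^{4N-1} F(n+i) \;=\; \frac{1}{\sqrt{5}}\left(\varphi^n \sum_{i=0}^{4N-1}\varphi^i \;-\; \psi^n\sum_{i=0}^{4N-1}\psi^i\right),
\]
summing each geometric progression to get $(\varphi^{4N}-1)/(\varphi-1)$ and $(\psi^{4N}-1)/(\psi-1)$. The key simplification, which plays the role that $a-1=\sqrt2$, $b-1=-\sqrt2$ played in Theorem \ref{thm:sum-4N-pell}, is that $\varphi-1=-\psi$ and $\psi-1=-\varphi$ (both follow from $\varphi+\psi=1$), together with $\varphi\psi=-1$ so that $1/\psi=-\varphi$ and $1/\varphi=-\psi$. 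After multiplying through, this collapses the sum to
\[
\sum_{i=0}^{4N-1} F(n+i)\;=\;\frac{1}{\sqrt5}\bigl(\varphi^{n+4N+1}-\varphi^{n+1}-\psi^{n+4N+1}+\psi^{n+1}\bigr).
\]

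The second step is to recognize the right-hand side as $F(2N)L(n+2N+1)$. Expanding this product using Binet gives
\[
F(2N)L(n+2N+1)\;=\;\frac{1}{\sqrt5}\bigl(\varphi^{n+4N+1}+\varphi^{2N}\psi^{n+2N+1}-\psi^{2N}\varphi^{n+2N+1}-\psi^{n+4N+1}\bigr),
\]
and the middle two terms collapse to $\psi^{n+1}$ and $\varphi^{n+1}$ respectively because $(\varphi\psi)^{2N}=1$. Matching this against the boxed expression above finishes the proof.

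I do not expect any real obstacle here: the computation is essentially routine once one chooses between the two natural endpoints (the telescoping form $F(n+4N+1)-F(n+1)$ versus the product form $F(2N)L(n+2N+1)$). The only mildly delicate point is bookkeeping the sign changes coming from $\varphi-1=-\psi$ and $\psi-1=-\varphi$, so I would write these out explicitly rather than citing them tacitly. An equally valid (and arguably cleaner) alternative is to prove it via the known identities $F(m)+F(m')=F\!\left(\tfrac{m+m'}{2}\right)L\!\left(\tfrac{m-m'}{2}\right)$ or $F(m+k)+(-1)^kF(m-k)$, pairing the $4N$ terms symmetrically about the midpoint $n+2N-\tfrac12$; but since the statement closely parallels Theorem \ref{thm:sum-4N-pell}, I prefer the Binet-formula approach for uniformity with the rest of the paper.
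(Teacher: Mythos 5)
Your proof is correct; I checked both computations. The geometric sums collapse, via $\varphi-1=-\psi$, $\psi-1=-\varphi$ and $\varphi\psi=-1$, to $\frac{1}{\sqrt5}\bigl(\varphi^{n+4N+1}-\varphi^{n+1}-\psi^{n+4N+1}+\psi^{n+1}\bigr)$, and expanding $F(2N)L(n+2N+1)$ by Binet and using $(\varphi\psi)^{2N}=1$ yields the identical expression. Your route is, however, genuinely different from the paper's. The paper (Appendix \ref{subsec:proof_lem:fib-(4N)}) argues by induction on $n$: the base case $n=0$ is obtained from the partial-sum formula \eqref{Fib:Prop-4} together with the addition formula \eqref{Fib:Prop-2}, Cassini's identity \eqref{Fib:Prop-1} and $F(n-1)+F(n+1)=L(n)$, and the inductive step splits $\sum_{i=0}^{4N-1}F(n+1+i)$ as $\sum_{i=0}^{4N-1}F(n+i)+\sum_{i=0}^{4N-1}F(n-1+i)$, applies the hypothesis twice, and invokes the Lucas recurrence. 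That argument stays entirely within integer identities; yours is shorter, non-inductive, and uniform with the proof of Theorem \ref{thm:sum-4N-pell}, at the cost of computing in $\mathbb{Q}(\sqrt5)$ --- a trade-off the paper itself makes in the Pell case, so either choice is defensible. One small caveat about your closing aside: the telescoped form of the sum is the \emph{difference} $F(n+4N+1)-F(n+1)$, and the identity that finishes that alternative route is $F(m+k)-(-1)^kF(m-k)=F(k)L(m)$ with $m=n+2N+1$, $k=2N$; the sum identity $F(m+k)+(-1)^kF(m-k)=L(k)F(m)$ would instead produce $L(2N)F(n+2N+1)$, and the suggested symmetric pairing about $n+2N-\tfrac12$ cannot be applied term-by-term since the paired indices sum to the odd number $2n+4N-1$. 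Since you do not actually execute that route, this does not affect the correctness of the proof you give.
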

\begin{proof}
    For a proof, see Appendix \ref{subsec:proof_lem:fib-(4N)}
\end{proof}
 \begin{lemma}\label{lem:sum-fib}
For the Fibonacci numbers, we have for all positive integers $n$
\begin{equation*}
\varphi^{n}\ =\ F(n)\varphi+F(n-1) \quad \text{ where } \ \quad  \varphi \ = \ \frac{1+\sqrt{5}}{2}.
\end{equation*}
    \end{lemma}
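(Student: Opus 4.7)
The plan is to prove this by induction on $n$, using the defining property that $\varphi$ satisfies $\varphi^2 = \varphi + 1$ (since $\varphi$ is a root of $x^2 - x - 1 = 0$). This is a standard identity but worth spelling out because the key algebraic input is nothing more than the minimal polynomial of $\varphi$, which matches perfectly with the Fibonacci recurrence.

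For the base case, I would verify $n=1$ directly: since $F(1) = 1$ and $F(0) = 0$, the right-hand side equals $1 \cdot \varphi + 0 = \varphi$, which is $\varphi^1$. (A check at $n=2$ is automatic since $\varphi^2 = \varphi + 1 = F(2)\varphi + F(1)$, but is not needed.)

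For the inductive step, I would assume $\varphi^n = F(n)\varphi + F(n-1)$ and compute
\begin{align*}
\varphi^{n+1} \ &=\ \varphi \cdot \varphi^n \ =\ \varphi\bigl(F(n)\varphi + F(n-1)\bigr) \\
&=\ F(n)\varphi^2 + F(n-1)\varphi \\
&=\ F(n)(\varphi + 1) + F(n-1)\varphi \\
&=\ \bigl(F(n) + F(n-1)\bigr)\varphi + F(n) \\
&=\ F(n+1)\varphi + F(n),
\end{align*}
where the substitution $\varphi^2 = \varphi + 1$ and the Fibonacci recurrence \eqref{def:fibonacci} are the only nontrivial ingredients. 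This completes the induction.

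There is no real obstacle here; the only subtlety is that the identity genuinely requires the Fibonacci recurrence $F(n+1) = F(n) + F(n-1)$ to match the $\varphi$-powers, so the proof is essentially a translation between the algebraic identity $\varphi^2 = \varphi + 1$ and the combinatorial recurrence. An alternative route would be to use Binet's formula \eqref{eqn: Binnet} directly: write $F(n)\varphi + F(n-1) = \tfrac{1}{\sqrt{5}}\bigl(\varphi^n(\varphi - \psi) - \psi^{n-1}(\varphi\psi - 1)\bigr)$ and simplify using $\varphi - \psi = \sqrt{5}$ and $\varphi\psi = -1$, but induction is cleaner.
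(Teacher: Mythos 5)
Your proof is correct and takes essentially the same approach as the paper: induction driven by the relation $\varphi^2 = \varphi + 1$. The only (cosmetic) difference is that you run a one-step induction by multiplying $\varphi^n$ by $\varphi$, whereas the paper uses the two-term form $\varphi^{n} = \varphi^{n-1} + \varphi^{n-2}$ and therefore needs both base cases $n=1$ and $n=2$.
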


    \begin{proof}
        We proceed by induction on $n$. When $n=1$, the statement of the lemma is $\varphi=\varphi$, which is trivially true. Similarly, $n=2$ is true because $\varphi^2=\varphi+1$ is true as $\frac{1+\sqrt{5}}{2}$ is a root of the characteristic polynomial $x^2 - x - 1 = 0$. Thus we may assume the statement holds for all natural numbers less than $n \geq 3$, and show it holds for $n$:
        \begin{align*}
            \varphi^{n}\ &=\ \varphi^{n-1}+\varphi^{n-2}\\
            &=\ (F(n-1)\varphi + F(n-2)) + (F(n-2)\varphi+  F(n-2))\\
            &=\ (F(n-1)+F(n-2))\varphi+(F(n-2)+F(n-3))\\
            &=\ F(n)\varphi+F(n-1),
        \end{align*}
        which completes the proof.
    \end{proof}

\section{Some General Results}

One of our main goals is to determine not only when the sum of consecutive terms in a recurrence is a fixed multiple of a term of the recurrence, but further to determine which term. In this section we consider the following sequence.
\begin{dfn}
    Let $r$ be a non-negative integer. Consider a sequence $\{f(n)\}$ of non-negative integers recursively defined by $$ f(n) \ := \  rf(n-1) + f(n-2)$$ with initial conditions so that it is not identically zero (we call this a non-degenerate sequence).
\end{dfn}

Not only is this a natural generalization of the Fibonacci numbers (which are just the $r=1$ case), but similar to how the Fibonacci numbers count various objects, this sequence as well has a combinatorial interpretation. In \cite{DHW} the authors show that $f(n)$ is the number of $r$-regular words over $\{1, 2, \dots, n\}$ avoiding the patterns $122$ and $213$ (this means we cannot form a sub-word with three objects with this relative ordering). Additionally, $\left\{f(n)\right\}$ also makes a surprising appearance in elliptic curve research. Recent work in \cite{PiWa} shows under certain circumstances, there exists a bijection between the set of integral points on elliptic curves of the form $y^2 = (r^2 + 4)x^4 - 4$ and the set of squares in $\left\{f(n)\right\}$ with odd indices.

If we set $$\alpha\ :=\ \frac{r+\sqrt{r^2+4}}{2} \qquad \text{ and } \qquad \beta\ :=\ \frac{r-\sqrt{r^2+4}}{2} $$ then the generalized Binet formula (see \cite{BBILMT, cl})) yields that there exist complex numbers $\alpha$ and $\beta$ such that $f(n) \ =\  a\alpha^n + b\beta^n.$

\begin{thm}\label{linear relations}  
Fix any integer $N > 0$. If there is an integer $C(N)$ such that for every sufficiently large $n$ there exists an integer index $\, j(n;N)\,$ such that the following equation holds:
\[\sum\limits_{i=0}^{N-1}f(n+i)\ =\ C(N) \cdot f(j(n;N)),\]
then there is an integer $k(N)$ such that \[j(n;N) \ = \ n + k(N) \qquad \text{ and } \qquad k(N) \in \left[\frac{N}2 , N\right]. \]
\end{thm}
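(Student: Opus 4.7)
The strategy is to reduce the hypothesis to a pair of exponential identities, extract a clean combinatorial master equation in the companion Lucas sequences, and read off both bounds from monotonicity.

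Writing $f(n) = A\alpha^n + B\beta^n$ with $A, B \neq 0$ by non-degeneracy and $|\beta| = 1/\alpha < 1 < \alpha$ (for $r \geq 1$), I first substitute the Binet formula into $\sum_{i=0}^{N-1} f(n+i) = C \cdot f(j(n;N))$:
\[A\alpha^n \cdot \tfrac{\alpha^N - 1}{\alpha-1} + B\beta^n \cdot \tfrac{\beta^N - 1}{\beta - 1} \;=\; C\bigl(A\alpha^{j(n)} + B\beta^{j(n)}\bigr).\]
Dividing by $A\alpha^n$ and letting $n \to \infty$, the $\beta$-parts decay, so $\alpha^{j(n) - n}$ converges to a positive real; since $j(n)-n$ is an integer, it is eventually constant, giving $j(n;N) = n + k$ for some integer $k = k(N)$. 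The identity then equates two elements of the span of the linearly independent sequences $\{\alpha^n\}, \{\beta^n\}$, so matching coefficients forces
\[\sum_{i=0}^{N-1}\alpha^i = C\alpha^k \quad \text{and} \quad \sum_{i=0}^{N-1}\beta^i = C\beta^k.\]

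Passing to the companion Lucas sequences $U_m := (\alpha^m - \beta^m)/(\alpha-\beta)$ and $V_m := \alpha^m + \beta^m$ (both satisfying $x_n = rx_{n-1} + x_{n-2}$, with $U_0=0, U_1=1$, $V_0=2, V_1=r$, and extended to negative indices via $U_{-n} = (-1)^{n+1}U_n$), subtracting and dividing by $\alpha - \beta$, and separately adding the two displayed equations, yields $CU_k = \sum_{i=0}^{N-1} U_i$ and $CV_k = \sum_{i=0}^{N-1}V_i$. Using the telescoping identities $r\sum_{i=1}^{M} U_i = U_{M+1} + U_M - 1$ and $r\sum_{i=1}^{M}V_i = V_{M+1} + V_M - r - 2$ (from $rU_i = U_{i+1} - U_{i-1}$ and its analogue), eliminating $C$ by cross-multiplication, and invoking the standard identities $U_m V_n - V_m U_n = 2(-1)^n U_{m-n}$ and $V_k = rU_k + 2U_{k-1}$, I reach the master equation
\[(-1)^k\bigl(U_{N-k} + U_{N-1-k}\bigr) \;=\; U_{k+1} - U_k. \qquad (\star)\]

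Both bounds now fall out of $(\star)$. The right-hand side equals $(r-1)U_k + U_{k-1} \geq 0$, while for $k \geq 1$ the left-hand side has sign $(-1)^k$ times a non-negative quantity; a short sign check forces $k$ to be even. Setting $h(m) := U_{m+1} + U_m$ and $g(m) := U_{m+1} - U_m$, $(\star)$ reads $h(N-k-1) = g(k)$, and the essential observation is $h(m) - g(m) = 2U_m > 0$ for $m \geq 1$ while $h, g$ are both strictly increasing for $r \geq 1$. If $N-k-1 \geq k$, then $h(N-k-1) \geq h(k) > g(k)$, contradicting $(\star)$; hence $N-k-1 < k$, giving $k > (N-1)/2$, and so $k \geq N/2$ by integrality. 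The only borderline to exclude is $N$ odd with $k = (N-1)/2$, which would force $h(k) = g(k)$, i.e., $U_k = 0$, impossible for $k \geq 1$. For the upper bound, evaluating the left side of $(\star)$ with $k \geq N+1$ via $U_{-n} = (-1)^{n+1}U_n$ and comparing signs and magnitudes with $g(k)$ rules out $k > N$.

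The principal obstacle is the clean derivation of $(\star)$: it requires carefully assembling the coefficient matching, the two telescoping sums, and two Lucas-sequence identities. Once $(\star)$ is available, the key inequality $h(m) > g(m)$ together with monotonicity produces both bounds with only small boundary case checks.
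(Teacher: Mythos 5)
Your argument is correct in substance but takes a genuinely different route from the paper's for the heart of the theorem. For the first conclusion ($j(n;N)=n+k$ with $k$ an eventually constant integer) the two proofs share the same idea: the $\beta$-part decays, so $\alpha^{j(n)-n}$ converges to $\frac{\alpha^N-1}{C(\alpha-1)}$ and an integer-valued convergent sequence stabilizes. Your version is cleaner, though, because after stabilization you match coefficients of the linearly independent sequences $\{\alpha^n\}$, $\{\beta^n\}$ and extract the exact pair $\sum_{i=0}^{N-1}\alpha^i=C\alpha^k$ and $\sum_{i=0}^{N-1}\beta^i=C\beta^k$, whereas the paper runs an $\varepsilon$-estimate and only ever uses the $\alpha$-equation. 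The real divergence is in the bounds on $k$: the paper proves $k\ge N/2$ by grouping $C=\sum_{i=0}^{N-1}\alpha^{i-k}$ as $1+\sum_{i=1}^{k}(\alpha^i+\alpha^{-i})+\sum_{i=2k+1}^{N-1}\alpha^{i-k}$ and showing the $\sqrt{r^2+4}$-component is strictly positive when $k<N/2$ (so $C$ would be irrational), and proves $k\le N$ by the crude size estimate $\sum_{i=0}^{n+N-1}f(i)<f(n+N+1)$; you instead combine both Binet equations into the single exact identity $(-1)^k(U_{N-k}+U_{N-1-k})=U_{k+1}-U_k$, which I have verified (cross-multiplying $rCU_k=U_N+U_{N-1}-1$ against $rCV_k=V_N+V_{N-1}+r-2$ and using $U_mV_n-V_mU_n=2(-1)^nU_{m-n}$ together with $V_k=U_{k+1}+U_{k-1}$ does yield it), and then both bounds, plus the bonus conclusion that $k$ is even, drop out of sign and monotonicity considerations. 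Your route buys an exact Diophantine constraint rather than inequalities, which is strictly more information; the paper's route is shorter and avoids the identity bookkeeping. Two small repairs: $g(m)=U_{m+1}-U_m=(r-1)U_m+U_{m-1}$ is not strictly increasing when $r=1$ (e.g.\ $g(2)=g(3)=1$ for Fibonacci), so the monotonicity steps should be phrased with $g$ nondecreasing plus an explicit check of the few indices where equality occurs; and the case $N=1$ gives $k=0\notin[\tfrac{1}{2},1]$, an edge case of the theorem statement itself that your identity ($U_N+U_{N-1}=1$ forces $N=1$) actually isolates correctly, while it silently breaks the paper's lower-bound argument (the residual sum there is empty), so no fault attaches to you on that point.
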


  \begin{proof} Define $$ b\ :=\ \frac{\alpha^N-1}{C(N) (\alpha-1)} \qquad \text { and } \qquad k(N) \ := \ \log\limits_{\alpha}b,$$ with $\alpha,\  \beta$ and $f$ as above.

     Note that $|\alpha|\ >\ |\beta|$ and $|\beta|\ <\ 1$. Then by the generalized Binet's formula $$f(n)\ =\ a\alpha^n + b\beta^n$$ and
     \begin{align*}\lim_{n\to\infty}|f(n)-a\alpha^n|\ =\ 0.
     \end{align*}
     This implies that for any $\eps > 0$ there exists a natural number $M$ such that for all $n>M$,
     \begin{align}
         |f(n)-a\alpha^n|\ <\ \frac{C(N) \cdot \eps}{2N}.
     \end{align}
     We choose  $M$ sufficiently large such that
     \begin{align}
         |f(j(n;N))-a\alpha^{j(n;N)}|\ <\ \frac{\eps}{2}
     \end{align}
     for all $n>M$. \\  \\
     \ Then
    \begin{equation}\label{bound}
    \begin{aligned}[b]
    &\left|a\alpha^{j(n;N)}-\frac{1}{C(N)}\sum\limits_{i=0}^{N-1}a\alpha^{n+i}\right| \\
    & \ <\ \left|a\alpha^{j(n;N)}-f(j(n;N))\right|+\frac{1}{C(N)}\left|C(N) \cdot f(j(n;N))-\sum\limits_{i=0}^{N-1}a\alpha^{n+i}\right|\\
    & \ <\ \frac{\eps}{2}+\frac{1}{C(N) }\left|\sum\limits_{i=0}^{N-1}f(n+i)-\sum\limits_{i=0}^{N-1}a\alpha^{n+i}\right|\\
    & \ <\ \frac{\eps}{2}+\frac{\eps}{2}\ =\ \eps.\\
    \end{aligned}
    \end{equation}

    We now have
    \begin{equation*}
        \frac{1}{C(N)}\sum\limits_{i=0}^{N-1}\alpha^{n+i}\ =\ \alpha^{n+\log_{\alpha}b}.
    \end{equation*}
   If $k(N)\not\in\bbn$ then consider $m=\min\{ n+ k(N), j(n;N)\}$. The conditions on $f$ imply that it is an increasing sequence, therefore $j(n;N)\to\infty$ as $n \to\infty$. Hence $m\to\infty$ as $n\to\infty$. We also note that

    \begin{align}\label{eqn:bound2}
       \left|a\alpha^{j(n;N)}-\frac{1}{C(N)}\sum\limits_{i=0}^{N-1}a\alpha^{n+i}\right|&\ =\  \left|a\right|\alpha^m\left|\alpha^{\left| j(n;N)-n-k(N)\right|}-1\right|\nonumber\\
       &\ \geq \ \left|a\right|\alpha^m\left(\alpha^{\left| j(n;N)-n-k(N) \right|}-1\right),
    \end{align}
    and since $j(n;N)\in \bbn$ we have $j(n;N)-n-k(N) \not\in\bbz$. Similarly, since $\alpha>1$ we have $\alpha^{\left| j(n;N)-n-k(N)\right|}-1>0$. Lastly, $m\to\infty$ as $n\to\infty$ and thus for large enough $n$, the left hand side of \eqref{eqn:bound2} tends to infinity:
    \begin{equation*}
       \lim_{n\to\infty} \left|a\alpha^{j(n;N)}-\frac{1}{C(N)}\sum\limits_{i=0}^{N-1}a\alpha^{n+i}\right|\ \to\ \infty,
    \end{equation*}
which contradicts \eqref{bound}, implying that $k(N) \in\bbn$, and $j(n;N) = n + k(N)$.

\ \\
We now prove that
    $$k(N) \in \left[ \frac{N}{2}, N\right].$$

We begin by noting that
   \begin{equation}\label{integer_constant}
    \frac{\alpha^N-1}{C(N)(\alpha-1)}\ =\
    \alpha^{k(N)}\ \implies\ C(N) \ =\ \sum\limits_{i=0}^{N-1}\alpha^{i-k(N)}.
   \end{equation} Let $k(N) < \frac{N}2$, then
    \begin{equation*}
         C(N) \ =\ 1 + \sum\limits_{i=1}^{k(N)}\left( \alpha^i + \frac{1}{\alpha^i} \right) + \sum\limits_{i=2k(N) +1}^{N-1}\alpha^{i-k(N) }.
    \end{equation*}
    Note that the coefficient of the irrational part of $\sum\limits_{i=2k(N)+1}^{N-1}\alpha^{i-k(N)}$ is a positive integer. We now have
    \begin{equation}
        \alpha^i + \frac{1}{\alpha^i}\ =\ \alpha^i + (-\beta)^i.
    \end{equation}
    Applying the binomial theorem to the above-mentioned equation gives the coefficient of the irrational part in $\alpha^i + (-\beta)^i$ for $i>1$ to be
    \begin{equation*}
        \sum\limits_{j=1}^{\lfloor \frac{i-1}{2}\rfloor}(r^2+4)^{}(r^{i-2j+1} + (-r)^{i-2j+1}) \ \geq\ 0,
    \end{equation*}
    which implies that $C(N)$ is irrational, resulting in a contradiction. Therefore $k(N) \geq N/2$. \\ \\ Now, by induction we get the following inequality:
    \begin{equation*}
         \sum\limits_{i=0}^{n+N-1}f(i) \ <\ f(n+N+1),
    \end{equation*}
    which proves $k(N)\leq N$.

This implies that $$j(n;N) \in \left[n + \frac{N}2, n + N \right].$$
\end{proof}

\begin{thm}\label{thm:non_degenerate_general_4}
    Given a non-degenerate sequence of non-negative integers recursively defined by
\[f(n)\ :=\ rf(n-1)+f(n-2), \]
where $r\in\bbn$, if
\begin{equation*}
   \sum\limits_{i=0}^{3}f(n+i)\ =\ C(N) \cdot f(j(n;N))
\end{equation*}
then $r=2$.
\end{thm}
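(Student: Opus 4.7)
The plan is to combine Theorem \ref{linear relations} with an explicit Binet-formula computation. Applied with $N=4$, Theorem \ref{linear relations} forces $j(n;N) = n+k$ for some fixed integer $k \in \{2,3,4\}$. My strategy is then to rule out $k\in\{3,4\}$ outright and to show that $k=2$ forces $r=2$.

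Write $f(n) = a\alpha^n + b\beta^n$, where $\alpha = (r+\sqrt{r^2+4})/2$ and $\beta = (r-\sqrt{r^2+4})/2$ are the roots of $x^2 - rx - 1 = 0$. Non-degeneracy together with integrality of the $f(n)$ forces $b\neq 0$: otherwise $\alpha = f(n+1)/f(n)$ would be rational, but $r^2+4$ is never a perfect square for $r\in\bbn$ with $r\geq 1$ (the factorisation $(m-r)(m+r)=4$ has no positive integer solution with $r\geq 1$). Substituting into the hypothesis and equating the $\alpha^n$ and $\beta^n$ contributions separately yields
\[
C(N) \ = \ \sum_{i=0}^{3} \alpha^{\,i-k} \ = \ \sum_{i=0}^{3} \beta^{\,i-k}.
\]

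The remaining task is to simplify this expression in each case and impose $C(N)\in\bbz$. Using $\alpha\beta = -1$ (so $\alpha^{-1} = -\beta$) together with $\beta^2 = r\beta+1$, $\beta^3 = (r^2+1)\beta + r$, and $\beta^4 = (r^3+2r)\beta + (r^2+1)$, I would rewrite $C(N)$ as $P(r) + Q(r)\beta$ with $P,Q\in\bbz[r]$. Since the irrational part of $\beta$ is $-\sqrt{r^2+4}/2$, integrality of $C(N)$ forces $Q(r)=0$. A direct computation should give $Q(r)=r-2$ for $k=2$ (so $r=2$ and $C(N)=4$, matching Theorem~\ref{thm:sum-4N-pell} at $N=1$); $Q(r) = -(r^2 - r + 2)$ for $k=3$ (discriminant $-7$, so no real solution); and $Q(r) = r^3 - r^2 + 3r - 2$ for $k=4$ (by the rational root theorem any integer root divides $2$, yet substituting $r=1$ yields $1$ and $r=2$ yields $8$). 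Hence $r=2$ is the only surviving possibility.

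The main obstacle is handling the $k=4$ case carefully: reducing $\beta^4$ introduces a cubic in $r$, and one must track the coefficient of $\beta$ across four terms before the rational root test can dispatch it; a secondary technicality is the justification that both $a$ and $b$ are nonzero, which is what legitimises comparing the $\alpha^n$ and $\beta^n$ coefficients in the first place.
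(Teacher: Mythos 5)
Your proposal is correct and follows essentially the same route as the paper's proof in Appendix A.3: restrict $k(N)\in\{2,3,4\}$ via Theorem \ref{linear relations}, expand $C(N)=\sum_{i=0}^{3}\alpha^{i-k}$, and force the coefficient of the irrational part to vanish, which is possible only in the $k=2$ case with $r=2$. Your reduction via $\alpha^{-1}=-\beta$ and $\beta^2=r\beta+1$ is merely tidier bookkeeping than the paper's direct surd expansion; incidentally, your $k=4$ coefficient $r^3-r^2+3r-2$ is the correct one (the paper's constant term in $-r^3+r^2-3r+1$ is a small arithmetic slip for $+2$), and since neither polynomial has a positive integer root the conclusion is unaffected.
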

\begin{proof}For a proof, see Appendix \ref{subsec:proof_specialized_linear_relations}.
\end{proof}

Clearly, only looking at rational multiples of terms in the sequence is sufficient, because the desired multiple can be written as a ratio of integers. The following theorem proves why only looking at \emph{integer} multiples of terms in the sequence is sufficient.

\begin{thm}
    Define $\{f(n)\}$ by the recurrence relation
    \begin{equation*}
        f(n)\ :=\ rf(n-1)+f(n-2)
    \end{equation*} where $r\in\bbn$, and choose initial conditions so that $f$ is not identically zero. Then if the sum of $N>1$ consecutive terms of $\{f(n)\}$ is a fixed rational constant times another term in the sequence, then the rational constant is an integer.
\end{thm}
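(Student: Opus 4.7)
The plan is to write $C=p/q$ in lowest terms with $q>0$, show that $q$ must divide $f(j(n;N))$ for all large $n$, and then rule out $q>1$ by passing to a ``primitive'' version of the sequence and using coprimality of consecutive terms.

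First, I would observe that the index $j(n;N)$ must have the form $j(n;N)=n+k(N)$ for some integer $k(N)\in[N/2,N]$. This is exactly the conclusion of Theorem \ref{linear relations}, and although that theorem is stated for integer $C(N)$, a careful look at its proof shows that only two properties of $C(N)$ are actually used: that it is nonzero, and that it is a fixed real constant independent of $n$. In particular, the contradiction derived from \eqref{bound} versus \eqref{eqn:bound2} goes through verbatim with $C(N)$ rational; the argument that the fractional part of $j(n;N)-n-k(N)$ is bounded away from $0$ when $k(N)\notin\mathbb{Z}$ does not use integrality of the multiplier. Thus the sum identity takes the concrete form
\[\sum_{i=0}^{N-1} f(n+i)\ =\ \frac{p}{q}\, f(n+k(N))\qquad \text{for all sufficiently large } n.\]

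Next I would reduce to the coprime case. Let $d:=\gcd(f(0),f(1))$; an easy induction on the recurrence $f(n)=rf(n-1)+f(n-2)$ shows $d\mid f(n)$ for every $n$, so $g(n):=f(n)/d$ is a non-degenerate integer sequence satisfying the same recurrence with $\gcd(g(0),g(1))=1$. Dividing the displayed identity by $d$ yields the same identity for $g$ with the same constant $C=p/q$. Clearing denominators gives $q\sum_{i=0}^{N-1}g(n+i)=p\,g(n+k(N))$, and since $\gcd(p,q)=1$ we conclude $q\mid g(n+k(N))$ for all sufficiently large $n$.

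Finally, the standard Euclidean argument applied to the recurrence gives $\gcd(g(m),g(m+1))=\gcd(g(m-1),g(m))$ and hence $\gcd(g(m),g(m+1))=\gcd(g(0),g(1))=1$ for every $m\ge 0$. Applying the divisibility conclusion above with two consecutive large values of $n$ (noting that $g$ is eventually strictly positive, so these values are genuinely nonzero), $q$ divides both $g(n+k(N))$ and $g(n+k(N)+1)$; coprimality forces $q=1$, i.e., $C\in\mathbb{Z}$. The main obstacle is the first step: being confident that the argument of Theorem \ref{linear relations} really does not need integrality of $C$, so that $j(n;N)$ is pinned down to a linear shift of $n$ by an integer. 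Everything after that is just the classical observation that consecutive terms of a linear recurrence are coprime once the initial gcd is divided out.
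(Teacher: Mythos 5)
Your proposal is correct and follows essentially the same route as the paper: divide out $d=\gcd(f(0),f(1))$, invoke Theorem \ref{linear relations} to pin down $j(n;N)=n+k(N)$, deduce that the denominator divides $h(n+k(N))$ for all large $n$, and conclude from coprimality of consecutive terms. Your explicit check that the proof of Theorem \ref{linear relations} only needs $C(N)$ to be a fixed nonzero constant (not an integer) addresses a point the paper's own proof silently assumes, so this is a welcome refinement rather than a divergence.
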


\begin{proof}
  Let $d=\gcd(f(0),f(1))$. We notice that $d\mid f(n)$  for all $n \in \mathbb{N}$ and therefore consider the equivalent sequence $h(n) := f(n)/d$ instead.
  Then $$\gcd(h(n),h(n+1))\ =\ 1$$  for all $n\geq 1$ since
  \begin{align*}  \gcd(h(n),h(n+1))\ &=\ \gcd(h(n),rh(n)+h(n-1))\\
  &=\ \gcd(h(n),h(n-1))\\
  &\qquad \vdots\\
  &=\ \gcd(h(0),h(1)).\\
  \end{align*}
  Now suppose
  \begin{align}\label{eqn:rational-mult-1}
    \sum\limits_{i=0}^{N-1}f(n+i)\ =\ \frac{a}{b}f(j(n;N)),
  \end{align}
  where $a,b\in\bbz$ and $\gcd(a,b)=1$. Dividing both sides by $d$, we get
  \begin{align}\label{eqn:rational-mult-2}
    \sum\limits_{i=0}^{N-1}h(n+i)\ =\ \frac{a}{b}h(j(n;N)).
  \end{align}
  From Theorem \ref{linear relations} we know that there exists $M\in\bbn$ such that $j(n;N)=n+k(N)$ for $n>M$. Applying \eqref{eqn:rational-mult-1} tells us that $b\mid h(n+k(N)) \ \mbox{for all } n>M$. However, if $b>1$ we reach the contradiction that $\gcd(f(n),f(n+1)) \neq 1 $ for all $ n \in \bbn$. Therefore $b=1$, which completes our proof.
  \end{proof}

\section{Pell numbers}

In Theorem \ref{thm:sum-4N-pell}, we proved that the sum of $4N$ consecutive Pell numbers is a constant integer multiple of the $(2N+1)$\textsuperscript{st} term. We generalize to other related partial sums.

\subsection{Sum of $4N+2$ Consecutive Terms}

\begin{thm}\label{thm:pell-(4N+2)}
Let $P(n)$ denote the $n$\textsuperscript{${\rm th}$} Pell number. Fix any integer $N > 0$. There is no integer $C(N)$ such that for every $n$ there exists an integer index $j(n;N)$ such that the following equation holds: $$\sum_{i=0}^{2N} P(n+i)\ =\ C(N) P(j(n;N)).$$


\end{thm}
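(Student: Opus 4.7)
The plan is to use Binet's formula together with Theorem~\ref{linear relations} to reduce the hypothesized identity to a Diophantine condition on the Pell--Lucas sequence $Q(k)=a^k+b^k$, and then rule out every admissible index $k$.

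First I would establish, by summing the geometric series and using $a-1=\sqrt{2}$, $b-1=-\sqrt{2}$, and $ab=-1$, the general identity
\begin{equation*}
\sum_{i=0}^{M-1} P(n+i)\ =\ \frac{Q(n+M)-Q(n)}{4},
\end{equation*}
valid for every $M\geq 1$. Specializing to $M=2N+1$, the supposed relation becomes
\begin{equation*}
Q(n+M)-Q(n)\ =\ 2\sqrt{2}\,C(N)\,P(n+k),
\end{equation*}
where Theorem~\ref{linear relations} lets me write $j(n;N)=n+k$ for some fixed integer $k$ in the range $(2N+1)/2\leq k\leq 2N+1$.

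Next, I would substitute Binet's formula on both sides. Because the solution space of $x_{n+2}=2x_{n+1}+x_n$ is two-dimensional with basis $\{a^n,b^n\}$, equating the coefficients of $a^n$ and $b^n$ is legitimate and gives
\begin{equation*}
a^M-1\ =\ C(N)\sqrt{2}\,a^k,\qquad b^M-1\ =\ -C(N)\sqrt{2}\,b^k.
\end{equation*}
Eliminating $C(N)$ between these two equations and using $ab=-1$ collapses the problem to the single requirement
\begin{equation*}
(-1)^k Q(M-k)\ =\ Q(k).
\end{equation*}

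Finally, I would dispatch this by parity. If $k$ is odd, the left side is negative while the right side is positive (since $M-k>0$ and $Q$ takes positive values on non-negative indices), a contradiction. If $k$ is even, the equation $Q(M-k)=Q(k)$ combined with the strict monotonicity of $Q$ on $\mathbb{N}$ forces $M-k=k$, so $k=M/2=N+\tfrac12\notin\mathbb{Z}$, again a contradiction. Hence no admissible $k$ exists, completing the argument.

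The main obstacle I anticipate is rigorously justifying the coefficient-matching step, since the hypothesized identity is an equality of integers for each $n$ rather than a polynomial identity in $a,b$. I would address this by observing that two solutions of the recurrence that agree on two consecutive indices coincide everywhere, so evaluating the hypothesized relation at any two values of $n$ produces a $2\times 2$ system whose Vandermonde-type determinant is nonzero because $a\neq b$. Exactly the same plan, with $M=4N+2$, handles the sum-length-$4N+2$ case suggested by the subsection title.
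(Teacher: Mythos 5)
Your argument is correct, but it is genuinely different from the paper's. The paper proves this statement (it appears again as Theorem \ref{thm:pell-odd}) by a congruence argument: reducing the Pell sequence modulo $C(N)$, using the determinant of the $2\times 2$ generating matrix to show the Pisano period $\pi(C(N))$ is even for $C(N)>2$, observing that the hypothesis forces $\pi(C(N))$ to divide the odd number $2N+1$, hence $C(N)=1$, and finally squeezing the sum strictly between $P(n+2N)$ and $P(n+2N+1)$. You instead work exactly in $\mathbb{Z}[\sqrt{2}]$: the identity $\sum_{i=0}^{M-1}P(n+i)=\tfrac14(Q(n+M)-Q(n))$ is correct, the coefficient matching in the basis $\{a^n,b^n\}$ is legitimate for the reason you give (two consecutive large $n$ yield a nonsingular Vandermonde system, or equivalently $|b/a|<1$ forces both coefficients to vanish), and eliminating $C(N)$ does collapse everything to $(-1)^kQ(M-k)=Q(k)$, which the parity/monotonicity argument kills. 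Your route buys uniformity: the single criterion $(-1)^kQ(M-k)=Q(k)$ with $k\in[M/2,M]$ simultaneously disposes of $M=2N+1$ and $M=4N+2$ and explains why $M=4N$ succeeds (take $k=2N$, recovering $C=2P(2N)$), whereas the paper needs separate arguments for the odd and $4N+2$ cases; the cost is that you must first invoke Theorem \ref{linear relations} to pin $j(n;N)=n+k$ with $k\le M$, which the paper's Pisano-period proof avoids entirely. Two small repairs: the displayed constant should be $Q(n+M)-Q(n)=4C(N)P(n+k)$ rather than $2\sqrt{2}\,C(N)P(n+k)$ (your subsequent coefficient equations $a^M-1=\sqrt{2}C(N)a^k$, $b^M-1=-\sqrt{2}C(N)b^k$ are nonetheless the right ones); and $Q$ is not strictly increasing at the start since $Q(0)=Q(1)=2$, so in the even-$k$ case you should note that $Q(M-k)=Q(k)$ with $M-k\ne k$ is possible only for $\{M-k,k\}=\{0,1\}$, i.e.\ $M=1$, which is excluded since $M=2N+1\ge 3$.
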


We prove this theorem in greater generality. Consider sequences  $f(n)$ satisfying the recurrence relation
    \begin{equation*}
    f(n)\ :=\ rf(n-1)+f(n-2),
\end{equation*} such that $f(0)=0$ and $f(1)=1$ and $r \geq 2$.

\begin{thm} Let $f$ be as above, and fix any integer $N > 0$. There is no integer $C(N)$ such that for every $n$ there exists an integer index $\, j(n;N)\,$ such that the following equation holds: $$\sum_{i=0}^{4N+1} f(n+i)\ =\ C(N) f(\, j(n;N)\,).$$
\end{thm}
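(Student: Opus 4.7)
The plan is to derive a compact closed form for the sum via Binet's formula, invoke Theorem \ref{linear relations} to reduce the question to a fixed-shift identity, and then rule out all possible integer shifts by a linear independence argument on $\alpha^n$ and $\beta^n$.

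First I would write $f(n) = (\alpha^n - \beta^n)/(\alpha - \beta)$ with $\alpha,\beta = (r\pm\sqrt{r^2+4})/2$, noting the crucial identity $\alpha\beta = -1$ together with $(\alpha-1)(\beta-1) = -r$. Because $2N+1$ is odd, $\alpha^{-(2N+1)} = (-\beta)^{2N+1} = -\beta^{2N+1}$, which factors $\alpha^{4N+2}-1 = \alpha^{2N+1}(\alpha^{2N+1}+\beta^{2N+1})$, and likewise for $\beta$. Setting $g(n) := \alpha^n + \beta^n$ and summing the two geometric series, a direct computation collapses to the closed form
\begin{equation*}
\sum_{i=0}^{4N+1} f(n+i) \;=\; \frac{g(2N+1)}{r}\bigl(f(n+2N) + f(n+2N+1)\bigr).
\end{equation*}

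Next, assume for contradiction that such an integer $C(N)$ and indices $j(n;N)$ exist. By Theorem \ref{linear relations} applied with $4N+2$ terms, we must have $j(n;N) = n+k$ for a fixed integer $k$, so after dividing we are asking whether there is an integer $k$ and a scalar $D$ with $f(n+2N)+f(n+2N+1) = D\, f(n+k)$ for every sufficiently large $n$. Expanding both sides by Binet and equating the (linearly independent, since $|\alpha|>1>|\beta|$) coefficients of $\alpha^n$ and $\beta^n$ forces
\begin{equation*}
\alpha^{2N-k}(1+\alpha) \;=\; \beta^{2N-k}(1+\beta).
\end{equation*}
Setting $m := 2N-k$ and dividing by $\alpha-\beta$, this is equivalent to $f(m) + f(m+1) = 0$.

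Finally I would show that $f(m) + f(m+1) \neq 0$ for every integer $m$ whenever $r \geq 2$. For $m \geq 0$ this is immediate since $f(m) \geq 0$ and $f(m+1) \geq 1$. For $m \leq -1$, the identity $f(-n) = (-1)^{n+1}f(n)$ (an easy consequence of $\alpha\beta = -1$) reduces the equation, up to sign, to $f(n+1) - f(n) = 0$ for some $n \geq 0$; but $f(n+1) - f(n) = (r-1)f(n) + f(n-1) \geq 1$ when $r \geq 2$, a contradiction. This rules out every integer $k$ and thus proves the theorem.

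The main technical obstacle is the algebraic manipulation in the first step, namely spotting the factor $g(2N+1)/r$ and collapsing $4N+2$ terms down to just two consecutive $f$-values; after that, the linear independence step and the nonvanishing of $f(m)+f(m+1)$ are essentially routine bookkeeping.
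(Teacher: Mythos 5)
Your proposal is correct, and its first half coincides with the paper's: both reduce the sum of $4N+2$ consecutive terms to the closed form $\frac{g(2N+1)}{r}\bigl(f(n+2N)+f(n+2N+1)\bigr)$ (you via Binet and the factorization $\alpha^{4N+2}-1=\alpha^{2N+1}(\alpha^{2N+1}+\beta^{2N+1})$, the paper via the companion identity $f(n+k)+(-1)^kf(n-k)=g(k)f(n)$ and a telescoped partial-sum formula). The endgames are genuinely different. The paper never fixes the shift: it compares the sums starting at $n$ and at $n+1$, shows via the growth estimate $r<T_{m+1}/T_m\le 3r/2$ versus $f(t_2+2)/f(t_2)>r^2+1$ that the two target indices must be consecutive, and then derives a contradiction from $\gcd(f(t_2),f(t_2+1))=1$ together with a size argument forcing the residual multiplier to exceed $1$. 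You instead invoke Theorem \ref{linear relations} to pin down $j(n;N)=n+k$, compare coefficients of $\alpha^n$ and $\beta^n$ (valid since the identity holds for all large $n$ and $|\beta|<1<|\alpha|$), and reduce everything to the single Diophantine condition $f(m)+f(m+1)=0$ with $m=2N-k\in\mathbb{Z}$, which you correctly rule out for $r\ge 2$ using $f(-n)=(-1)^{n+1}f(n)$. Your route is shorter once Theorem \ref{linear relations} is granted, and it has the pleasant feature of explaining exactly why $r=1$ is exceptional: for the Fibonacci numbers $F(-2)+F(-1)=0$ gives $m=-2$, $k=2N+2$, recovering Theorem \ref{thm:fibonacci-(4N+2)}, whereas the paper only remarks on this after the fact. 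The trade-off is dependence on Theorem \ref{linear relations}, which the paper's self-contained gcd/growth argument avoids. One small point to tidy: in the negative-index case your inequality $f(n+1)-f(n)=(r-1)f(n)+f(n-1)\ge 1$ should be checked separately at the boundary index (where it reduces to $f(1)-f(0)=1$), but this is cosmetic.
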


\begin{proof}
Define the sequence $g(n)$ by $g(0)=2$, $g(1)=r$ and
\begin{equation}
    g(n)\ :=\ rg(n-1)+g(n-2).
\end{equation}
Using induction on $k$, we find
\begin{equation}
f(n+k)+(-1)^{k}f(n-k)\ =\ g(k)f(n).
\end{equation}

Using \eqref{gseqsum} we get
\begin{equation}
\sum\limits_{k=0}^{n}f(n)\ =\ \frac{1}{r}\Big(f(n) + f(n+1) - 1\Big).
\end{equation}
Therefore, we have
\begin{equation*}
\begin{split}
\sum\limits_{k=0}^{4N+1}f(n+k)\ &=\ \sum\limits_{k=0}^{n+4N+1}f(k)-\sum\limits_{k=0}^{n-1}f(k)\\
&=\ \frac{1}{r}\Big[f(n+4N+1)+f(n+4N+2)-f(n-1)+f(n)\Big]\\
&=\ \frac{1}{r}\Big[g(2N+1)f(n+2N+1)+g(2N+1)f(n+2N)\Big]\\
&=\ g(2N+1)\frac{f(n+2N+1)+f(n+2N)}{r}.
\end{split}
\end{equation*}
Further by induction on $N$ we know that $\dfrac{g(2N+1)}r\in\bbn$.\\ \

Now suppose the sum of $4N+2$ terms is a fixed multiple of another term. Then for some $t_1\geq t_2\in \mathbb{N}$, the following equations hold:
\begin{eqnarray}
sf(t_1) & \ =\ & g(2N+1)\frac{f(n+2N+2)+f(n+2N+1)}{r} \nonumber\\
sf(t_2) & \ =\ & g(2N+1)\frac{f(n+2N+1)+f(n+2N)}{r}.
\end{eqnarray}
Dividing both sides yields
\begin{equation}
\frac{f(n+2N+1)+f(n+2N+2)}{f(n+2N)+f(n+2N+1)}\ =\ \frac{f(t_1)}{f(t_2)}.
\end{equation}
Now for $m$ a positive integer let
\begin{equation*}
    T_m\ :=\ f(m)+f(m-1).
\end{equation*} Then $T_m$ satisfies the following recurrence:
\begin{equation*}
\begin{split}
rT_{m-1}+T_{m-2}\ &=\ rf(m-1)+rf(m-2)+f(m-2)+f(m-3)\\
&=\ f(m)+f(m-1)\\
&=\ T_m.
\end{split}
\end{equation*}
Using this recursion and induction it follows that
\begin{equation}
r\ <\ \frac{T_{m+1}}{T_m}\ \leq\ \frac{3r}{2},
\end{equation} which implies that $t_1>t_2$. If $t_1\geq t_2+2$ then
\begin{equation}
\begin{split}
    \frac{f(t_2+2)}{f(t_2)}\ &=\ \frac{rf(t_2+1)+f(t_2)}{f(t_2)}\\
    &=\ \frac{r^2f(t_2)+rf(t_2-1)}{f(t_2)}+1\\
    &>\ r^2+1,
\end{split}
\end{equation} which leads to a contradiction as $r\geq 2\implies r^2> 3r/2$. Thus we must have $t_1=t_2+1$.
\\ \\ Now we know that
\begin{equation}
\begin{split}
f(n+2N+2)\ &<\ f_{n+2N+1}+f_{n+2N+2}\\
&=\ \frac{rsf_{t_1}}{g_{2N+1}} \ <\ f_{n+2N+3},
\end{split}
\end{equation} and therefore $c=rs / g_{2N+1}$ cannot possibly equal $1$. Lastly, we note that \begin{equation}
    \begin{split}
    \gcd(T_{m+1},T_m)\ &=\ \gcd(rT_m+T_{m-1},T_{m})\\
    &=\ \gcd(T_m,T_{m-1}).
    \end{split}
\end{equation}
Applying induction proves that this $\gcd$ is 1. The same argument shows that $\gcd(f_{m+1},f_m)=1$, but this contradicts the following statements:
\begin{align}
  sf(t_2+1)\ &=\ \frac{g(2N+1)}{r}T_{n+2N+2},\\
  sf(t_2)\ &=\ \frac{g(2N+1)}{r}T_{n+2N+1} \\
  \text{ and } c >1,
\end{align}
which completes our proof.
\end{proof}

\emph{Note that in the above proof the result does not hold for $r=1$, which is the Fibonacci sequence. We will prove that the sum of $4N+2$ consecutive Fibonacci numbers is a fixed integer multiple of another Fibonacci number (see Theorem \ref{thm:fibonacci-(4N+2)}).}

\subsection{Sums of Odd Numbers of Consecutive Terms}\label{OddPell}

\begin{thm}\label{thm:pell-odd}
 Let $P(n)$ denote the $n$\textsuperscript{${\rm th}$} Pell number. Fix any integer $N > 0$. There is no integer $C(N)$ such that for every $n$ there exists an integer index $j(n;N)$ such that the following equation holds:
 $$\sum_{i=0}^{2N} P(n+i)\ =\ C(N)P(j(n;N)).$$
\end{thm}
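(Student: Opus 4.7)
The plan is to apply Theorem \ref{linear relations} to pin the index down to the form $n+k$ for a single integer $k$, expand both sides by the Binet formula $P(m) = (a^m - b^m)/(2\sqrt{2})$ (with $a = 1+\sqrt{2}$, $b = 1-\sqrt{2}$, $ab = -1$), and then, using the linear independence of $\{a^n\}$ and $\{b^n\}$, reduce the hypothetical identity to a closed-form equation in the Pell-Lucas sequence $Q$, which is ruled out by a short parity/monotonicity argument.

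Suppose for contradiction that such $C = C(N)$ and $j(n;N)$ exist. Theorem \ref{linear relations}, applied to the Pell recurrence (the $r = 2$ case) with $N' = 2N+1$ consecutive terms, gives $j(n;N) = n + k$ for some integer $k \in [(2N+1)/2,\, 2N+1]$; hence $k \in \{N+1, N+2, \ldots, 2N+1\}$, and the complementary index $2N+1-k$ lies in $\{0, 1, \ldots, N\}$.

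Next, summing the geometric series (and using $a-1 = \sqrt{2}$, $b-1 = -\sqrt{2}$) yields
\[
\sum_{i=0}^{2N} P(n+i) \ =\ \frac{1}{4}\Bigl[\, a^n(a^{2N+1}-1) + b^n(b^{2N+1}-1) \,\Bigr],
\]
while $C \cdot P(n+k) = C\,(a^{n+k} - b^{n+k})/(2\sqrt{2})$. Because $|a| > 1 > |b|$, the sequences $\{a^n\}$ and $\{b^n\}$ are linearly independent, so equating the coefficients of $a^n$ and $b^n$ in the hypothetical identity gives
\[
a^{2N+1} - 1 \ =\ \sqrt{2}\,C\,a^k \qquad \text{and} \qquad 1 - b^{2N+1} \ =\ \sqrt{2}\,C\,b^k.
\]
Eliminating $C$ and using $ab = -1$ (so that $a^{2N+1} b^k = (-1)^k a^{2N+1-k}$ and $a^k b^{2N+1} = (-1)^k b^{2N+1-k}$) collapses these two conditions to the single identity
\[
(-1)^k\, Q(2N+1-k) \ =\ Q(k).
\]

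Finally, since $Q(m) > 0$ for every $m \geq 0$, matching signs forces $k$ to be even; then $2N+1-k$ is odd, so $k \neq 2N+1-k$. Using that $Q$ is strictly increasing on $m \geq 1$ with the sole coincidence $Q(0) = Q(1) = 2$, the equality $Q(k) = Q(2N+1-k)$ would force $\{k, 2N+1-k\} = \{0,1\}$, and hence $N = 0$, contradicting $N > 0$. The main obstacle is producing the clean identity $(-1)^k Q(2N+1-k) = Q(k)$ from the two Binet-coefficient equations; once this is in hand, parity together with the monotonicity of $Q$ finishes the argument immediately.
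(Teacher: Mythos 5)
Your proof is correct, but it takes a genuinely different route from the paper's. The paper argues arithmetically: it reduces the Pell sequence modulo $C(N)$, uses the determinant of the generating matrix to show the Pisano period $\pi(C(N))$ is even for every $C(N)\ge 2$, observes that the hypothesis forces $\pi(C(N))$ to divide the odd number $2N+1$, concludes $C(N)=1$, and then kills that case with the sandwich estimate $P(n+2N) < \sum_{i=0}^{2N}P(n+i) < P(n+2N+1)$. You instead invoke Theorem \ref{linear relations} to pin the index to $n+k$ with $k\in\{N+1,\dots,2N+1\}$, expand via Binet, and use linear independence of $\{a^n\}$ and $\{b^n\}$ to get $a^{2N+1}-1=\sqrt{2}\,C\,a^k$ and $1-b^{2N+1}=\sqrt{2}\,C\,b^k$; dividing and using $ab=-1$ does indeed yield $(-1)^kQ(2N+1-k)=Q(k)$ (cross-multiplying gives $a^{2N+1}b^k+a^kb^{2N+1}=a^k+b^k$, and $a^{2N+1}b^k=(-1)^ka^{2N+1-k}$, etc.), after which positivity of $Q$ forces $k$ even and the near-injectivity of $Q$ (strictly increasing for $m\ge 1$, with only $Q(0)=Q(1)$) forces $N=0$. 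Each step checks out, including the elimination you flagged as the main obstacle. What each approach buys: the paper's is self-contained and elementary (no appeal to Theorem \ref{linear relations}, whose conclusion only holds for sufficiently large $n$ — enough here, but worth stating), and it transfers directly to the generalized Pell setting used later in the paper; yours gives more structural information, identifying the unique candidate index and the would-be constant $C=P(2N+1-k)+(-1)^kP(k)$, and runs parallel to the paper's Binet-style treatments of the $4N$ and $4N+2$ cases.
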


\begin{proof}
Suppose that we can write the sum of any $N$ consecutive Pell numbers as $C(N)$ times a Pell number for some positive integer $C(N)$ where $N$ is odd. Consider the Pell sequence modulo $C(N)$. By using the Pigeonhole Principle and the fact that two consecutive Pell numbers uniquely determine the terms before and after them we see that $\{ P_{n,C(N)}\}_{n\geq 0}:=\{P(n)\ \mbox{mod }C(N)\}_{n\geq 0}$ is periodic. The period is called the \emph{Pisano Period} and is denoted by $\pi(C(N))$.
\newline

Notice that $\pi(1)=1$ and $\pi(2)=2$. Now consider $C(N) > 2$. Since $\{ P_{n,C(N)}\}_{n\geq 0}$ is not a constant sequence, therefore $\pi(C(N)) \geq 2$, which implies that $$P_{c+\pi (C(N)),C(N)}=P_{c,C(N)}.$$
\\
Now since
\begin{align}
\begin{pmatrix} 2 &1 \\1& 0 \end{pmatrix}^{\pi(C(N))+1}\ =\ \begin{pmatrix} P(\pi(C(N))+2) &P(\pi(C(N))+1)\\P(\pi(C(N))+1)& P(\pi(C(N)))\end{pmatrix},
\end{align}
this implies that, with $I_2$ the $2\times 2$ identity matrix,
\begin{align}
\begin{pmatrix} 2 &1\\1& 0 \end{pmatrix}^{\pi (C(N))}\ =\ I_2 \in  GL_{2}\left( \mathbb{Z}/ \, C(N) \, \mathbb{Z}\right).
\end{align}
Taking the determinant, we get $(-1)^{\pi (C(N))}=1,$ which implies that $\pi(C(N))$ is even for all $C(N) > 2$.
\newline

Now let $N$ be an odd number, to emphasize this we change notation and write it as $2N+1$. Suppose the sum of any $2N+1$ consecutive Pell numbers is $C(N)$ times another Pell Number. Then
\begin{equation*}
    \sum\limits_{i=0}^{2N+1-1}P(n+i)\ \equiv\ 0 \pmod{C(N)},\mbox{ for } n\geq 0.
\end{equation*}
Replacing $n$ by $n+1$ we get
\begin{equation*}
    P(n + 2N+1)\ \equiv\ P(n) \pmod{C(N)}\; \mbox{ for all } n\geq 0,
\end{equation*} which implies that $\pi (C(N))\mid N$. However, since $\pi (C(N))$ is even when $C(N) \geq 2$, this implies that $C(N) = 1$. Thus the sum of any $2N+1$ consecutive Pell numbers must be equal to a Pell number. In other words,
\begin{equation*}
     \sum\limits_{i=0}^{2N+1-1}P(n+i)\ =\ P(j(n;N))
\end{equation*} for some integer $j(n;N)$ and $n\geq 0$. Notice that when $N = 0$, we obtain $j(n;N)=n$. Now suppose $2N+1$ is an odd integer greater than $1$. Then, we have
\begin{equation*}
    \sum\limits_{i=0}^{2N+1 - 1}P(n+i)\ >\ P(n+2N+1-1).
\end{equation*}
However,
\begin{equation*}
    \sum\limits_{i=0}^{2N+1-1}P(n+i) \ <\  \sum\limits_{i=0}^{n+2N+1-1}P(i)\ <\ P(n+2N+1)
\end{equation*} where the last inequality can be proven by using induction based on the value of $n+2N+1$. We conclude that $$P(n+2N+1-1) < \sum\limits_{i=0}^{2N+1-1}P(n+i) < P(n+2N+1),$$ and hence the sum of $2N+1$ consecutive Pell numbers is a fixed integer multiple of a Pell Number if and only if $N=0$.
\end{proof}

\emph{We use a proof of a similar flavor in Section \ref{gen_fib_seq}}.

\section{Fibonacci Numbers}

We now prove similar results for the Fibonacci numbers. In particular, we show that the sum of $N$ consecutive Fibonacci numbers is equal to a fixed constant multiple of a Fibonacci number if and only if $N\equiv 2\pmod{4}$, $N=3$, or $N=1$.

\subsection{Sum of $4N+2$ Consecutive Terms}

\begin{thm}\label{thm:fibonacci-(4N+2)}
Let $F(n)$ denote the $n$\textsuperscript{${\rm th}$} Fibonacci number, and $L(n)$ denote the $n$\textsuperscript{${\rm th}$} Lucas number. Fix any $N > 0$. The following equation
$$ \sum\limits_{i = 0}^{4N+1} F(n+i) \ =\ L(2N+1) F(n + 2N + 2),$$
holds for all $n$.
\end{thm}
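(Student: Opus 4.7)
The plan is to reduce the identity to a standard Fibonacci--Lucas relation via Binet's formula.

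First, I would apply Binet's formula \eqref{eqn: Binnet} termwise and evaluate the two resulting geometric sums:
\[\sum_{i=0}^{4N+1} F(n+i) \;=\; \frac{\varphi^n}{\sqrt{5}} \cdot \frac{\varphi^{4N+2}-1}{\varphi-1} \;-\; \frac{\psi^n}{\sqrt{5}} \cdot \frac{\psi^{4N+2}-1}{\psi-1}.\]
Using the relations $\varphi - 1 = -\psi$, $\psi - 1 = -\varphi$, and $\varphi\psi=-1$ (all consequences of $\varphi + \psi = 1$ and $\varphi\psi = -1$), this expression should collapse to
\[\sum_{i=0}^{4N+1} F(n+i) \;=\; F(n+4N+3) - F(n+1),\]
which is essentially the standard partial-sum identity $\sum_{i=0}^{m}F(i) = F(m+2)-1$ applied to the window $[n,n+4N+1]$.

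Second, I would invoke the Fibonacci--Lucas identity
\[F(m+k) + (-1)^k F(m-k) \;=\; F(m)\,L(k),\]
which itself follows from Binet (expand, pull out $\varphi^m$ and $\psi^m$, use $(-1)^k = (\varphi\psi)^k$, and recognize $\varphi^k + \psi^k = L(k)$). Substituting $m = n + 2N + 2$ and $k = 2N+1$---so that $(-1)^k = -1$, $m+k = n+4N+3$, and $m-k = n+1$---yields
\[F(n+4N+3) - F(n+1) \;=\; L(2N+1)\,F(n+2N+2),\]
and combining this with the first step gives the claim.

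There is no substantive obstacle here: both ingredients are routine Binet manipulations, and the content of the argument is really just choosing $m$ and $k$ so that the Fibonacci--Lucas identity matches the two endpoints $n+1$ and $n+4N+3$ produced by telescoping. If one prefers to build directly on material already in the paper, an alternative is to start from Lemma \ref{lem:fib-(4N)}, append the two extra terms $F(n+4N) + F(n+4N+1) = F(n+4N+2)$, and then verify $F(2N)L(n+2N+1) + F(n+4N+2) = L(2N+1)F(n+2N+2)$ by another short Binet check; but the symmetric approach above is cleaner and mirrors the proof of Theorem \ref{thm:sum-4N-pell}.
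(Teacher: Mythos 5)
Your proposal is correct and follows essentially the same route as the paper: telescope the sum to $F(n+4N+3)-F(n+1)$ and then apply the identity $F(m+k)+(-1)^kF(m-k)=L(k)F(m)$ with $m=n+2N+2$, $k=2N+1$. The only cosmetic difference is that you derive both ingredients from Binet's formula, whereas the paper obtains the Fibonacci--Lucas identity by a short induction.
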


\begin{proof}
A straightforward induction yields $$F(n+k)+(-1)^{k}F(n-k)\ =\ L(k)F(n),$$ and therefore we get
\begin{equation}
\begin{split}
\sum\limits_{i=0}^{4N+1}F(n+i)\ &=\ F(n+4N+3)-F(n+1)\\
&=\ L(2N+1)F(n+2N+2),\\
\end{split}
\end{equation}
which completes the proof.
\end{proof}

\subsection{Sum of $4N$ Consecutive Terms}
\begin{thm}\label{thm:fibonacci-(4N)}
Let $F(n)$ denote the $n$\textsuperscript{${\rm th}$} Fibonacci number. Fix any integer $N > 0$. There is no integer $C(N)$ such that for every $n$ there exists an integer index $j(n;N)$ such that the following equation holds:
 $$\sum_{i=0}^{4N-1} F(n+i)\ =\ C(N)F(j(n;N)).$$


\end{thm}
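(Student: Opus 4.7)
The plan is to combine the closed form for the sum given by Lemma \ref{lem:fib-(4N)} with the structural result of Theorem \ref{linear relations} to pin down the index shift, and then match Binet coefficients to force an impossible relation between powers of $\varphi$ and $\psi$. The main obstacle is handled by Theorem \ref{linear relations}: once the shift $j(n;N)-n$ is known to be independent of $n$, the remainder reduces to straightforward arithmetic in $\mathbb{Q}(\sqrt{5})$.

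Concretely, suppose for contradiction that $\sum_{i=0}^{4N-1} F(n+i) = C(N)\, F(j(n;N))$ holds for all $n$ with $C(N)$ a positive integer. By Lemma \ref{lem:fib-(4N)} this identity becomes
\[ F(2N)\, L(n+2N+1) \ =\ C(N)\, F(j(n;N)). \]
The Fibonacci recurrence fits the framework of Theorem \ref{linear relations} (with $r=1$), so there is a fixed integer $k := k(N)$ such that $j(n;N) = n + k$ for all sufficiently large $n$. Expanding both sides using Binet's formula $F(n) = (\varphi^n - \psi^n)/\sqrt{5}$ and $L(n) = \varphi^n + \psi^n$ turns the identity into
\[ F(2N)\bigl(\varphi^{2N+1}\varphi^n + \psi^{2N+1}\psi^n\bigr) \ =\ \frac{C(N)}{\sqrt{5}}\bigl(\varphi^k \varphi^n - \psi^k \psi^n\bigr). \]
Since $|\psi| < 1 < \varphi$, the sequences $\{\varphi^n\}$ and $\{\psi^n\}$ are linearly independent, so the coefficients of $\varphi^n$ and $\psi^n$ must match separately, yielding
\[ \sqrt{5}\,F(2N)\,\varphi^{2N+1} \ =\ C(N)\,\varphi^{k}, \qquad \sqrt{5}\,F(2N)\,\psi^{2N+1} \ =\ -C(N)\,\psi^{k}. \]

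Dividing the first equation by the second (both sides are nonzero, as $F(2N)\geq 1$ for $N \geq 1$ and $C(N)\geq 1$) and using $\varphi\psi = -1$, so that $\varphi/\psi = -\varphi^{2}$, I arrive at
\[ (-1)^{m}\,\varphi^{2m} \ =\ -1, \qquad \text{where } m := 2N+1-k. \]
Because $\varphi^{2m}$ is a positive real and $\varphi > 1$, this equation requires simultaneously that $m$ be odd (to produce the negative sign) and that $\varphi^{2m} = 1$ (forcing $m=0$), which are incompatible. This contradiction proves the theorem.
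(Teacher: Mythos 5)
Your proposal is correct, and it reaches the conclusion by a genuinely different route than the paper, even though both arguments launch from the same decomposition $\sum_{i=0}^{4N-1}F(n+i)=F(2N)L(n+2N+1)$ of Lemma \ref{lem:fib-(4N)}. The paper stays elementary from that point: it expands $L(n+2N+1)=F(n+2N)+F(n+2N+2)$, sets $T_m:=F(m)+F(m+2)$, and reruns the gcd-and-growth argument from the Pell $4N+2$ case (consecutive $T_m$ are coprime, consecutive Fibonacci numbers are coprime, and ratio bounds force the two candidate indices to be adjacent, which is incompatible with the cofactor exceeding $1$). You instead invoke Theorem \ref{linear relations} to fix the shift $j(n;N)=n+k$ with $k$ independent of $n$, and then compare Binet coefficients: since no nontrivial combination $A\varphi^n+B\psi^n$ can vanish for all large $n$ (two distinct exponents already force $A=B=0$), the identity splits into two scalar equations whose quotient, via $\varphi/\psi=-\varphi^2$, gives $(-1)^m\varphi^{2m}=-1$ with $m=2N+1-k$; taking absolute values forces $m=0$, which then produces the wrong sign. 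Your route buys a short, fully explicit contradiction with no casework on candidate indices; its cost is that all the real work is outsourced to Theorem \ref{linear relations}, whereas the paper's gcd argument is self-contained modulo the Pell computation it mirrors. Two minor points worth making explicit: the paper's convention $C:\mathbb{N}\to\mathbb{N}$ is what guarantees $C(N)\ge 1$, which you need both to apply Theorem \ref{linear relations} and to divide the two coefficient equations; and the coefficient-matching step uses the identity with the \emph{same} $k$ at more than one large $n$, which is exactly what Theorem \ref{linear relations} supplies.
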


\begin{proof}
From Lemma \ref{lem:fib-(4N)} we have
\begin{equation*}
    \sum\limits_{i=0}^{4N-1}F(n+i)\ =\ F(2N)L(n+2N+1)\ =\ F(2N)(F(n+2N)+F(n+2N+2)).
\end{equation*} Now setting $T_m =F(m)+F(m+2)$ and repeating the proof of the $4N+2$ case for Pell numbers gives us the desired result.
\end{proof}

\subsection{Sums of Odd Numbers of Consecutive Terms}

We note that any Fibonacci number is one times itself and the sum of any three consecutive Fibonacci numbers is two times the third term. We prove that these are the only solutions for odd cases with the following theorem.

\begin{thm}
\label{thm:evenF}
 Let $F(n)$ denote the $n$\textsuperscript{${\rm th}$} Fibonacci number.  Fix any integer $N \geq 2$. There is no integer $C(N)$ such that for every $n$ there exists an integer index $j(n;N)$ such that the following equation holds:
$$\sum_{i=0}^{2N} F(n+i) \ = \ C(N) \cdot F(j(n;N)).$$
\end{thm}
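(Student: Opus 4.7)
The plan is to mirror the proof of Theorem~\ref{thm:pell-odd}, adapted to the Fibonacci sequence and centered on the standard telescoping identity
\[\sum_{i=0}^{2N} F(n+i)\ =\ F(n+2N+2) - F(n+1).\]
First I would rule out $C(N)=1$: for every $N\geq 1$ and $n\geq 1$ the right-hand side satisfies $F(n+2N+1)<F(n+2N+2)-F(n+1)<F(n+2N+2)$, so the sum lies strictly between two consecutive Fibonacci numbers and therefore cannot itself be a Fibonacci number.

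Next I would apply the Pisano-period argument in direct parallel with Theorem~\ref{thm:pell-odd}. Assuming $C(N)\geq 2$, the divisibility $C(N)\mid\sum_{i=0}^{2N}F(n+i)$ together with the shift $n\mapsto n+1$ gives $F(m+2N+1)\equiv F(m)\pmod{C(N)}$ for all $m$, so $\pi(C(N))\mid 2N+1$. The Fibonacci companion matrix $\bigl(\begin{smallmatrix}1&1\\1&0\end{smallmatrix}\bigr)$ has determinant $-1$; raising to the $\pi(C(N))$th power and taking determinants modulo $C(N)$ yields $(-1)^{\pi(C(N))}\equiv 1\pmod{C(N)}$, forcing $\pi(C(N))$ to be even whenever $C(N)>2$. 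Since $2N+1$ is odd, this forces $C(N)=2$.

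For the remaining case $C(N)=2$, I would invoke Theorem~\ref{linear relations} to write $j(n;N)=n+k(N)$ for some positive integer $k(N)$ and all sufficiently large $n$. The identity $F(n+2N+2)-F(n+1)=2F(n+k(N))$ evaluated at two consecutive large values of $n$, subtracted and simplified using the Fibonacci recurrence, yields the clean algebraic relation $F(2N+1)=2F(k(N)-1)$. The proof would then conclude by verifying the strict bounds
\[F(2N-1)\ <\ \tfrac{1}{2}F(2N+1)\ <\ F(2N)\qquad (N\geq 2),\]
which reduce, respectively, to the trivial inequalities $F(2N)>F(2N-1)$ and $F(2N-1)<F(2N)$. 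Since no Fibonacci number lies strictly between the consecutive Fibonacci numbers $F(2N-1)$ and $F(2N)$, the quantity $F(2N+1)/2$ cannot equal any $F(k(N)-1)$, contradicting the derived identity.

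The main obstacle is the fact that, unlike in the Pell setting, $\pi(2)=3$ is odd for the Fibonacci sequence, so the Pisano-period argument alone does not eliminate $C(N)=2$; that residual case must be disposed of by the explicit size comparison of the third step. Everything else---the telescoping identity, the determinant trick, and the application of Theorem~\ref{linear relations}---transfers essentially verbatim from the corresponding Pell proof.
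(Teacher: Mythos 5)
Your proposal is correct, and its skeleton matches the paper's: both reduce to $C(N)\in\{1,2\}$ via the Pisano-period/determinant argument borrowed from the Pell case (the paper only gestures at this with ``an argument of a similar flavor''), and both ultimately arrive at a relation of the form $F(2N+1)=C(N)\,F(\cdot)$. The difference is in how you get there and how you finish. The paper works through the identity $\varphi^{m}=F(m)\varphi+F(m-1)$ to extract $F(m-1)=F(2N+1)/C(N)$ and $F(m)=(F(2N+2)-1)/C(N)$, leaves $C(N)=1$ as a footnoted exercise, and kills $C(N)=2$ by invoking Carmichael's primitive divisor theorem to reduce to a finite check of indices up to $13$. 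You instead work directly from the telescoping identity $\sum_{i=0}^{2N}F(n+i)=F(n+2N+2)-F(n+1)$: the case $C(N)=1$ falls to the observation that the sum is trapped strictly between $F(n+2N+1)$ and $F(n+2N+2)$, and the case $C(N)=2$ falls to the sandwich $F(2N-1)<\tfrac12 F(2N+1)<F(2N)$, both of which reduce to $F(2N-1)<F(2N)$. This buys you a fully elementary endgame with no appeal to Carmichael and no finite case check; what you give up is only that the extraction of $F(2N+1)=2F(k(N)-1)$ from the identity at two consecutive large $n$ should be spelled out (e.g., note that both sides satisfy the Fibonacci recurrence and agree at two consecutive indices, hence agree for all $n$ down to $n=-1$ where $F(0)=0$, or equivalently match Binet coefficients); as stated it is telegraphic but entirely sound. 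Your remark that $\pi(2)=3$ is odd, so the Pisano argument alone cannot eliminate $C(N)=2$ as it does for Pell, correctly identifies the one place where the Fibonacci case genuinely requires extra work.
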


\begin{proof}
    The proof of Theorem \ref{linear relations}, specifically, \eqref{integer_constant} tells us that if the sum of $N$ consecutive Fibonacci numbers is $C(N)$-times another Fibonacci number, then
   $$
    b\ =\ \dfrac{(\varphi ^N-1)}{C(N)(\varphi-1)}\ =\ \dfrac{\sum\limits_{i=0}^{N-1}\varphi ^{i}}{C(N)} \ =\ \varphi^{\gamma}\quad  \text{for some $\gamma \in \mathbb N$.}
 $$
    Using an argument of a similar flavor to that in Section \ref{OddPell}, we deduce that $C(N)$ must either be $1$ or $2$. Now, Lemma \ref{lem:sum-fib} tells us that
    \begin{align}
    b\ &=\ \frac{\varphi\sum\limits_{i=1}^{N-1}F(i) +\sum\limits_{i=1}^{N-2}F(i)+1}{C(N)} \ = \  \frac{(F(N+1)-1)\varphi+F(N)}{C(N)},
    \end{align}
     where $C(N)$ is either 1 or 2. Since for $n\geq 3$, $b\geq \alpha$, we let $b=\alpha ^{m}$ where $m\geq 1$. Thus, we get $$\frac{(F(N+1)-1)\varphi+F(N)}{C(N)}\ =\ F(m)\varphi +F(m-1),$$ which implies that $$F(m-1) \ = \ \frac{F(N)}{C(N)} \quad \text{ and } \quad F(m) \ = \ \frac{F(N+1)-1}{C(N)}.$$

We now consider the case $C(N) = 2$. Note that  $C(N) = 1$ is trivially not possible, the proof is left as an exercise to the reader.\footnote{Hint: if $C(N)=1$, we see that if $m\neq 3$ then $m=N+1$ leads to a contradiction, therefore $m=3$ and thus $N=1$.}

   \ \\
\textbf{Case: $C(N) = 2$.}
    \\  \\ If $C(N)=2$ then Carmichael's Theorem \cite{rdc} tells us that for $n>13$, $F(n)$ has a prime factor not present in the previous Fibonacci numbers. Therefore, we only need to check the cases where $n \leq 13$. Checking for the smaller cases we realize that $N=3$ is the only case where $F(N)/2$ is another Fibonacci number.
\end{proof}

\section{A result about general Lucas sequences}

Theorem \ref{thm:sum-4N-pell} shows that
\begin{equation}
\label{eq:Pell}
\sum_{i=0}^{4N-1} P(n+i)\ =\ 2P(2N) P(n+2N).
\end{equation}
This can be interpreted as saying that for all positive integers $M\equiv -1\pmod {4}$, and all positive integers $n$, there is a positive integer $m$ such that
$$
P(m)\mid \sum_{i=0}^M P(n+i)\qquad {\text{\rm and}}\qquad \frac{1}{P(m)} \sum_{i=0}^M P(n+i)\ =\ O(1).
$$
Indeed, to see this write $N=(M+1)/4$, and take $m=n+2N$. Then the amount $O(1)$ above is in fact the constant $2P(2N)\in {\mathbb Z}$. A similar formula holds for Fibonacci numbers by Theorem \ref{thm:fibonacci-(4N+2)}. Namely,
\begin{equation}
\label{eq:Fibonacci}
\sum_{i=0}^{4N+1} F(n+i) \ = \ L(2N+1)F(n+2N+2).
\end{equation}
This again implies that for all $M\equiv 1\pmod 4$ and all positive integers $n$ there is a positive integer
$m$ such that
$$
F(m)\mid \sum_{i=0}^M F(n+i)\qquad {\text{\rm and}}\qquad \frac{1}{F(m)} \sum_{i=0}^M F(n+i)\ =\ O(1).
$$
Indeed, for this just write $M=4N+1$ and take $m=n+2N+2$.
Other results  obtained so far show that such formulas do not exist for even $M$'s (see Theorems \ref{thm:pell-(4N+2)} and \ref{thm:evenF}).

We take a Lucas sequence $\{U(n)\}_{n\ge 0}$ of recurrence $U(n+2) \ = \ rU(n+1)+sU(n)$, where $r,~s$ are nonzero coprime integers. We assume that the characteristic equation $X^2-rX-s=0$ has two distinct roots $\alpha,~\beta$ such that $\alpha/\beta$ is not a root of $1$. In particular,
$\Delta \ := \ r^2+4s\ne 0$. Then
$$
U(n) \ = \ a\alpha^n+b\beta^n\qquad {\text{\rm holds~for~all}}\qquad n\ge 0.
$$
The above formula is called the Binet formula and it applies to any nondegenerate linearly recurrent sequence having the characteristic equation $X^2-rX-s=0$, and the coefficients $a,~b$ above can be calculated in terms of $U(0),~U(1)$.
Since we restrict our investigation to Lucas sequences, for us $(U(0),U(1)) =  (0,1),~(2,r)$ depending on whether we look at Lucas sequences of first or second kind, respectively. So, our Binet formula has
$$
(a,b)\ :=\ \left(\frac{1}{\alpha-\beta},\frac{-1}{\alpha-\beta}\right),\qquad {\text{\rm or}}\qquad (a,b) \ := \ (1,1).
$$
With this formalism we state the following theorem.
\begin{thm}
\label{thm:Lucas_Sequence_General}
Assume that there is an infinite sequence of positive integers $\{N_j\}_{j\ge 1}$ such that for each $N:=N_j$, there is a constant $K:=K_j$ and infinitely many positive integers $n$ such that
for each one of them there exists a positive integer $m$ with
$$
U(m)\mid \sum_{i=0}^N U(n+i)\quad {\text{\rm and}}\quad \left|\frac{1}{U(m)}\sum_{i=0}^N U(n+i)\right|\ \le\ K.
$$
Then either $s=-1$ or all three conditions $s=1$, $r\in \{\pm 1,\pm 2\}$ and $N$ is odd hold. Conversely, if either $s=-1$ or both $s=1$ and $r\in \{\pm 1,\pm 2\}$, then the above sequences $\{N_j\}_{j\ge 1}$ and constants $K_j$ exist.
\end{thm}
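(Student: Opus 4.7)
My plan is to reduce the identity $\sum_{i=0}^N U(n+i) = V\cdot U(m)$ to a purely algebraic equation in $\mathbb{Z}[\alpha]$ and extract the structural constraints from there.

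\textbf{Necessary direction.} Fix $N = N_j$. Because $K_j$ bounds an \emph{integer} quotient, pigeonhole on $\{-K_j,\dots,K_j\}$ produces a nonzero integer $V$ with $\sum_{i=0}^N U(n+i) = V\cdot U(m_n)$ for infinitely many $n$. The Binet expansion $\sum_{i=0}^N U(n+i) = aA\alpha^n + bB\beta^n$, where $A=(\alpha^{N+1}-1)/(\alpha-1)$ and $B=(\beta^{N+1}-1)/(\beta-1)$, combined with the Archimedean estimate $|U(n)|\sim|a||\alpha|^n$ (WLOG $|\alpha|\ge|\beta|$), forces $m_n-n$ into a finite range; a second pigeonhole fixes it as a constant $k$. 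Since $\alpha/\beta$ is not a root of unity, the vectors $(\alpha^{n_i},\beta^{n_i})$ at two distinct $n_i$ in the infinite set are linearly independent, so matching the coefficients of $\alpha^n$ and $\beta^n$ gives $A=V\alpha^k$ and $B=V\beta^k$.

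The main algebraic breakthrough is to rewrite $A=V\alpha^k$ as $\alpha^{N+1}-1 = V(\alpha^{k+1}-\alpha^k)$ and expand both sides in the $\mathbb{Z}$-basis $\{1,\alpha\}$ of $\mathbb{Z}[\alpha]$ via the identity $\alpha^n = U(n)\alpha + sU(n-1)$ (meaning the first-kind Lucas sequence if $U$ is of second kind). Matching the constant and $\alpha$-coefficients yields
\begin{equation*}
U(N+1) = V\bigl(U(k+1)-U(k)\bigr), \qquad s\bigl(U(N) - V(U(k)-U(k-1))\bigr) = 1,
\end{equation*}
and the second equation forces $s\mid 1$, hence $s\in\{-1,1\}$. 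This single divisibility identity eliminates every $|s|\ge 2$ in one stroke, and I expect this to be the hardest conceptual step. When $s=1$, the relation $A/B = (\alpha/\beta)^k = (-1)^k\alpha^{2k}$ (using $\alpha\beta=-1$) survives. For $N$ odd, simplification with $\beta=-1/\alpha$ reduces this to $(1+\alpha)/(\alpha-1) = \pm\alpha^e$ for some integer $e$; taking the trace in $\mathbb{Q}(\alpha)$ and using $(\alpha-1)(\beta-1)=-r$ produces $\pm V^*_e = 4/r$, where $V^*_n=\alpha^n+\beta^n$ is the companion Lucas sequence. Integrality of $V^*_e$ forces $r\mid 4$; direct inspection eliminates $r=\pm 4$ (for which $|V^*_n|\ge 2$ throughout), leaving $r\in\{\pm 1,\pm 2\}$. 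For $N$ even, the analogous ratio $A/B = (\alpha^{N+1}-1)(1+\alpha)\alpha^N/\bigl((\alpha^{N+1}+1)(\alpha-1)\bigr)$ is an injective M\"obius function of $\alpha^{N+1}$ which converges to a nonzero limit; it can coincide with any given $\pm\alpha^{e_0}$ for only finitely many $N$, so all but finitely many $N_j$ must be odd.

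\textbf{Sufficient direction.} If $s=-1$, then $\beta=1/\alpha$ gives $A/B=\alpha^N$, and solving $A=V\alpha^{N/2}$ for $N=2M$ yields $V = 1+\sum_{j=1}^M V^*_j\in\mathbb{Z}$, producing the explicit identity $\sum_{i=0}^{2M} U(n+i) = V\cdot U(n+M)$ and the family $\{N_j=2j\}$ with $K_j=|V|$. If $s=1$ and $r\in\{\pm 1,\pm 2\}$, Theorems \ref{thm:sum-4N-pell} and \ref{thm:fibonacci-(4N+2)} already supply infinite families of odd $N_j$ for $r=2,1$; the cases $r=-1,-2$ follow by entirely analogous Binet calculations producing identities of the same shape with appropriately shifted indices. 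A subsidiary technical issue is the complex-conjugate subcase $|\alpha|=|\beta|$ (possible for sufficiently negative $s$), where the dominant-term asymptotics fail and one must instead invoke linear independence of $\{\alpha^{n_i},\beta^{n_i}\}$ at two distinct $n_i$ directly to recover $A=V\alpha^k$, $B=V\beta^k$.
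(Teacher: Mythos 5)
Your necessity argument follows a genuinely different and, where it applies, more elementary route than the paper's. In the real-root case your reduction to $A=V\alpha^k$, $B=V\beta^k$ via pigeonhole and linear independence of $(\alpha^{n_1},\beta^{n_1})$, $(\alpha^{n_2},\beta^{n_2})$ is sound, and the step of expanding $\alpha^{N+1}-1=V(\alpha^{k+1}-\alpha^k)$ in the basis $\{1,\alpha\}$ to read off $s\mid 1$ is a clean replacement for the paper's $S$-unit equation; likewise the trace computation giving $\pm(\alpha^e+\beta^e)=4/r$ replaces the paper's fundamental-unit analysis. The paper instead bounds $m=n+O(\log n)$ by Baker's theorem on linear forms in logarithms, passes to congruences modulo $D=U(m)$, and uses a multiplicative-dependence/$S$-unit argument plus Bilu--Hanrot--Voutier; your route avoids all of that heavy machinery in the real case.

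However, there is a genuine gap: the complex-conjugate case $\Delta=r^2+4s<0$ (which occurs for $s\le -2$ and small $|r|$, e.g.\ $(r,s)=(1,-2)$, and which must be \emph{excluded} to prove the theorem) is not handled. Your entire reduction rests on $|U(n)|\sim|a||\alpha|^n$ forcing $m_n-n$ into a finite range so that a pigeonhole fixes a single $k$. When $|\alpha|=|\beta|$ this asymptotic fails — $\sum_i U(n+i)=2\operatorname{Re}(aA\alpha^n)$ and $U(m)$ can each be as small as $|\alpha|^{n-c\log n}$ (and a priori smaller without Baker), so $m_n-n$ ranges over a set of unbounded size and no fixed $k$ recurs infinitely often. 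The patch you propose, ``invoke linear independence at two distinct $n_i$ directly to recover $A=V\alpha^k$,'' is circular: that step already presupposes the same $V$ and the same $k$ at both indices. This is exactly the case for which the paper deploys Baker's bound, the congruences $(\alpha/\beta)^{2n}\equiv\gamma$, $(\alpha/\beta)^{2m}\equiv 1\pmod{D_2}$, and the norm estimate $N_{\mathbb{K}/\mathbb{Q}}(D_2)=\exp(O_N(\sqrt{n}))$; some substitute for that machinery is unavoidable. Two smaller omissions: when $\Delta$ is a perfect square, $\{1,\alpha\}$ is not a basis and your coefficient-matching must be replaced by a congruence mod $\alpha$ and mod $\beta$ (and the paper's separate treatment of $\beta=\pm 1$ is needed); and when $k=0$ the identity $s(U(N)-V(U(k)-U(k-1)))=1$ is not produced, so you must separately rule out $\sum_{i=1}^{N}u(i)=0$ for infinitely many $N$ (easy in the real case, but it should be said). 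The sufficiency direction is fine and matches the paper's.
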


\begin{proof}
    For a proof, see Appendix \ref{subsec:Gen-Lucas}.
\end{proof}

\section{Generalized Pell and Fibonacci Numbers}

We adapt our previous results to a generalization of the Pell numbers that satisfies a   $(k+1)$\textsuperscript{{\rm st}} order recursion, where $k\in \bbn$. We also conjecture that for $k>1$, the sum of $N$ consecutive \emph{generalized Pell numbers} is a fixed integer multiple of another term of the sequence if and only if $N=2k+2$. Finally, we prove similar properties for a generalization of the Fibonacci numbers.

\subsection{Definition}
In \cite{ec} the authors consider the following generalization of the Pell numbers (we slightly modify their notation as we start our indexing at $n=0$).

\begin{dfn}
\emph{Generalized Pell $(k,i)$-numbers} are the solutions to the following recursion with given initial conditions:
\begin{align}
\label{rec:gen-pell}
&P_{k}^{i}(n)\ =\ 2P_{k}^{i}(n-1)+P_{k}^{i}(n-k-1)\\
\nonumber
&\text{with}\quad P_{k}^{i}(0)\ =\ P_{k}^{i}(1)\ = \ \cdots\  =\ P_{k}^{i}(i)\ =\ 0\\
\nonumber
&\text{and}\quad P_{k}^{i}(i+1)\ =\ P_{k}^{i}(i+2)\ =\ \cdots\ =\ P_{k}^{i}(k)\ =\ 1\\
\nonumber
&\text{where}\quad 0\ \leq\ i\ \leq\ k-1.\qquad (k\in\mathbb{N}).
\end{align}
\end{dfn}
%

\subsection{Sum of $2k+2$ Consecutive Terms}
Applying the following formula from \cite[\S4, Theorem 19]{ec}, we get
\begin{equation}\label{sumgenpell}
\sum\limits_{i=0}^{n}P_{k}^{k-1}(i)\ =\ \frac{1}{2}\left(-1+\sum\limits_{i=0}^{k}P_{k}^{k-1}(n-i+1)\right)
\end{equation}
where $n\geq k-1$. We prove a result similar to Theorem \ref{thm:sum-4N-pell} for the generalized Pell sequence.

\begin{thm} For $n \ge k$ we have
\begin{equation}\label{thm:gen-pell-1}
\sum\limits_{i=0}^{2k+1}P_{k}^{k-1}(n+i)\ =\
4P_{k}^{k-1}(n+2k).
\end{equation}
\end{thm}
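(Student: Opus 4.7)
The plan is to avoid invoking the telescoping identity \eqref{sumgenpell} altogether and instead attack the sum directly using the defining recurrence
\[
p(m) \ =\ 2\,p(m-1) + p(m-k-1), \qquad m \ \ge\ k+1,
\]
where I abbreviate $p := P_k^{k-1}$. The strategy is to split the sum in the middle, apply the recurrence to the upper half, and then reconcile the result with the trivial decomposition of the sum.

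First I write $S(n) := \sum_{i=0}^{2k+1} p(n+i) = A + B$, where
\[
A \ :=\ \sum_{i=0}^{k} p(n+i), \qquad B \ :=\ \sum_{i=k+1}^{2k+1} p(n+i),
\]
each of length $k+1$. Applying the recurrence to every summand of $B$ (legitimate because $n \ge k$ forces $n+i \ge k+1$ when $i \ge k+1$) and then reindexing the two resulting sums, I obtain $B = 2C + A$, where $C := \sum_{i=k}^{2k} p(n+i)$ is the ``middle'' window of $k+1$ consecutive terms. Thus $S = 2A + 2C$.

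Second, I exploit the fact that $A$ and $C$ overlap only at the single index $i = k$, so
\[
A + C \ =\ \sum_{i=0}^{2k} p(n+i) + p(n+k),
\]
giving $S = 2\sum_{i=0}^{2k} p(n+i) + 2p(n+k)$. Comparing this with the trivial identity $S = \sum_{i=0}^{2k} p(n+i) + p(n+2k+1)$ and cancelling yields
\[
\sum_{i=0}^{2k} p(n+i) \ =\ p(n+2k+1) - 2\,p(n+k),
\]
and substituting back gives $S = 2\,p(n+2k+1) - 2\,p(n+k)$.

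Finally, one more application of the recurrence at the single index $m = n+2k+1$ produces $p(n+2k+1) = 2\,p(n+2k) + p(n+k)$, which collapses the expression to $S = 4\,p(n+2k)$, as desired. The only real obstacle is careful bookkeeping to ensure that every invocation of the recurrence has argument $m \ge k+1$ and that no summand falls outside the range on which $p$ is defined; the hypothesis $n \ge k$ makes both conditions automatic throughout.
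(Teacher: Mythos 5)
Your proof is correct, and it takes a genuinely different route from the paper's. The paper proves the identity by induction on $n$: the base case $n=k$ is handled by a somewhat lengthy telescoping reduction of the partial sum $S_{3k+1}$, and the inductive step leans on the external partial-sum formula \eqref{sumgenpell} imported from Kilic's work (the paper also sketches a second, combinatorial proof via tilings). You instead give a direct, induction-free computation from the recurrence alone: splitting $S=A+B$ into two windows of length $k+1$, expanding $B$ termwise to get $B=2C+A$, using the single-index overlap $A+C=\sum_{i=0}^{2k}p(n+i)+p(n+k)$ together with the trivial decomposition of $S$ to solve for $\sum_{i=0}^{2k}p(n+i)$, and finishing with one more application of the recurrence at $n+2k+1$. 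I checked each reindexing and each invocation of the recurrence; all are legitimate, and in fact your argument only ever applies the recurrence at indices $m\ge n+k+1$, so it needs only $n\ge 0$ rather than $n\ge k$ --- your proof establishes a slightly stronger statement than the theorem as stated (one can verify, e.g., for $k=2$, $n=0$: $0+0+1+2+4+9=16=4P_2^1(4)$). What the paper's approach buys is a reusable partial-sum identity and a combinatorial interpretation; what yours buys is brevity, self-containedness, and the wider range of validity.
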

\begin{proof}
For an algebraic proof, see Appendix \ref{subsec:gen-pell-1}. For a tiling argument see Theorem \ref{thm:tilings}.
\end{proof}
Now, setting $k=1$ in Theorem \ref{thm:gen-pell-1}, we obtain the following corollary.

\begin{cor}\label{cor:sum-4-Pell}
The sum of any four consecutive Pell numbers is four times the third of the Pell numbers:
\begin{equation}\label{eqn:sum-4-pell}
\sum\limits_{i=0}^{3}P(n+i)\ =\ 4 P(n+2).
\end{equation}
\end{cor}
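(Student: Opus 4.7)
The plan is to deduce this corollary immediately by specialization from the more general theorem just proved. I would first check that the $k=1$ instance of the generalized Pell $(k,i)$-family recovers the classical Pell numbers. Setting $k=1$ forces $i=0$ (since $0\leq i\leq k-1$), and the recursion \eqref{rec:gen-pell} becomes $P_{1}^{0}(n) = 2P_{1}^{0}(n-1) + P_{1}^{0}(n-2)$ with $P_{1}^{0}(0)=0$ and $P_{1}^{0}(1)=1$. This is exactly the recurrence and initial data in Definition \ref{dfn:pell}, so $P_{1}^{0}(n) = P(n)$ for all $n\geq 0$.

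With this identification in hand, I would plug $k=1$ directly into Theorem \ref{thm:gen-pell-1}. The upper index becomes $2k+1 = 3$ and the shifted index on the right becomes $n+2k = n+2$, while the constant on the right is simply $4$. Thus \eqref{thm:gen-pell-1} collapses to
\begin{equation*}
\sum_{i=0}^{3} P(n+i) \ =\ 4\, P(n+2),
\end{equation*}
which is precisely \eqref{eqn:sum-4-pell}. The only mild book-keeping point is the hypothesis $n\geq k = 1$ needed for \eqref{thm:gen-pell-1}; the case $n=0$ is then handled by a direct check ($P(0)+P(1)+P(2)+P(3) = 0+1+2+5 = 8 = 4P(2)$).

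As a cross-check, I would also note that this is consistent with the $N=1$ case of Theorem \ref{thm:sum-4N-pell}: there the constant is $(a^{2}-b^{2})/\sqrt{2}$ with $a=1+\sqrt{2}$, $b=1-\sqrt{2}$, which evaluates to $4\sqrt{2}/\sqrt{2} = 4$, and the shifted index is $n+2$. Since the corollary is a pure specialization, there is no substantive obstacle; the only thing to be careful about is matching the index range of the parent theorem and confirming that $P_{1}^{0}$ literally coincides with $P$.
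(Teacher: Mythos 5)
Your proposal is correct and follows exactly the paper's route: the paper obtains the corollary by setting $k=1$ in Theorem \ref{thm:gen-pell-1}, which is precisely your specialization argument. Your extra care in verifying $P_{1}^{0}=P$, checking the $n=0$ case directly, and cross-checking against the $N=1$ case of Theorem \ref{thm:sum-4N-pell} is sound and, if anything, slightly more thorough than the paper's one-line derivation.
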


We also obtain the following result.
\begin{thm} For $0 \le i \le k-1$ we have
\begin{equation} \sum\limits_{j=0}^{2k+1}P_{k}^{i}(n+j) \ =\ 4P_{k}^{i}(n+2k). \end{equation}
\end{thm}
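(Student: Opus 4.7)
The plan is to bootstrap from the just-proved case $i = k-1$ via linearity. The recurrence in \eqref{rec:gen-pell} is a linear $(k+1)$-st order recurrence, so the set $V$ of sequences $\{a(n)\}_{n\geq 0}$ satisfying $a(n) = 2a(n-1) + a(n-k-1)$ for all $n \geq k+1$ is a $(k+1)$-dimensional vector space, each element of which is determined by its initial data $a(0), \ldots, a(k)$. For every fixed $n$, the functional $a \mapsto \sum_{j=0}^{2k+1} a(n+j) - 4\, a(n+2k)$ is linear in $a \in V$, so it suffices to check the identity on any single basis of $V$; the general case will follow by taking the appropriate linear combination.

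A convenient basis is given by the translates $b_s(n) := P_k^{k-1}(n+s)$ for $s = 0, 1, \ldots, k$. Each $b_s$ belongs to $V$ since the recurrence is shift-invariant. To see linear independence, I would examine the $(k+1) \times (k+1)$ matrix $B$ with $B_{j,s} := P_k^{k-1}(j+s)$ for $0 \le j, s \le k$. The initial conditions $P_k^{k-1}(0) = \cdots = P_k^{k-1}(k-1) = 0$ and $P_k^{k-1}(k) = 1$ force $B_{j,s} = 0$ whenever $j + s < k$ and $B_{j,s} = 1$ on the anti-diagonal $j + s = k$. Thus $B$ is anti-triangular with ones on the anti-diagonal, so $\det B = \pm 1 \neq 0$, and the $b_s$ form a basis of $V$.

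Consequently, for each $i \in \{0, 1, \ldots, k-1\}$ there exist unique scalars $c_0, \ldots, c_k$ (obtained by solving the linear system $B\mathbf{c} = (P_k^i(0), \ldots, P_k^i(k))$) such that
\[
P_k^i(n) \ =\ \sum_{s=0}^{k} c_s\, P_k^{k-1}(n+s) \quad \text{for all } n \geq 0.
\]
Applying Theorem \ref{thm:gen-pell-1} to each $b_s$ at the shifted index $m = n + s$, which is legitimate whenever $n + s \geq k$, yields $\sum_{j=0}^{2k+1} P_k^{k-1}(n+s+j) = 4\, P_k^{k-1}(n+s+2k)$. Multiplying by $c_s$ and summing over $s$ gives the desired identity for $n \geq k$.

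The only mild obstacle I anticipate is the book-keeping of the range of $n$: since the preceding theorem is valid only for $m \geq k$, one needs $n + s \geq k$ for every $s \in \{0, \ldots, k\}$, which is ensured by $n \geq k$ (matching the implicit range in the preceding theorem). Beyond this, no serious difficulty should arise — the heavy lifting was done in Theorem \ref{thm:gen-pell-1}, and the present result is essentially a corollary by linear algebra.
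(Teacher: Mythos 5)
Your proof is correct, and its overall strategy coincides with the paper's: reduce to the already-proved case $i=k-1$ by writing $P_k^i$ as a fixed linear combination of translates of $P_k^{k-1}$ and invoking linearity and shift-invariance of the identity. The only real difference is how that decomposition is obtained. The paper simply cites an explicit identity from Kilic \cite[\S2, Corollary 2]{ec}, namely $P_{k}^{k-1-j}(n)=P_{k}^{k-1}(n)+\sum_{i=0}^{j-1}P_{k}^{k-1}(n-k+i)$, which gives the combination with all coefficients equal to $1$ (and backward shifts). You instead prove abstractly that the translates $b_s(n)=P_k^{k-1}(n+s)$, $s=0,\dots,k$, form a basis of the $(k+1)$-dimensional solution space of the recurrence, via the anti-triangular matrix of initial values with ones on the anti-diagonal; this is self-contained, does not require knowing the coefficients $c_s$, and generalizes verbatim to any linear combination of shifts. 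Your bookkeeping of the range $n\geq k$ (needed so that Theorem \ref{thm:gen-pell-1} applies at every shifted index $n+s$) is also handled correctly, and in fact is slightly more careful than the paper's one-line deduction. Both arguments are valid; the paper's is shorter because it outsources the decomposition, yours is longer but independent of the external reference.
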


\begin{proof}
    In \cite[\S2, Corollary 2]{ec} the authors prove for $n > k$ that
    \begin{equation}
    P_{k}^{k-1-j}(n)\ =\ P_{k}^{k-1}(n)+\sum\limits_{i=0}^{j-1}P_{k}^{k-1}(n-k+i).
    \end{equation}
This along with Theorem \ref{thm:gen-pell-1} gives the result.
\end{proof}

\subsubsection{\textbf{Conjecture}}
The partial sum formula of Pell numbers, along with the identity
\begin{equation}
    P(n+k)+(-1)^kP(n-k)\ =\ Q(k)P(n),\qquad k\in\mathbb{N}\cup \{ 0\}
\end{equation}
proven in Lemma \ref{lem:pell-(4N+2)}, can be used to give an alternate proof of Theorem \ref{thm:sum-4N-pell}. Although we have a similar partial sum formula for $P_{k}^{k-1}$, there is no obvious way to extend this partial sum to a general property of adding consecutive generalized Pell numbers to get a multiple of another generalized Pell number for arbitrary $k>1$. For $k>1$, we haven't been able to find $N\neq 2k+2$ such that the sum of N consecutive generalized Pell-$(k,i)$ numbers is an integer multiple of another generalized Pell-$(k,i)$, suggesting the following conjecture.

\begin{conj}
Fix any integer $N > 0$. There exists an integer $C(N)$ such that for every $n$ there exists an integer index $j(n;N)$ such that 
 $$\sum_{i=0}^{N} P_{k}^{i} (n+i) \ = \ C(N) \cdot P_{k}^{i} \, (j(n;N))$$ holds if and only if $N = 2k+2$.

\end{conj}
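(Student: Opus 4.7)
The plan is to reduce the conjecture to a polynomial divisibility question in $\mathbb{Z}[x]$ and then exploit an over-determined linear system. Let $p(x) := x^{k+1} - 2x^k - 1$ denote the characteristic polynomial of the recursion. A Rouch\'{e}-type argument on $|x| = 1$ shows that $p$ has a unique dominant real root $\alpha_1 \in (2,\, 2 + 2^{-k})$ while its $k$ remaining roots all lie strictly inside the open unit disc. Following the proof of Theorem \ref{linear relations} --- which extends verbatim to the $(k+1)$st-order recurrence thanks to this dominant-root separation --- if
\[
\sum_{i=0}^{N-1} P_k^{k-1}(n+i) \ =\ C(N) \cdot P_k^{k-1}(j(n;N))
\]
holds for all sufficiently large $n$, then $j(n;N) = n + k^*$ for some fixed integer $k^* \ge 0$. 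Substituting the generalized Binet expansion $P_k^{k-1}(n) = \sum_{j=1}^{k+1} c_j \alpha_j^n$ and equating the coefficient of $\alpha_j^n$ (legitimate once one checks $c_j \ne 0$ for each $j$, which follows from non-degeneracy of the initial conditions together with the irreducibility of $p(x)$ over $\mathbb{Q}$, verifiable e.g.\ by reduction modulo a small prime), the identity becomes, in the quotient ring $R := \mathbb{Z}[x]/(p(x))$, the equality $\sum_{i=0}^{N-1} x^i = C(N)\, x^{k^*}$, or equivalently the divisibility
\[
p(x)\ \bigm|\ x^N - C(N)\, x^{k^*+1} + C(N)\, x^{k^*} - 1 \quad \text{in } \mathbb{Z}[x].
\]
The forward direction ($N = 2k+2$) is Theorem \ref{thm:gen-pell-1} and reflects the explicit factorization
\[
x^{2k+2} - 4x^{2k+1} + 4x^{2k} - 1 \ =\ (x^{k+1} - 2x^k - 1)\,(x^{k+1} - 2x^k + 1).
\]

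For the converse (aside from the trivial case $N = 1$), I would work in the basis $\{1, x, \ldots, x^k\}$ of $R$: writing $x^n = \sum_{j=0}^{k} a_j(n)\, x^j$, each coordinate sequence $a_j(\cdot)$ satisfies the same recurrence with $a_j(m) = \delta_{jm}$ for $0 \le m \le k$, and the ring identity splits into the system
\[
\sum_{i=0}^{N-1} a_j(i) \ =\ C(N)\, a_j(k^*), \qquad j = 0, 1, \ldots, k,
\]
which is $k+1$ equations in the two unknowns $(C(N), k^*)$, hence strictly over-determined for $k > 1$. My plan is to eliminate $(C(N), k^*)$ by taking pairwise ratios to produce a Diophantine condition in $N$ alone, then control the solution set using a two-sided size estimate at the roots of $p$: the dominant-root equation $\alpha_1^N - 1 = C(N)\, \alpha_1^{k^*}(\alpha_1 - 1)$ pins $C(N) \asymp \alpha_1^{N - k^* - 1}$, while each subdominant-root equation $\alpha_j^N - 1 = C(N)\, \alpha_j^{k^*}(\alpha_j - 1)$ forces $C(N)\, \alpha_j^{k^*}(\alpha_j - 1) \to -1$ as $N \to \infty$ since $|\alpha_j| < 1$. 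Together with the integrality of $C(N)$, these should confine $(N, k^*)$ to a finite window around $(2k+2, 2k)$, in which a direct computation rules out every triple except $(N, C(N), k^*) = (2k+2, 4, 2k)$. The extension from $P_k^{k-1}$ to $P_k^i$ for $0 \le i \le k-1$ follows from the identity $P_k^{k-1-j}(n) = P_k^{k-1}(n) + \sum_{s=0}^{j-1} P_k^{k-1}(n-k+s)$ of \cite{ec} already used in the paper, which writes each $P_k^i$ as an explicit $\mathbb{Z}$-linear combination of shifts of $P_k^{k-1}$.

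The main obstacle is making the ``finite window'' argument rigorous and uniform in $k$. The subdominant roots of $p$ are algebraic numbers inside the unit disc whose powers can, in principle, exhibit unexpected multiplicative coincidences, so a purely analytic bound on the window size may require input from the theory of linear forms in logarithms (a Baker-type lower bound). I expect the cleaner route to be algebraic: analyze the cofactor $R(x)$ in any factorization $x^N - C\, x^{k^*+1} + C\, x^{k^*} - 1 = p(x)\, R(x)$ and show, by comparing coefficients --- using that $R$ is forced to be monic with $R(0) = 1$ and $R(1) = 0$, and that the four-term sparsity of the left-hand side is extremely restrictive --- that $R(x) = x^{k+1} - 2x^k + 1$ is essentially the only possibility once $k > 1$. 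This ultimately reduces to a rigidity statement about the principal ideal $(p(x)) \subset \mathbb{Z}[x]$, which is the crucial technical point to settle.
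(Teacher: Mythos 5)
The first thing to note is that the paper does not prove this statement: it is explicitly a conjecture, and the authors establish only the ``if'' direction (the sum of $2k+2$ consecutive terms equals $4P_{k}^{k-1}(n+2k)$, Theorem \ref{thm:gen-pell-1}) plus partial constraints on the ``only if'' direction for odd $N$. So your proposal is an attempt at an open problem, and as written it does not close it. Your reduction is the right framework and is consistent with the paper's second-order arguments: granting that the analogue of Theorem \ref{linear relations} carries over to the $(k+1)$st-order recurrence and that all Binet coefficients are nonzero, the hypothesis does become the divisibility $p(x)\mid x^{N}-Cx^{k^{*}+1}+Cx^{k^{*}}-1$ with $p(x)=x^{k+1}-2x^{k}-1$, and your factorization $(x^{k+1}-2x^{k})^{2}-1=(x^{k+1}-2x^{k}-1)(x^{k+1}-2x^{k}+1)$ correctly recovers the known solution $(N,C,k^{*})=(2k+2,4,2k)$. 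But even these preliminaries are not fully justified: ``reduction modulo a small prime'' does not establish irreducibility of $p$ uniformly in $k$ (Perron's criterion fails here since $|{-2}|=1+|{-1}|$, not $>$), and the extension of Theorem \ref{linear relations} is not ``verbatim'' --- its second half, bounding $k(N)$, uses binomial expansions specific to the quadratic case.

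The genuine gap is the ``only if'' direction, which is the entire content of the conjecture and which you explicitly leave unresolved. The finite-window argument as sketched cannot work on its own: the subdominant relations $(\alpha_{j}^{N}-1)/(\alpha_{j}-1)=C\alpha_{j}^{k^{*}}$ have left-hand side tending to the fixed nonzero value $1/(1-\alpha_{j})$, so in modulus they force $C|\alpha_{j}|^{k^{*}}\asymp 1$, while the dominant relation forces $C\asymp\alpha_{1}^{N-k^{*}}$; these two constraints are compatible along an entire arithmetic ray $k^{*}\log(1/|\alpha_{j}|)\approx(N-k^{*})\log\alpha_{1}$, so size estimates plus integrality of $C$ confine $(N,k^{*})$ to a line, not a finite set. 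Ruling out that line requires the phases of $\alpha_{j}^{k^{*}}$, i.e.\ genuinely Diophantine input (linear forms in logarithms, or an $S$-unit argument as in Appendix A.4), none of which is supplied. The proposed algebraic alternative is in the same position: asserting that ``four-term sparsity is extremely restrictive'' for multiples of $p(x)$ is precisely the rigidity statement to be proved, and classifying lacunary multiples of a fixed polynomial is a known hard problem, not a coefficient comparison. Until one of these two routes is actually executed, the proposal proves only what the paper already proves.
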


\subsection{Sum of Odd Number of Consecutive Terms}\label{generalized:pell}

The same argument given for the classical Pell numbers \eqref{thm:pell-odd} can be generalized for the sequence $\{ P_{k}^{k-1}(n)\}_{n\geq 0} $ where $k$ is an odd natural number. This is because from \cite[\S2,Theorem 2]{ec} we have the following equality:
\begin{eqnarray*}
& &
\begin{pmatrix}
2 & 0 & \cdots & 0 & 1 \\
1 & 0 & \cdots & 0 & 0 \\
0 & 1 & \cdots & 0 & 0 \\
\vdots & \vdots & \ddots & \vdots & \vdots \\
0 & 0 & \cdots & 1 & 0
\end{pmatrix}^{n}
\ = \ \nonumber\\
& & \begin{pmatrix}
P_{k}^{k-1}(n+p+1) & P_{k}^{k-1}(n+1) & \cdots & P_{k}^{k-1}(n+p-1) & P_{k}^{k-1}(n+p) \\
P_{k}^{k-1}(n+p) & P_{k}^{k-1}(n) & \cdots & P_{k}^{k-1}(n+p-2) & P_{k}^{k-1}(n+p-1) \\
P_{k}^{k-1}(n+p-1) & P_{k}^{k-1}(n-1) & \cdots & P_{k}^{k-1}(n+p-3) & P_{k}^{k-1}(n+p-2) \\
\vdots & \vdots & \ddots & \vdots & \vdots \\
P_{k}^{k-1}(n+1) & P_{k}^{k-1}(n-p+1) & \cdots & P_{k}^{k-1}(n-1) & P_{k}^{k-1}(n)
\end{pmatrix}.
\end{eqnarray*}
\\
Since
$$\det\begin{pmatrix} 2&0&\dots&0&1\\1&0&\dots &0&0\\0& 1&\dots&0&0\\ \vdots&\vdots &\dots &\vdots &\vdots \\0 &0&\dots& 1&0 \end{pmatrix}\ =\ (-1)^{k}\ =\ -1,$$
we obtain the following theorem.

\begin{thm}
Fix any integer $N > 0$. There is no integer $C(N)$ such that for every $n$ there exists an integer index $j(n;N)$ such that the following equation holds:
 $$\sum_{i=0}^{2N} P_{k}^{k-1} (n+i) \ =\ C(N) \cdot P_{k}^{k-1} \, (j(n;N)).$$
\end{thm}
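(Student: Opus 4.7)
The plan is to adapt the proof of Theorem \ref{thm:pell-odd} (the classical odd-Pell case) to the generalized Pell $(k,k-1)$-setting, taking advantage of the matrix identity displayed immediately before the theorem statement. The hypothesis carried over from the surrounding text is that $k$ is odd, and the crucial input is the excerpt's computation that the companion matrix $M$ satisfies $\det M = (-1)^k = -1$; this powers the same parity obstruction that drove the classical case.

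Suppose, toward a contradiction, that a positive integer $C := C(N)$ satisfies the hypothesized identity $\sum_{i=0}^{2N} P_{k}^{k-1}(n+i) = C\cdot P_{k}^{k-1}(j(n;N))$ for every sufficiently large $n$. Consider first $C \ge 2$. The residue sequence $\{P_{k}^{k-1}(n)\bmod C\}_{n\ge 0}$ is purely periodic, with some period $\pi(C)$: its state $(P_{k}^{k-1}(n-k),\ldots,P_{k}^{k-1}(n))$ lies in the finite set $(\mathbb{Z}/C\mathbb{Z})^{k+1}$, and the recurrence $P_{k}^{k-1}(n) = 2P_{k}^{k-1}(n-1) + P_{k}^{k-1}(n-k-1)$ is invertible backwards by solving for $P_{k}^{k-1}(n-k-1)$. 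The displayed matrix identity then gives $M^{\pi(C)} \equiv I \pmod{C}$; taking determinants yields $(-1)^{\pi(C)} \equiv 1 \pmod{C}$, which forces $\pi(C)$ to be even whenever $C \ge 3$.

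Subtracting the hypothesized equation at index $n$ from that at $n+1$ gives $P_{k}^{k-1}(n+2N+1) \equiv P_{k}^{k-1}(n) \pmod{C}$ for every $n$, so $\pi(C) \mid 2N+1$. Since $2N+1$ is odd, any $C \ge 3$ forces $\pi(C)$ to be simultaneously even and a divisor of an odd number, a contradiction. The case $C = 2$ is handled separately: reducing the recurrence modulo $2$ collapses it to $P_{k}^{k-1}(n) \equiv P_{k}^{k-1}(n-k-1) \pmod{2}$, so starting from the initial block $(0,\ldots,0,1)$ of length $k+1$ one computes $\pi(2) = k+1$, which is even (as $k$ is odd) and therefore cannot divide the odd number $2N+1$.

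Finally, the case $C = 1$ is eliminated by a size estimate showing $P_{k}^{k-1}(n+2N) < \sum_{i=0}^{2N} P_{k}^{k-1}(n+i) < P_{k}^{k-1}(n+2N+1)$ for every $N \ge 1$, so the sum cannot coincide with any single term of the sequence. The lower bound is immediate. For the upper bound the plan is to apply the partial-sum identity \eqref{sumgenpell} to rewrite $\sum_{i=0}^{n+2N} P_{k}^{k-1}(i)$ as $\tfrac{1}{2}\bigl(-1 + \sum_{j=0}^{k} P_{k}^{k-1}(n+2N+1-j)\bigr)$, and then to bound each term by $P_{k}^{k-1}(n+2N+1-j) \le 2^{-j} P_{k}^{k-1}(n+2N+1)$, a consequence of $P_{k}^{k-1}(m) \ge 2 P_{k}^{k-1}(m-1)$; summing the resulting geometric series produces a total strictly less than $P_{k}^{k-1}(n+2N+1)$. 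I expect the main technical obstacle to be making this geometric-decay estimate fully rigorous for all $k \ge 1$ and all sufficiently large $m$: in the classical Pell case ($k=1$) the recurrence itself gives $P(m) > 2P(m-1)$ by inspection, but for general $k$ one must verify the analogous dominance from the delayed recurrence and the initial conditions before the upper bound closes.
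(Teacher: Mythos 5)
Your proposal is correct and follows essentially the same route as the paper, which simply cites the matrix/determinant discussion preceding the theorem together with the classical odd-Pell argument of Theorem \ref{thm:pell-odd}; you have filled in exactly the details the paper leaves implicit (the parity of $\pi(C)$ from $\det M=(-1)^k=-1$, the divisibility $\pi(C)\mid 2N+1$, the separate $C=2$ computation $\pi(2)=k+1$, and the size estimate for $C=1$). The geometric-decay bound you flag as a potential obstacle is in fact immediate from $P_{k}^{k-1}(m)=2P_{k}^{k-1}(m-1)+P_{k}^{k-1}(m-k-1)\ge 2P_{k}^{k-1}(m-1)$, so no genuine gap remains.
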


\begin{proof}
    The proof follows from the discussion above and from the proof of the odd case for the classical Pell numbers.
\end{proof}

We now prove the following stronger theorem for the generalized Pell sequence.

\begin{thm}
Fix any odd integer $N > 0$, and even integer $k \geq 0$. Suppose that there exists an integer $C(N)$ such that for every $n$ there exists an integer index $\, j(n;N;k)\,$ such that the following equation holds:
\begin{equation*}
    \sum\limits_{i=0}^{N-1}P_{k}^{k-1}(n+i)\ =\ C(N) P_{k}^{k-1}(j(n;N;k)).
\end{equation*}
Then
\begin{enumerate}[label=\roman*)]
    \item $2\nmid C(N)$ and
    \item $N>2k+2$.
\end{enumerate}
\end{thm}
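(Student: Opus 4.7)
The plan is to prove (i) via a parity argument modulo $2$, then deduce (ii) from a polynomial congruence obtained via the generalized Binet formula.

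For (i), I would first reduce the recurrence mod $2$: since $2P_k^{k-1}(n-1) \equiv 0 \pmod{2}$, the recurrence becomes $P_k^{k-1}(n) \equiv P_k^{k-1}(n-k-1) \pmod{2}$. Combined with the initial conditions (a single $1$ at index $k$ among $k$ zeros), induction gives $P_k^{k-1}(n) \equiv 1 \pmod{2}$ if and only if $n \equiv k \pmod{k+1}$. Hence $\sum_{i=0}^{N-1} P_k^{k-1}(n+i) \pmod{2}$ counts (mod $2$) the $i \in [0, N-1]$ with $n+i \equiv k \pmod{k+1}$. Writing $N = q(k+1) + r$ with $0 \le r \le k$, this count takes the value $q+1$ for $r$ residue classes of $n \bmod (k+1)$ and $q$ for the remaining $k+1-r$ classes. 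If $r \ge 1$ then both parities appear as $n$ varies, so $C(N)$ even would force the right-hand side always even, a contradiction. If $r = 0$, then both $N$ and $k+1$ are odd (since $k$ is even), whence $q = N/(k+1)$ is odd and the sum is always $\equiv 1 \pmod{2}$, again forcing $C(N)$ odd.

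For (ii), I use the generalized Binet formula $P_k^{k-1}(n) = \sum_{j=1}^{k+1} c_j \alpha_j^n$, where the $\alpha_j$ are the distinct roots of $\chi(x) := x^{k+1} - 2x^k - 1$ and every $c_j \ne 0$. Separability of $\chi$ is a short check ($\gcd(\chi, \chi') = 1$ since the nonzero root $2k/(k+1)$ of $\chi'$ is not a root of $\chi$), and $c_j \ne 0$ follows from computing residues of $G(x) = x^k/(1 - 2x - x^{k+1})$ at $x = 1/\alpha_j$ via partial fractions. A growth argument analogous to Theorem~\ref{linear relations} forces $j(n;N;k) = n + K$ for a fixed $K$ and all sufficiently large $n$; substituting and using linear independence of $\{\alpha_j^n\}_j$ gives $\sum_{i=0}^{N-1} \alpha_j^i = C(N)\alpha_j^K$ for every $j$, equivalent (since $\chi(1) = -2 \ne 0$) to
\[
\sum_{i=0}^{N-1} x^i \ \equiv \ C(N)\, x^K \pmod{\chi(x)}.
\]

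The main computation is to reduce this congruence using the induction-derived identity $x^{k+j} \equiv 2^j x^k + \sum_{i=0}^{j-1} 2^{j-1-i} x^i \pmod{\chi(x)}$ for $j \ge 1$. For $N \le k+1$ with $N > 1$, the left-hand side already has at least two nonzero coefficients (all equal to $1$), which cannot match any admissible reduction of $C(N) x^K$ (a single monomial if $K \le k$, or the structured pattern above if $K > k$; the $x^k$-coefficient alone yields a contradiction in the latter case). For $k+2 \le N \le 2k+1$, setting $m := N - k - 1 \in [1,k]$, the reduced LHS has coefficient $2^{m+1} - 1$ (odd) at $x^k$, while $C(N) x^K$ reduced has coefficient $C(N) \cdot 2^l$ at $x^k$ when $K = k+l$; the oddness of $2^{m+1} - 1$ forces $l = 0$, so $K = k$, but then $C(N) x^k$ has coefficient zero at $x^0$, contradicting the LHS's positive coefficient $2^m \ge 2$ at $x^0$. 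Hence no such relation holds for $1 < N \le 2k+1$; since $N$ odd with $k$ even rules out $N = 2k+2$, we conclude $N \ge 2k+3$, i.e., $N > 2k+2$. The main technical obstacle is establishing the non-degeneracy ingredients (separability of $\chi$ and non-vanishing of every $c_j$), both settled by the short computations sketched above; the trivial case $N = 1$ (with $C(1)=1$ and $j(n;1;k)=n$) is implicitly excluded.
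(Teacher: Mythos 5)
Your part (i) is the same parity argument the paper gives: reduce the recurrence mod $2$ to see that $P_k^{k-1}(n)$ is odd exactly when $n\equiv k\pmod{k+1}$, then exhibit windows of length $N$ whose sum is odd (the paper writes $N=q(2k+2)+r$ and picks an explicit shifted window; your residue-class count is equivalent and, if anything, cleaner in the case $r=0$).

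Part (ii) is where you genuinely diverge. The paper works modulo a prime $p\mid C(N)$: the hypothesis forces the Pisano period $\pi(p)$ to divide $N$, and the explicit values $P_k^{k-1}(2k+1)=2^{k+1}+1$, $P_k^{k-1}(2k+2)=2^{k+2}+4$, $P_k^{k-1}(2k+3)=2^{k+3}+12$ rule out $\pi(p)\le 2k+2$, whence $N\ge\pi(p)>2k+2$. You instead pass to the characteristic polynomial $\chi(x)=x^{k+1}-2x^k-1$, use separability and non-vanishing of the Binet coefficients to upgrade the identity to the polynomial congruence $\sum_{i=0}^{N-1}x^i\equiv C(N)x^K\pmod{\chi(x)}$, and compare coefficients after reducing with $x^{k+j}\equiv 2^jx^k+\sum_{i=0}^{j-1}2^{j-1-i}x^i$. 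Both routes are sound, and your reduction identity and the coefficient bookkeeping ($2^{m+1}-1$ at $x^k$, $2^m$ at $x^0$) check out. What your approach buys: it covers $C(N)=1$ uniformly, whereas the paper's argument needs a prime divisor of $C(N)$ and so tacitly assumes $C(N)>1$; it also proves the stronger statement that \emph{no} $N$ with $1<N\le 2k+1$ works, even or odd. What it costs: you must establish the order-$(k+1)$ analogue of Theorem~\ref{linear relations} (unique dominant root $\alpha_1\in(2,3)$, subdominant rather than bounded error terms), which you only sketch. Two small points to nail down. First, your claim that the reduction of $C(N)x^{k+l}$ has coefficient $C(N)2^l$ at $x^k$ fails at $l=k+1$, where the sum contributes an extra $+1$ and the coefficient becomes $C(N)(2^{k+1}+1)$, which is odd; this boundary case ($K=N=2k+1$) must either be excluded by noting $K<N$ strictly (which follows since $\alpha_1>2$ gives $\sum_{i=0}^{N-1}\alpha_1^i<\alpha_1^N$) or killed by the $x^0$-coefficient comparison. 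Second, as you note, $N=1$ must be excluded for (ii) to be true at all — a caveat the paper's statement shares.
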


\begin{proof}
\begin{enumerate}[label=\roman*)]
    \item By induction we know that $a_n:\ =\ P_{k}^{k-1}(n)\pmod{2}$ is of the form
\begin{equation*}
    a_n\ =\ \left\{\begin{array}{ll}1,\mbox{ if }(k+1)\mid (n+1)\\ 0,\mbox{ otherwise.}\end{array}\right.
\end{equation*}

Let $N=q(2k+2)+r$. Since $N$ is odd, we have $1\leq r\leq 2k+1$. Now take any $n$ such that $n\geq k+1$ and $(k+1)\mid n$. We now prove that $2 \nmid C(N)$. Define $$\begin{cases}
S_{n,N}\ :=\ \sum\limits_{i=-r+1}^{q(2k+2)}P_{k}^{k-1}(n+i) & \text{ if $r \leq k+1$}, \\ \\
 S_{n,N}\ :=\ \sum\limits_{i=0}^{N-1}P_{k}^{k-1}(n+i) & \text{ if $ r > k+1$}.
\end{cases}$$ Using the explicit form of $a_n$ we conclude that $S_{n,N}$ is odd in both cases, and therefore $C(N)$ must be odd.

\item By the same argument, we know that $\pi (p)\mid N$ where $p$ is any prime dividing $C(N)$. Similarly, using the previous argument, we also know that $p>3$. Now since
\begin{equation*}
    P_{k}^{k-1}(0)\ =\ P_{k}^{k-1}(1)\ =\ \cdots \ =\ P_{k}^{k-1}(k-1)\ =\ 0,
\end{equation*}
and $P_{k}^{k-1}(k)=1$, we must have
\begin{equation*}
    P_{k}^{k-1}(\pi(p)) \ \equiv\ P_{k}^{k-1}(\pi(p)+1)\ \equiv\ \cdots\ \equiv\ P_{k}^{k-1}(k-1) \ \equiv\ 0 \pmod{p}.
\end{equation*}
However, we know that
\begin{equation*}
P_{k}^{k-1}(k+i)\ =\ 2^{i}\mbox{ for } 1\leq i\leq k,
\end{equation*} and since $k\geq 2$, we have
\begin{align*}
    &P_{k}^{k-1}(2k+1)\ =\ 2^{k+1}+1\\
    &P_{k}^{k-1}(2k+2)\ =\ 2^{k+2}+4\\
    &P_{k}^{k-1}(2k+3)\ =\ 2^{k+3}+12.
\end{align*}
Now as $p>2$, we know that $\pi(p)>2k$. We notice that if $\pi(p)=2k+1$, then $p\mid 2^{k+1}+1$ and $p\mid 2^{k+2}+4$ which implies $p\mid 2$: a contradiction. Similarly, if $p=2k+2$, then $p\mid 2^{k+2}+4 $ and $p\mid 2^{k+3}+12$ which implies $p\mid 4$ which is also a contradiction.
Therefore, $\pi(p)>2k+2$ which implies $N>2k+2$.
\end{enumerate}
\end{proof}

\subsection{Tilings and Generalized Pell Sequence}
In \cite{asj} the authors proved certain properties related to the Pell numbers using tilings of an $n\times 1$ board. We generalized some of the properties for $P_{k}^{k-1}(n)$.
Let us first define a sequence $(p_{k,n})_{n\geq 0}$ such that
\begin{equation}
p_{k,n}\ :=\ P_{k}^{k-1}(n+k).
\end{equation}
It is not difficult to see that $p_{k,n}$ counts the number of tilings of an $n\times 1$ board using black $1\times 1$ squares, white $1\times 1$ squares and grey $(k+1)\times 1$ polyominoes.

\begin{thm}\label{thm:tilings} We have
\begin{equation}
p_{k,(k+1)n+r+1}\ =\ \left\{\begin{array}{ll}
\ds 2\sum\limits_{m=0}^{n}p_{k,m(k+1)+r},& 0\leq r\ <\ k\\
\ds 2\sum\limits_{m=0}^{n}p_{k,m(k+1)+r}+1,& r\ =\  k.\end{array}\right.
\end{equation}
\end{thm}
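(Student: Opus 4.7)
The plan is a one-step tile-deletion recurrence followed by induction on $n$. First, a tiling of an $N\times 1$ board in the allowed tile set decomposes by its rightmost tile: it is either a black or white $1\times 1$ square (each leaving an $(N-1)$-board) or a grey $(k+1)\times 1$ polyomino (leaving an $(N-k-1)$-board). This immediately yields the basic recurrence
\begin{equation*}
p_{k,N} \ = \ 2\,p_{k,N-1} + p_{k,N-k-1} \qquad (N \geq k+1),
\end{equation*}
which is the combinatorial incarnation of the defining recurrence for $P_k^{k-1}$.

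Specializing to $N = (k+1)n + r + 1$ and writing $q_n := p_{k,(k+1)n+r+1}$ and $s_n := p_{k,(k+1)n+r}$, the recurrence becomes $q_n = 2 s_n + q_{n-1}$ for $n \geq 1$, since $N - k - 1 = (k+1)(n-1) + r + 1$. Unrolling telescopically gives
\begin{equation*}
q_n \ = \ 2\sum_{m=1}^{n} s_m + q_0,
\end{equation*}
so the entire theorem reduces to computing the base case $q_0 = p_{k,r+1}$.

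For $0 \leq r < k$, the $(r+1)$-board has length at most $k$, so no polyomino fits and every tiling uses only colored squares, giving $p_{k,r+1} = 2^{r+1} = 2 \cdot 2^r = 2\,p_{k,r} = 2 s_0$, which matches the first clause. For $r = k$, a tiling of the $(k+1)$-board is either one of the $2^{k+1}$ all-square tilings or the unique single-polyomino tiling, so $p_{k,k+1} = 2^{k+1} + 1 = 2\,p_{k,k} + 1 = 2 s_0 + 1$, which is exactly the $+1$ in the second clause. The only (minor) subtlety is this bifurcation at the base case: the extra $+1$ corresponds combinatorially to the fact that when $r = k$ the minimum-length board $r+1 = k+1$ is just long enough to accommodate a single polyomino, and that tiling has no counterpart when $r < k$. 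Everything else is routine tile-deletion and telescoping, so no serious obstacle is expected.
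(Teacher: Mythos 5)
Your proof is correct, but it takes a different route from the paper's. The paper argues in one shot: it conditions on the position $t$ of the rightmost $1\times 1$ square, observes that everything to the right of $t$ must be a block of grey polyominoes (forcing $t=(k+1)m+r+1$ and contributing a unique tiling of that right portion), and reads off the sum directly, with the $+1$ for $r=k$ coming from the single all-polyomino tiling that has no rightmost square. You instead use the one-step tile-deletion recurrence $p_{k,N}=2p_{k,N-1}+p_{k,N-k-1}$, note that the index $(k+1)n+r+1$ drops to $(k+1)(n-1)+r+1$ under subtraction of $k+1$, and telescope, pushing all the combinatorial content into the base case $q_0=p_{k,r+1}$, where the dichotomy $2^{r+1}$ versus $2^{k+1}+1$ produces the two clauses. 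Both arguments are sound; yours has the advantage of being essentially mechanical once the recurrence is written down (and indeed could be run purely algebraically from the definition $p_{k,n}=P_k^{k-1}(n+k)$, since $P_k^{k-1}(k+i)=2^i$ for $0\le i\le k$ and $P_k^{k-1}(2k+1)=2^{k+1}+1$), and it sidesteps the point the paper leaves implicit, namely that a board segment of length divisible by $k+1$ containing no squares has exactly one polyomino tiling. The paper's decomposition, on the other hand, explains bijectively where each summand $2p_{k,m(k+1)+r}$ comes from. Your base-case analysis is the only place where care is needed, and you handle it correctly.
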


\begin{proof}
Firstly, assume that $r<k$. Now, consider the tiling of a $[(k+1)n+r+1]\times 1$ board, with the cells on the board numbered from left to right $1$ to $(k+1)n+r+1$. Let $t$ be the location of the last $1\times 1$ cell in the tiling. Black or white squares cannot cover the cells to the right of $t$, so they must be covered by $(k+1)\times 1$ polyominoes. Therefore, $t$ is of the form $(k+1)m+r+1$. In this case, the number of tilings of the board is $2p_{k,mk}$ (accounting for the fact that cell $t$ can be covered by either black or white $1\times 1$ squares), proving the identity.
\\ \\
\noindent Now, let us assume that $r=k$. We can still cover the board with black and white squares as well as grey polyominoes as we discussed in the previous case, yielding $2\sum\limits_{m=0}^{n}p_{k,m(k+1)+r}$ tilings of the board. However, since the length of the board is now $(k+1)(n+1)$, it is possible the board can be covered without black and white squares altogether. We add this new case to the total number of tilings, proving the second identity.
\end{proof}

Note that \eqref{sumgenpell} also follows from this result. An alternate proof using matrices is given in \cite[\S4, Theorem 19]{ec}, which can be generalized further. \\ \\
\begin{dfn}
Define the sequence $\left\{ f_k(n)\right\}$ as follows:
\begin{equation*}
\begin{aligned}
    f_{k}(n)\ := \ af_{k}(n-1)+bf_{k}(n-k-1)\hspace{20pt} a,b\in\bbn\\
    \mbox{with } f_{k}(0)\ =\ f_{k}(1)\ =\ \cdots \ =\ f_{k}(k-1)\ =\ 0\;\; \ \text{and } \ f_k(k)=1 \quad \mbox{ where }k\in\mathbb{N}.
\end{aligned}
\end{equation*}
\end{dfn}
\begin{dfn}
Define the sequence $\left\{ p_{k,n}\right\}$ as follows:
$$p_{k,n}\ :=\ f_{k}(n+k)\mbox{ for }n\in\mathbb{N}\cup \{ 0\}.$$
\end{dfn}
\begin{thm}
We have
\begin{equation}\label{gseqsum}
p_{k,(k+1)n+r+1}\ =\ \left\{\begin{array}{ll}a\sum\limits_{m=0}^{n}b^{n-m}p_{k,m(k+1)+r}, & 0\leq r\ <\ k \\ \\ a\sum\limits_{m=0}^{n}b^{n-m} p_{k,m(k+1)+r}+1, & r= k.\end{array}\right.
\end{equation}
\end{thm}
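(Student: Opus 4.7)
The plan is to adapt the weighted tiling argument of Theorem~\ref{thm:tilings} to the general parameters $a,b\in\bbn$. First, I would establish the correct combinatorial model: $p_{k,n}$ counts tilings of an $n\times 1$ board using $1\times 1$ squares that come in $a$ colors together with $(k+1)\times 1$ polyominoes that come in $b$ colors. Conditioning on the rightmost tile shows that this count obeys $T_n=aT_{n-1}+bT_{n-k-1}$, and a direct check of $T_0=1,\, T_1=a,\dots, T_k=a^k$ against the values produced by the definition of $f_k$ identifies $T_n$ with $p_{k,n}$ for every $n\ge 0$.

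Next, I would mimic the decomposition used in Theorem~\ref{thm:tilings}. Fix a tiling of a $[(k+1)n+r+1]\times 1$ board and let $t$ be the position of the rightmost square, if any such square exists. The cells to the right of $t$ must be covered entirely by polyominoes of length $k+1$, so $(k+1)n+r+1-t$ is a non-negative multiple of $k+1$. Writing $(k+1)n+r+1-t=(k+1)(n-m)$ pins $t$ to $(k+1)m+r+1$ for some integer $0\le m\le n$. The weighted count of tilings with this particular $t$ factors as
\[
p_{k,(k+1)m+r}\cdot a\cdot b^{n-m},
\]
coming respectively from the tiling of cells $1,\dots,t-1$, the color of the square at $t$, and the colors of the $n-m$ polyominoes to its right. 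Summing on $m$ yields $a\sum_{m=0}^{n}b^{n-m}p_{k,(k+1)m+r}$.

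To finish, I would split on whether the length is divisible by $k+1$. For $0\le r<k$ it is not (since $r+1\in\{1,\dots,k\}$), so every tiling contains at least one square and the displayed sum is the full count, yielding the first case. For $r=k$ the length equals $(k+1)(n+1)$, and in addition to the tilings counted above one must include the unique all-polyomino tiling shape, which contributes weight $b^{n+1}$; this specializes to the $+1$ appearing in the theorem under the normalization $b=1$ used in Theorem~\ref{thm:tilings}.

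The main obstacle is really only the first step, namely verifying that the recursively defined $p_{k,n}$ equals the weighted tiling count; once this combinatorial identification is in place, the rest of the proof reduces to the same bookkeeping argument used for Theorem~\ref{thm:tilings}. As a purely algebraic alternative the identity can be re-derived by induction on $n$: writing $S_n=\sum_{m=0}^{n}b^{n-m}p_{k,(k+1)m+r}$, the relation $S_n=p_{k,(k+1)n+r}+bS_{n-1}$ together with the recurrence $p_{k,N}=ap_{k,N-1}+bp_{k,N-k-1}$ applied at $N=(k+1)n+r+1$ reduces the case $n$ to the case $n-1$, with the base $n=0$ handled directly from the initial conditions.
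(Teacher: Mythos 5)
Your approach is exactly the one the paper intends: the published proof is simply ``analogous to the proof of Theorem~\ref{thm:tilings},'' and your weighted tiling model ($a$ colors of squares, $b$ colors of $(k+1)\times 1$ polyominoes), the verification that the weighted count satisfies the same recurrence and initial values as $p_{k,n}$, and the conditioning on the rightmost square are precisely the right adaptation. The one substantive point is the constant in the $r=k$ case: as you observe, the all-polyomino tiling contributes weight $b^{n+1}$, not $1$, so your argument proves the identity with $+\,b^{n+1}$ in place of the stated $+\,1$. This is not a gap in your proof but a correction to the theorem as printed: for $k=1$, $r=1$, $n=0$ one has $p_{1,2}=a^2+b$ while $a\,b^{0}p_{1,1}+1=a^2+1$, so the displayed formula fails whenever $b\neq 1$ and the correct additive term is indeed $b^{n+1}$ (which reduces to $1$ only in the $b=1$ setting of Theorem~\ref{thm:tilings}). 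Your proof is correct for the corrected statement; if you wanted to match the paper literally you would have to assume $b=1$, and it would be worth flagging this to the authors.
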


\begin{proof}
    Analogous to the proof of Theorem \ref{thm:tilings}.
\end{proof}

\subsection{Generalized Fibonacci sequence} \label{gen_fib_seq}
Define the order-$k$ generalized Fibonacci sequence by
\begin{align}
    &f_k(n)\ :=\ \sum\limits_{i=1}^{k}f_k(n-i)\\
    \nonumber
    &\mbox{ with }\ f_k(1)\ =\ f_k(2)\ =\ \cdots \ =\ f_k(k-1)\ =\ 0\mbox{ and }f_k(k)\ =\ 1.
\end{align}
Its generating matrix (see \cite{ed}) is given by
 \begin{equation}
     \begin{pmatrix} 1&1&\dots&1&1\\1&0&\dots &0&0\\0& 1&\dots&0&0\\ \vdots&\vdots &\dots &\vdots &\vdots \\0 &0&\dots& 1&0 \end{pmatrix}\\.
 \end{equation}

A similar argument to the generalized Pell case in Section \ref{generalized:pell} tells that the Pisano Period for $f_k(n)$ is even modulo $n$ whenever $n>2$ and $k$ is even, and yields the following Theorem.

\begin{thm}\label{thm:generalized_fibonacci_2n+1}
 Let $F_k(n)$ denote the $n$\textsuperscript{${\rm th}$} order-$k$ Fibonacci number where $k$ is even. Fix any $N > 0$. There is no integer $C(N)$ such that for every $n$ there exists an integer index $\, j(n;N)\,$ such that the following equation holds: $$\sum_{i=0}^{2N} F_k(n+i)\ =\ C(N) \cdot F_k(\, j(n;N)\,).$$
\end{thm}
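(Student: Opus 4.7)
My plan is to transplant the Pisano-period argument of Theorem \ref{thm:pell-odd} to this higher-order setting, using the companion matrix $M$ displayed just above the theorem statement. Two ingredients drive everything: (a) the determinant of $M$ has the right parity when $k$ is even, and (b) the hypothesis forces the Pisano period to divide an odd number.

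First I would verify that, for each integer $C>0$, the sequence $\{F_k(n)\bmod C\}_{n\ge 0}$ is periodic with minimal period $\pi(C)$: this is a pigeonhole argument on the $k$-tuple of consecutive values, combined with the fact that any $k$ consecutive terms determine the whole sequence both forward and backward. Writing the recurrence in matrix form $\vec v_{n+1}=M\vec v_n$, we obtain $M^{\pi(C)}\equiv I_k\pmod C$ in $GL_k(\mathbb Z/C\mathbb Z)$. Expanding $M$ along its last column (whose only nonzero entry is a $1$ in the top row) gives $\det M=(-1)^{k+1}$, which equals $-1$ when $k$ is even. Taking determinants of $M^{\pi(C)}\equiv I_k$ therefore yields $(-1)^{\pi(C)}\equiv 1\pmod C$, which forces $\pi(C)$ to be even whenever $C>2$.

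Next, assume for contradiction that $\sum_{i=0}^{2N}F_k(n+i)=C(N)F_k(j(n;N))$ for all sufficiently large $n$. Reducing modulo $C(N)$ and then subtracting the instance at $n$ from the instance at $n+1$ yields
\[
F_k(n+2N+1)\ \equiv\ F_k(n)\pmod{C(N)}
\]
for all large $n$. Since this holds for at least $k$ consecutive values and the recurrence has order $k$, the Pisano period satisfies $\pi(C(N))\mid 2N+1$; in particular $\pi(C(N))$ is odd. Combined with the parity result above, this forces $C(N)\le 2$.

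The remaining task is to rule out $C(N)\in\{1,2\}$. For $C(N)=2$ I would compute $\pi(2)$ directly from the initial segment $F_k(1)=\cdots=F_k(k-1)=0,\ F_k(k)=1$ and check that for $k$ even (and in the intended regime $k\ge 4$) one gets an even period, eliminating this case. For $C(N)=1$ I would argue by size: for every $N\ge 1$ and all large $n$, the sum $\sum_{i=0}^{2N}F_k(n+i)$ lies strictly between two consecutive terms of $\{F_k(\cdot)\}$, and so cannot equal $F_k(j)$ for any integer $j$. The main obstacle here is that when $2N+1>k$ the naive inequality $\sum_{i=0}^{2N}F_k(n+i)<F_k(n+2N+1)$ can fail, since the dominant root $\alpha$ of the characteristic polynomial satisfies $\alpha<2$. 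In that case I would instead locate the correct index $m=m(N)$ with $F_k(n+m)<\sum_{i=0}^{2N}F_k(n+i)<F_k(n+m+1)$ for all large $n$ by comparing leading Binet terms, and rule out equality using the kind of irrationality argument from Theorem \ref{linear relations}: equality would force $(\alpha^{2N+1}-1)/(\alpha-1)=\alpha^{m-n}$, contradicting that $\alpha$ is algebraic of degree $k\ge 2$ with conjugates of smaller modulus.
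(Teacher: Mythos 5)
Your reduction to $C(N)\le 2$ is exactly the paper's route (pigeonhole periodicity, $\det M=(-1)^{k+1}=-1$ for even $k$, hence $\pi(C)$ even for $C>2$, versus $\pi(C(N))\mid 2N+1$ odd), and your treatment of $C(N)=1$ is essentially the paper's sandwich $F_k(n+2N+1)<\sum_{i=0}^{2N}F_k(n+i)<F_k(n+2N+2)$; once the sum is strictly trapped between two consecutive terms the concluding irrationality argument you sketch is not needed.

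The genuine gap is your $C(N)=2$ step. You propose to compute $\pi(2)$ and ``check that for $k$ even one gets an even period,'' but this is false: modulo $2$ the order-$k$ sequence is the purely periodic block $0^{k-1}\,1\,1$ repeated, so $\pi(2)=k+1$, which is \emph{odd} precisely when $k$ is even. Hence the divisibility $\pi(C(N))\mid 2N+1$ never eliminates $C(N)=2$ when $(k+1)\mid(2N+1)$. Indeed it cannot be eliminated by any congruence argument, because the recurrence gives the identity $\sum_{i=0}^{k}F_k(n+i)=F_k(n+k)+\sum_{i=0}^{k-1}F_k(n+i)=2F_k(n+k)$, i.e.\ $C=2$ genuinely occurs for $2N+1=k+1$ (this also shows the theorem needs the implicit hypothesis $2N>k+1$ that the paper's proof uses but its statement omits). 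The paper instead disposes of $C(N)=2$ by a growth argument: since the dominant root satisfies $\lambda_k<2$, one has $f_k(r+2)<2f_k(r+1)$ for large $r$, and for $2N>k+1$ this yields $2F_k(n+2N)<\sum_{i=0}^{2N}F_k(n+i)<2F_k(n+2N+1)$, so the sum is strictly between two consecutive even multiples. You would need to replace your $\pi(2)$ computation with an argument of this kind (and add the range restriction on $N$) for the proof to go through.
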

\begin{proof}The proof of the theorem is analogous to Theorem \ref{thm:pell-odd}. For further details, see Appendix
\ref{subsec:proof_generalized_fibonacci_2n+1}.
\end{proof}

\newpage
\appendix

\section{Proofs}

\subsection{Proof of Lemma \ref{lem:pell-(4N+2)}}\label{subsec:proof_lem:pell-(4N+2)}
\begin{proof}
We proceed by induction on $k$, noting that the two base cases are $k=0$ and $k=1$. When $k=0$, we have
\begin{equation}\label{eqn:pell-(4N+2)-base-case-1}
P(n+0) + (-1)^0 P(n-0) \ = \  2P(n) = Q(0)P(n).
\end{equation}
When $k=1$, we have
\begin{equation}\label{eqn:pell-(4N+2)-base-case-2}
P(n+1) + (-1)^1 P(n-1) \ = \  [2P(n) + P(n-1)] - P(n-1) \ = \  2P(n) \ = \ Q(1)P(n).
\end{equation}
Now, we assume that
\begin{equation}\label{eqn:pell-(4N+2)-ind-1}
P(n+k-1)+(-1)^{k-1} P(n-k+1)\ =\ Q(k-1) P(n)
\end{equation}
and
\begin{equation}\label{eqn:pell-(4N+2)-ind-2}
P(n+k-2)+(-1)^{k-2} P(n-k+2)\ =\ Q(k-2) P(n).
\end{equation}
Using the recurrence relation \eqref{rec:pell}, we have
\[P(n+k)\ =\ 2 P(n+k-1) + P(n+k-2).\]
Rearranging \eqref{eqn:pell-(4N+2)-ind-2}, we get
\[P(n-k+2)\ =\ 2P(n-k+1)+P(n-k) \implies
P(n-k)\ =\ P(n-k+2)-2P(n-k+1).\]
Thus,
\begin{align}\label{eqn:pell-(4N+2)-1}
&P(n+k)+(-1)^kP(n-k) \nonumber\\
&= \ 2P(n+k-1)+P(n+k-2)+(-1)^k(P(n-k+2)-2P(n-k+1)).
\end{align}
Rearranging the right-hand side of \eqref{eqn:pell-(4N+2)-1} yields
\begin{equation}\label{eqn:pell-(4N+2)-2}
2(P(n+k-1)+(-1)^{k-1}P(n-k+1))+(P(n+k-2)+(-1)^{k-2}P(n-k+2)).
\end{equation}
We apply the inductive hypotheses \eqref{eqn:pell-(4N+2)-ind-1} and \eqref{eqn:pell-(4N+2)-ind-2} along with Definition \ref{dfn:pell-lucas} to this expression to conclude
\begin{align*}
2Q(k-1)P(n)+Q(k-2)P(n)\
&=\ (2Q(k-1)+Q(k-2)) P(n)\\
&=\ Q(k)P(n).
\end{align*}\end{proof}

\subsection{Proof of Theorem \ref{lem:fib-(4N)}}
\label{subsec:proof_lem:fib-(4N)}
\begin{proof}
We use induction and the following well-known properties of the Fibonacci and Lucas Numbers \cite[\S5.3, \S5.8]{tk}:
\begin{align}
&(i)\quad F(n-1)F(n+1)-F(n)^2\ =\ (-1)^n.\label{Fib:Prop-1}\\
&(ii)\quad F(n+k)\ =\ F(n)F(k-1)+F(n+1)F(k).\label{Fib:Prop-2}\\
&(iii)\quad F(n-1)+F(n+1)\ =\ L(n).\label{Fib:Prop-3}\\
&(iv)\quad \sum\limits_{i=0}^{n-1}F(i)+1\ =\ F(n+1).\label{Fib:Prop-4} \\
&(v)\quad \sum\limits_{i=0}^{k} F(n+i) = F(n+k+2) - F(n+1). \label{Fib:Prop-5}
\end{align}

We now prove Lemma \ref{lem:fib-(4N)}. First, begin by noting that for $n=0$, we have
\begin{align*}
\sum\limits_{i=0}^{4N-1}F(i)\ &=\ F(4N+1)-1\qquad \mbox{ (Using \eqref{Fib:Prop-4})}\\
&=\ F(2N)F(2N+2)+F(2N-1)F(2N+1)-1\qquad \mbox{ (Using \eqref{Fib:Prop-2})}\\
&=\ F(2N)(F(2N+2)+F(2N))\qquad \mbox{(Using \eqref{Fib:Prop-1}}\\
 &=\ F(2N)L(2N+1).\qquad \mbox{(Using \eqref{Fib:Prop-3})}\\
\end{align*}

Now, by our induction hypothesis, $\sum\limits_{i=0}^{4N-1} F(m+i) \ =\ F(2N)L(m + 2N + 1)$ holds for all $m < n+1$. We now expand $\sum\limits_{i=0}^{4N-1} F(n+1+i)$ using the following manipulations:

\begin{align*}
\sum\limits_{i=0}^{4N-1}F(n+1+i)\ &=\ F(n+4N+2)-F(n+2) \mbox{ (Using \eqref{Fib:Prop-5})}\nonumber \\
&= \ F(n+4N+1)+F(n+4N)-(F(n+1)+F(n)) \nonumber\\ & \ \ \ \ \ \ \mbox{(Using \eqref{def:fibonacci})}\\
&=\ F(n+4N+1)-F(n+1)+(F(n+4N)-F(N))  \nonumber\\ & \ \ \ \ \ \ \mbox{(Rearranging terms)}\\
&=\ \sum\limits_{i=0}^{4N-1}F(n+i)+\sum\limits_{i=0}^{4N-1}F(n-1+i) \nonumber\\ & \ \ \ \ \ \  \mbox{ (Using \eqref{Fib:Prop-5})}\\
&= \ F(2N)(L(n+2N+1)+L(n+2N)  \nonumber\\ & \ \ \ \ \ \  \mbox{(Induction hypothesis)}\\
&=\ F(2N)L(n+2N+2) \qquad \mbox{(Using Definition \ref{dfn:Lucas})},
\end{align*}\end{proof}
which yield the desired result.

\subsection{Proof of Theorem \ref{thm:non_degenerate_general_4}}\label{subsec:proof_specialized_linear_relations}
\begin{proof}
    From the proof of Theorem \ref{linear relations} we have
       $ C(N) = \sum\limits_{i=0}^{3}\alpha^{i-k(N)}$ for $C(N)$ a positive integer, where $2 \leq k(N) \leq 4$. Therefore, the only possible values for $k(N)$ are $2, 3, 4$. We now do casework based on the value of $k(N)$.\\ \

   \textbf{Case 1: $k(N) =2$.} We have

$$
\begin{aligned}\label{eqn:sum-4-c:2}
    C(N) &\ = \  \frac{1}{\alpha^2} + \frac{1}{\alpha} + 1 + \alpha \\
         &\ = \  1 + \frac{2r^2+4-2r\sqrt{r^2+4}}{4} + \frac{\sqrt{r^2+4}-r}{2} + \frac{r+\sqrt{r^2+4}}{2} \\
         &\ = \  1 + \frac{r^2+2}{2} + \left(\frac{2-r}{2}\right)\sqrt{r^2+4}.
\end{aligned}
$$
Since $r$ is an integer and there is no Pythagorean triple with $2$ as one of the terms, therefore $\sqrt{r^2+4}$ is irrational. Thus for $C(N)$ to be an integer, we must have $\frac{2-r}{2} = 0$, therefore $r=2$. \\ \\ \textbf{Case 2: $k(N) =3$.} We have
 $$
   \begin{aligned}
   C(N) \ &=\ \frac{1}{\alpha^3} + \frac{1}{\alpha^2} + \frac{1}{\alpha} + 1\nonumber\\
        &=\ \frac{(r^2+4)\sqrt{r^2+4}-3r(r^2+4)+3r^2\sqrt{r^2+4}-r^3}{8} \nonumber\\ &+\frac{2r^2+4-2r\sqrt{r^2+4}}{4} + \frac{\sqrt{r^2+4}-r}{2}\nonumber\\
        &=\ \frac{-4r^3+4r^2-16r+8}{8} + \left( \frac{4r^2+4 -4r+4}{8}\right)\sqrt{r^2+4}\nonumber\\
        &=\ \frac{-r^3+r^2-4r+2}{2} + \left( \frac{r^2-r+2}{2}\right)\sqrt{r^2+4}.
\end{aligned}
 $$

    Since $C(N)$ is an integer, we must have $r^2-r+2=0$. Since this equation has no integer roots, no such $r$ exists.
\ \\  \\
\textbf{Case 3: $k(N) =4$.} We have
    \begin{align}
       C(N) \ &= \ \frac{1}{\alpha^4} + \frac{1}{\alpha^3} + \frac{1}{\alpha^2} + \frac{1}{\alpha}\\ \nonumber \
       &=\ \frac{r^4-r^3+5r^2-3r=4}{2}+\left(\frac{-r^3+r^2-3r+1}{2}\right)\sqrt{r^2+4}.
    \end{align}
    Since $C(N) $ is an integer, we must have $-r^3+r^2-3r+1=0$. As this has no integer roots, so no such $r$ exists.
\end{proof}

\subsection{Proof of Theorem \ref{thm:Lucas_Sequence_General}}\label{subsec:Gen-Lucas}
\renewcommand{\thesection}{A.4}

We label the roots in such a way that $|\alpha|\ge |\beta|$. In particular, $|\alpha|>1$. Suppose for now that $\beta\ne \pm 1$ and we shall return to this case later.
We take an $N$ and calculate
$$
\sum_{i=0}^N U(n+i) \ = \ \sum_{i=0}^N (a\alpha^{n+i}+b\beta^{n+i}) \ = \ a\left(\frac{\alpha^{N+1}-1}{\alpha-1}\right)\alpha^{n}+b\left(\frac{\beta^{N+1}-1}{\beta-1}\right)\beta^n.
$$
Let us give upper and lower bounds for the size of $\sum_{i=0}^N U(n+i)$. First, when is it zero?

\begin{lemma}
We have that
$$
\sum_{i=0}^N U(n+i) \ \ne \  0
$$
for $N>29$.
\end{lemma}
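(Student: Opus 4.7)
The plan is to use the Binet formula to write the sum explicitly and then rule out vanishing by comparing moduli on the two sides.

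First I would apply Binet to get
\[
\sum_{i=0}^{N} U(n+i) \;=\; A\alpha^{n} + B\beta^{n},\qquad
A := a\,\frac{\alpha^{N+1}-1}{\alpha-1},\qquad
B := b\,\frac{\beta^{N+1}-1}{\beta-1}.
\]
I would dispose of degenerate cases first. Since $|\alpha|>1$, $\alpha$ is not a root of unity, so $\alpha^{N+1}\neq 1$ and $A\neq 0$. If $B=0$ (which requires $\beta^{N+1}=1$, and hence $|\beta|=1$), then the sum reduces to $A\alpha^{n}\neq 0$ and we are done. So we may assume $A,B$ are both nonzero, in which case the sum vanishes iff $(\alpha/\beta)^{n} = -B/A$. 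Because $\alpha/\beta$ is not a root of unity, the map $n\mapsto(\alpha/\beta)^n$ is injective, so for each $N$ there is at most one $n$ at which the sum could vanish.

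The core of the argument is to rule out that single candidate $n$ when $N>29$, by an archimedean size comparison of $|A\alpha^n|$ versus $|B\beta^n|$. Since $|\alpha|\ge|\beta|$ and both are algebraic integers with $\alpha\beta=-s\in\mathbb{Z}\setminus\{0\}$, I would split into the standard cases. \emph{Case 1:} $|\alpha|>|\beta|$ strict. If $|\beta|\le 1$, then $|B|\le |b|(|\beta|^{N+1}+1)/|\beta-1|$ is bounded, while $|A|\ge |a|(|\alpha|^{N+1}-1)/|\alpha+1|$ grows like $|\alpha|^{N+1}$, forcing $|A\alpha^n|>|B\beta^n|$ for all $n\ge 0$ once $N$ is large. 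If $|\beta|>1$, one instead estimates $|A/B|\gtrsim |\alpha/\beta|^{N+1}$ and concludes $|A\alpha^n|/|B\beta^n|\gtrsim |\alpha/\beta|^{N+n+1}>1$. \emph{Case 2:} $|\alpha|=|\beta|$, i.e.\ $\alpha,\beta$ are complex conjugates (forcing $s<0$ with $r^2+4s<0$). Then the sum equals $2\operatorname{Re}(A\alpha^n)$, and vanishing is equivalent to $\arg A + n\arg\alpha \equiv \pi/2 \pmod\pi$; because $\alpha/\beta=\alpha/\bar\alpha$ is not a root of unity, $\arg\alpha/\pi$ is irrational, so this congruence can be analyzed via a bound on $\arg A$ and discarded for $N$ large.

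The main obstacle will be extracting the explicit threshold $N>29$. The plan is to track constants through the worst case, which corresponds to the smallest admissible $|\alpha|$ (e.g.\ near the golden ratio) together with $|\beta|$ close to $1$ from below; the competing ratios $|\alpha|^{N+1}$ versus $2|b|/(1-|\beta|)$ and $|\alpha/\beta|^{N+1}$ versus polynomial expressions in $\alpha,\beta$ yield an absolute bound once one restricts to Lucas sequences of the first or second kind (so that $(a,b)$ is explicit). Bookkeeping of these constants, combined with a finite check for the handful of small-$|\alpha|$ sequences, should produce the uniform cutoff $N>29$.
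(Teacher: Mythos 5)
Your reduction to the equation $(\alpha/\beta)^n = -B/A$ and the observation that there is at most one candidate $n$ per $N$ are fine, and your archimedean comparison does dispose of the case of real roots with $|\alpha|>|\beta|$ (though you should bound $|B|$ by $|b|\,|1+\beta+\cdots+\beta^N|\le |b|(N+1)$ rather than dividing by $|\beta-1|$, which can be small). The genuine gap is your Case 2, and it is not a bookkeeping issue. The theorem explicitly allows $r^2+4s<0$, so $\beta=\bar\alpha$ and then $B=\bar A$, hence $|A\alpha^n|=|B\beta^n|$ for \emph{every} $n$: the modulus comparison gives nothing. Your fallback --- ``a bound on $\arg A$'' combined with irrationality of $\arg\alpha/\pi$ --- cannot close this case: irrationality of $\arg\alpha/\pi$ makes $n\arg\alpha \bmod \pi$ equidistributed, so the single candidate $n$ with $\arg A+n\arg\alpha\equiv \pi/2 \pmod{\pi}$ cannot be excluded by any archimedean estimate on $\arg A$; ruling out that exact equality is an arithmetic statement, and that is precisely the hard content of the lemma.

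The paper's proof supplies the missing arithmetic input and is uniform over real and complex roots. Using $a/b=\pm 1$, the vanishing of the sum is rewritten as
\begin{equation*}
\alpha^{n+N+1}-\eta\,\beta^{n+N+1} \ = \ \alpha^{n}-\eta\,\beta^{n},\qquad \eta\in\{\pm 1\},
\end{equation*}
i.e., as the coincidence of the terms of index $n$ and $n+N+1$ of a Lucas sequence of the first or second kind. The Bilu--Hanrot--Voutier primitive divisor theorem then says the term of index $n+N+1$ has a prime factor dividing no term of smaller index once $n+N+1>30$, which is incompatible with that coincidence; hence $N\le 29$. Note this is also where the threshold $29$ actually comes from --- it is the BHV constant $30$, not something you would recover by tracking constants through size estimates --- so your final paragraph's plan for producing the cutoff would not reproduce the stated bound even in the cases where your argument works.
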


\begin{proof} Assume that $\sum_{i=0}^N U(n+i)=0.$ Then

$$
a(\alpha^{N+1}-1)\alpha^n \ = \ -b(\beta^{N+1}-1)\beta^n.
$$
Since $a/b \ = \ \pm 1$, we get that
$$
(\alpha^{N+1}-1)\alpha^n \ = \ \eta (\beta^{N+1}-1)\beta^n\quad {\text{\rm for~some}}\quad \eta\in \{\pm 1\}.
$$
This gives
$$
\alpha^{n+N+1}-\eta \beta^{n+N+1} \ = \ \alpha^n-\eta \beta^n.
$$
In both cases $\eta=1$ and $\eta=-1$, this shows that the $n+N+1$th member of the Lucas sequence of the first or second kind equals the $n$th member of the same sequence. But recall that members of Lucas sequences have primitive divisors, which are prime numbers $p$ which divide the term of index say $m$ but no term of index smaller than $m$, at least when $m>30$ by results of Bilu, Hanrot and Voutier \cite{BHV}. Thus, we get that $n+N+1\le 30$ in contradiction with $N>29$.
\end{proof}

Since now we know that $\sum_{i=0}^N U(n+i)$ is not zero, we then get
\begin{eqnarray}
\label{eq:lflogs}
\left|\sum_{i=0}^{N} U(n+i)\right| &\ =\ & |a| \left|\frac{\alpha^{N+1}-1}{\alpha-1}\right||\alpha|^n\left|1+\left(\frac{b(\beta^{N+1}-1)(\alpha-1)}{a(\alpha^{N+1}-1)(\beta-1)}\right)\left(\frac{\beta}{\alpha}\right)^n\right|\nonumber\\
& \gg_N  & |\alpha|^{n-C_1(N)\log n},
\end{eqnarray}
where in the left--hand side above we applied a lower bound for a linear form in logarithms of algebraic numbers\footnote{For functions $f(x)$ and $g(x)$, we write $f(x) \gg g(x)$ if there exists a constant $C > 0$ such that $f(x) \geq C \cdot g(x)$ for all sufficiently large $x$.}. Here, $C_1(N)$ is a constant depending both on $N$ and on the Lucas sequence $\{U(m)\}_{m\ge 0}$. Now we want that $U(m)\mid \sum_{i=0}^N U(n+i)$ and the co-factor to be bounded. We get
$$
\left|\alpha\right|^m \ \gg \  |U(m|) \ \gg \ |\sum_{i=0}^N U(n+i)| \ \gg_N  \ |\alpha|^{n-C_1(N)\log n},
$$
getting $m \ \ge \  n-C_2(N)\log n$. The constant $C_2(N)$ depends also on the bound on the co-factor
$$
\frac{1}{U(m)}\sum_{i=0}^N U(n+i).
$$
In case the roots $\alpha,~\beta$ of $\{U(m)\}_{m\ge 0}$ are real, Baker's method is not necessary since $|\alpha|>|\beta|$ and the above argument shows that in fact $m\ge n-O(1)$.
Furthermore,
$$
|\alpha|^{m-C_2\log m} \ \ll \  |U_m| \ \le\ \left|\sum_{i=0}^N U_{n+i}\right|\ \ll_N \  |\alpha|^n
$$
(for the left--hand inequality see Theorem 3.1 on page 64 in \cite{ST}), which in turn gives $m\le n+C_3(N)\log n$. This shows that $m=n+O(\log n)$. Let us record this as a lemma.

\begin{lemma}
\label{lem:m}
If $N>29$, and $U_m\mid \sum_{i=0}^N U_{n+i}$ is such that the co-factor remains bounded, then
$$
m \ = \ n+O(\log n).
$$
The constant implied by the $O$-symbol above depends on the sequence $\{U(m)\}_{m\ge 0}$, the bound on the co-factor and the number $N$.
\end{lemma}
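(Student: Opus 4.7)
The plan is to assemble the lower and upper bounds on $m$ sketched in the paragraphs immediately preceding the lemma. Both directions reduce to comparing $|U(m)|$ against $\bigl|\sum_{i=0}^{N} U(n+i)\bigr|$, using the Binet representation $U(k)=a\alpha^k+b\beta^k$ together with the standing assumption $|\alpha|>|\beta|$ and $\alpha/\beta$ not a root of unity.

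For the lower bound $m\ge n-O(\log n)$, I would factor the sum as
\[
\sum_{i=0}^{N} U(n+i) \ = \ a\Bigl(\tfrac{\alpha^{N+1}-1}{\alpha-1}\Bigr)\alpha^{n}\bigl(1+\Lambda_n\bigr),\qquad \Lambda_n \ := \ \frac{b(\beta^{N+1}-1)(\alpha-1)}{a(\alpha^{N+1}-1)(\beta-1)}\Bigl(\tfrac{\beta}{\alpha}\Bigr)^{n}.
\]
The preceding nonvanishing lemma (valid since $N>29$) guarantees $1+\Lambda_n\ne 0$, so a lower bound for a linear form in two logarithms of algebraic numbers, in Matveev's or Baker--W\"{u}stholz's form, yields $|1+\Lambda_n|\gg_N |\alpha|^{-C_1(N)\log n}$ and hence $\bigl|\sum_{i=0}^{N} U(n+i)\bigr|\gg_N |\alpha|^{n-C_1(N)\log n}$. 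Since $U(m)\mid \sum_{i=0}^{N} U(n+i)$ with co-factor bounded by $K$, we have $|U(m)|\ge (1/K)\bigl|\sum_{i=0}^{N} U(n+i)\bigr|$, and the trivial bound $|U(m)|\le (|a|+|b|)|\alpha|^m$ from the Binet formula then forces $|\alpha|^m \gg_{N,K}|\alpha|^{n-C_1(N)\log n}$, whence $m\ge n-C_2(N,K)\log n$.

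For the reverse inequality $m\le n+O(\log n)$, bound the sum trivially by $\bigl|\sum_{i=0}^{N} U(n+i)\bigr|\le (N+1)(|a|+|b|)|\alpha|^{n+N}\ll_{N} |\alpha|^n$, and use Theorem~3.1 of Shorey--Tijdeman \cite{ST} to obtain $|U(m)|\gg |\alpha|^{m-C_3\log m}$ for $m$ large. Combined with $|U(m)|\le \bigl|\sum_{i=0}^{N} U(n+i)\bigr|$ (which follows from the divisibility hypothesis), this gives $|\alpha|^{m-C_3\log m}\ll_N|\alpha|^n$, and taking logarithms in base $|\alpha|$ yields $m\le n+C_4(N)\log n$. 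Assembling both inequalities produces $m=n+O(\log n)$, as claimed.

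The main obstacle is the careful application of the linear forms in logarithms estimate: one must verify the multiplicative independence (modulo roots of unity) of the algebraic numbers playing the role of the base, which uses precisely the hypothesis that $\alpha/\beta$ is not a root of unity, and one must estimate the heights of the numerator and denominator of $\Lambda_n$ in terms of $N$ in order to extract the correct dependence of $C_1(N)$ on $N$. The degenerate cases $\beta=\pm 1$, deferred by the author to a separate treatment, are not in the scope of this lemma; there $|U(m)|$ grows like $|\alpha|^m$ up to a bounded additive correction and the argument becomes essentially elementary without invoking Baker's method.
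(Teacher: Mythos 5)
Your proposal is correct and follows essentially the same route as the paper: the lower bound $m\ge n-O(\log n)$ via the nonvanishing of the sum for $N>29$, a linear form in logarithms applied to the factor $1+\Lambda_n$, and the bounded co-factor hypothesis; and the upper bound $m\le n+O(\log n)$ via the trivial estimate $\bigl|\sum_{i=0}^{N}U(n+i)\bigr|\ll_N|\alpha|^n$ combined with the Shorey--Tijdeman lower bound $|U(m)|\gg|\alpha|^{m-C\log m}$. Your added remarks on verifying the hypotheses of the Baker-type estimate and on the deferred degenerate case $\beta=\pm1$ are consistent with the paper's treatment.
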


Next, let $D:=U(m)$ and let us write down the congruences
$$
\sum_{i=0}^n U(n+i)\ \equiv \ 0\pmod D,\qquad U(m)\ \equiv \ 0\pmod D.
$$
This we rewrite as
$$
a\left(\frac{\alpha^{N+1}-1}{\alpha-1}\right)\alpha^n \ \equiv \  -b\left(\frac{\beta^{N+1}-1}{\beta-1}\right)\beta^n\pmod D,\quad a\alpha^m \ \equiv \  -b\beta^m \pmod D.
$$
Since $a=\pm b$ and $a,~b$ are reciprocals of algebraic integers, we can simplify by $a$ the above relations getting
$$
\left(\frac{\alpha^{N+1}-1}{\alpha-1}\right)\alpha^n \ \equiv \ \eta \left(\frac{\beta^{N+1}-1}{\beta-1}\right)\beta^n\pmod D,\quad \alpha^m \ \equiv \ \eta\beta^m\pmod D,
$$
where $\eta:=-b/a\in \{\pm 1\}$. Since $\alpha,~\beta$ are coprime as algebraic integers (that is the ideals generated by them in ${\mathcal O}_{\mathbb K}$, where ${\mathbb K}:={\mathbb Q}(\alpha)$ are coprime since $r$ and $s$ are coprime integers),
we get that $\beta$ is invertible modulo $D$. Thus, we get
$$
\left(\frac{\alpha^{N+1}-1}{\alpha-1}\right) \left(\alpha/\beta\right)^n \ \equiv  \ \eta\left(\frac{\beta^{N+1}-1}{\beta-1}\right)\pmod D,\qquad (\alpha/\beta)^m \ \equiv \ \eta\pmod D.
$$
Let $D_1:=\gcd(\alpha^{N+1}-1,D)$ as an ideal of ${\mathcal O}_{\mathbb K}$, and let $D_2:=D/D_1$. We then get
\begin{eqnarray}
\label{eq:1}
(\alpha/\beta)^{2n}  & \ \equiv \ &  \gamma\pmod {D_2},\quad \gamma \ := \ \left(\frac{(\beta^{N+1}-1)(\alpha-1)}{(\alpha^{N+1}-1)(\beta-1)}\right)^2,\nonumber\\
(\alpha/\beta)^{2m} & \ \equiv \ & 1\pmod {D_2}.
\end{eqnarray}
Clearly,
$$
|N_{{\mathbb K}/{\mathbb Q}}(D_1)| \ \le \ |N_{{\mathbb K}/{\mathbb Q}}(\alpha^{N+1}-1)| \ = \ O_N(1).
$$
 Recall the following lemma.
\begin{lemma}
If $\gamma$ and $\alpha/\beta$ are not multiplicatively dependent, then \eqref{eq:1} shows that
$$
N_{{\mathbb K}/{\mathbb Q}}(D_2) \ = \ \exp(O_N({\sqrt{ n}})).
$$
\end{lemma}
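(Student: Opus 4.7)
The plan is to combine a pigeonhole argument in the additive group $\mathbb Z/2m\mathbb Z$ with a standard height estimate, exploiting the fact that \eqref{eq:1} forces $\gamma$ to lie in the cyclic subgroup generated by $\alpha/\beta$ inside $(\mathcal O_{\mathbb K}/D_2)^*$, a subgroup whose order divides $2m$.

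First I would set $A := \lceil \sqrt{2m}\,\rceil$ and look at the $(A+1)^2$ pairs $(a,b)\in\{0,1,\ldots,A\}^2$ together with the residues $2na+b\pmod{2m}$. Since $(A+1)^2>2m$, pigeonhole produces two distinct pairs with the same residue; subtracting them gives a nonzero $(a,b)\in\mathbb Z^2$ with $|a|,|b|\le A$ and $2na+b\equiv 0\pmod{2m}$. The second congruence in \eqref{eq:1} then yields $(\alpha/\beta)^{2na+b}\equiv 1\pmod{D_2}$, and raising the first congruence to the $a$-th power and multiplying by $(\alpha/\beta)^b$ gives
$$
\gamma^a(\alpha/\beta)^b \ \equiv\ 1 \pmod{D_2}.
$$

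Next, the hypothesis that $\gamma$ and $\alpha/\beta$ are multiplicatively independent in $\mathbb K^*$ forces $\xi := \gamma^a(\alpha/\beta)^b - 1$ to be a nonzero algebraic number. Its absolute logarithmic Weil height satisfies
$$
h(\xi) \ \le\ |a|\,h(\gamma)+|b|\,h(\alpha/\beta)+\log 2 \ =\ O_N(A) \ =\ O_N(\sqrt m) \ =\ O_N(\sqrt n),
$$
where the last equality uses $m=n+O(\log n)$ from Lemma \ref{lem:m}. Since $D_2$ divides the numerator of the principal fractional ideal $(\xi)$, up to an $O_N(1)$ multiplicative loss absorbing the primes appearing in the denominators of $\gamma$ and $\alpha/\beta$, taking norms yields
$$
N_{\mathbb K/\mathbb Q}(D_2) \ \le\ \exp\bigl(d\, h(\xi)+O_N(1)\bigr) \ =\ \exp(O_N(\sqrt n)),
$$
where $d = [\mathbb K:\mathbb Q]\le 2$, which is the desired bound.

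The main obstacle lies in that final coprimality check: I must verify that every prime ideal of $\mathcal O_{\mathbb K}$ appearing in $D_2$ is coprime to the denominators of $\gamma^a(\alpha/\beta)^b - 1$, so that divisibility really passes from the algebraic number to its numerator ideal. This reduces to observing that no prime in $D_2$ divides $\alpha\beta = -s$ (using $D_2\mid (U(m))$ together with $\gcd(r,s)=1$, which forces $U(m)$ to be coprime to $s$), that no such prime divides $\alpha^{N+1}-1$ or $\beta^{N+1}-1$ (by the very definition $D_2 = D/D_1$), and that the contribution from the finitely many primes above $\alpha-1$ and $\beta-1$ can be swept into the $O_N(1)$ term. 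Once this bookkeeping is settled, the pigeonhole/height structure delivers the stated estimate.
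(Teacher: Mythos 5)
Your argument is correct and is essentially the paper's own proof: both use a pigeonhole step to produce a nontrivial relation $\gamma^{a}(\alpha/\beta)^{b}\equiv \pm 1\pmod{D_2}$ with exponents of size $O(\sqrt{n})$, invoke multiplicative independence (plus the non-root-of-unity hypothesis on $\alpha/\beta$) to see the corresponding algebraic number is nonzero, and then bound $N_{\mathbb K/\mathbb Q}(D_2)$ by the norm of that number, which is $\exp(O_N(\sqrt n))$. The only cosmetic differences are that you run the pigeonhole in $\mathbb Z/2m\mathbb Z$ while the paper makes $|un+vm|$ small, and that you phrase the final estimate via Weil heights where the paper clears denominators by hand; both handle the denominator/coprimality bookkeeping equivalently.
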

 \begin{proof}
 The proof is standard and can be found in many places. Put $X:=\max\{n,m\}$. Note that $X=O(n)$. By the pigeon hole principle there are integers $u,v$ not both zero with $|u|\le {\sqrt{X}},~|v|\le {\sqrt{X}}$ such that
 $|un+mv|=O({\sqrt{X}})$. Exponentiating in \eqref{eq:1} the first equation to $u$ and the second to $v$ and multiplying them we get
 $$
 (\alpha/\beta)^{2nu+2mv} \ \equiv \ \pm \gamma^{u}\pmod {D_2}.
 $$
 Thus, $D_2$ divides one of $(\alpha/\beta)^{|2nu+2mv|}\pm \gamma^{|u|}$ or $(\alpha/\beta)^{|2nu+2mv|}\gamma^{|u|}\pm 1$. None of these expressions is $0$ if $\gamma$ and $\alpha/\beta$ are multiplicatively independent (indeed, such expressions being $0$ and $(\alpha/\beta)$ and $\gamma$ being multiplicatively independent first forces $u=0$, then $2un+2vm=0$, so also $v=0$, which is not allowed), and their
 norms are rational numbers whose denominator and numerator are of size $\exp(O({\sqrt{X}}))=\exp(O({\sqrt{n}}))$.
 \end{proof}

In conclusion, if $\gamma$ and $\alpha/\beta$ are multiplicatively independent, then
\begin{eqnarray*}
|\alpha|^{n-C_3(N)\log n} & \ \le \  &  |U_m|\le |N_{{\mathbb K}/{\mathbb Q}}(D_1D_2)| \ \le  \ |N_{{\mathbb K}/{\mathbb Q}}(D_1)||N_{{\mathbb K}/{\mathbb Q}}(D_2)|\\
&  \ \le \ & \exp(O_N({\sqrt{n}})),
\end{eqnarray*}
and this is false for any fixed $N>29$ and large $n$. So, $\gamma$ and $\alpha/\beta$ are multiplicatively dependent.

\begin{lemma}
If $\gamma$ and $\alpha/\beta$ are multiplicatively dependent, then $s=\pm 1$. Furthermore, if $s=1$ then $N$ is odd.\end{lemma}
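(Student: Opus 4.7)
The plan is to derive both conclusions by exploiting a hypothetical multiplicative relation of the form $\gamma^a = (\alpha/\beta)^b$ with $(a, b) \neq (0, 0)$, through a combination of elementary reductions, local $\mathfrak{p}$-adic valuation analysis in $\mathcal{O}_\mathbb{K}$, and (for the second claim) explicit substitution. First, observe that $a$ cannot be zero: if $a = 0$, then $(\alpha/\beta)^b = 1$ with $b \neq 0$ forces $\alpha/\beta$ to be a root of unity, contradicting the standing hypothesis on $\{U(n)\}_{n \ge 0}$. Hence $a \neq 0$, and for every prime ideal $\mathfrak{p}$ of $\mathcal{O}_\mathbb{K}$ the relation $a\, v_\mathfrak{p}(\gamma) = b\, v_\mathfrak{p}(\alpha/\beta)$ must hold.

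For the conclusion $s = \pm 1$, I would argue by contradiction: suppose $|s| \geq 2$ and pick a rational prime $p \mid s$. Since $\alpha\beta = -s$, some prime ideal $\mathfrak{p}$ above $p$ satisfies $v_\mathfrak{p}(\alpha) > 0$. The key local observation is that whenever $v_\mathfrak{p}(\alpha) > 0$ we have $\alpha^k \equiv 0 \pmod \mathfrak{p}$ for all $k \ge 1$, so $v_\mathfrak{p}(\alpha^k - 1) = 0$; the symmetric statement holds for $\beta$. Splitting into cases according to whether $p$ is inert, split, or ramified in $\mathbb{K}/\mathbb{Q}$, the proportionality $a\, v_\mathfrak{p}(\gamma) = b\, v_\mathfrak{p}(\alpha/\beta)$ forces the left-hand side to be bounded by a quantity depending only on $N$ (controlled by the order of $\beta$ in the finite residue field $\mathcal{O}_\mathbb{K}/\mathfrak{p}$), while the right-hand side can be made unbounded (e.g., by considering primes $p$ with $v_p(s)$ large, or by varying $p$ across the prime divisors of $s$). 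This produces the required contradiction and forces $|s| = 1$.

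For the second claim, assume $s = 1$, so $\beta = -1/\alpha$ and $\alpha/\beta = -\alpha^2$. Substituting $\beta = -1/\alpha$ into the definition of $\gamma$ and simplifying yields
\[
\gamma^{1/2} \ = \ \pm \frac{\bigl((-1)^{N+1} - \alpha^{N+1}\bigr)(\alpha - 1)}{\alpha^N (\alpha^{N+1} - 1)(1 + \alpha)}.
\]
When $N$ is odd, $(-1)^{N+1} = 1$, the numerator factor $1 - \alpha^{N+1}$ cancels with $-(\alpha^{N+1} - 1)$ leaving $\gamma^{1/2} = \pm (\alpha - 1)/\bigl(\alpha^N (1+\alpha)\bigr)$, which is explicitly a rational number times a power of $\alpha$; one then checks $\gamma = (\alpha/\beta)^{-(N+1)}$ (up to roots of unity), confirming multiplicative dependence is consistent. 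When $N$ is even, the factor $(1 + \alpha^{N+1})/(\alpha^{N+1} - 1)$ survives; invoking the Bilu--Hanrot--Voutier primitive divisor theorem applied to $\alpha^{2(N+1)} - 1 = (\alpha^{N+1} - 1)(\alpha^{N+1} + 1)$ (valid for $N + 1 > 30$), we produce a primitive prime ideal divisor $\mathfrak{q}$ of $\alpha^{N+1} + 1$ which does not divide $\alpha$ (for otherwise $\alpha^{N+1} \equiv 0 \not\equiv -1 \pmod \mathfrak{q}$), hence $v_\mathfrak{q}(\alpha/\beta) = 0$ while $v_\mathfrak{q}(\gamma) \neq 0$, contradicting the proportionality. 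Finitely many small even $N$ are handled by direct verification, exploiting the fact that the infinite sequence $\{N_j\}$ in the theorem's hypothesis allows us to take $N$ arbitrarily large.

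The main obstacle I anticipate is the split-prime analysis in the second paragraph: when $\beta \equiv 1 \pmod \mathfrak{p}$ or the multiplicative order of $\beta$ in $\mathcal{O}_\mathbb{K}/\mathfrak{p}$ divides $N + 1$, the valuation $v_\mathfrak{p}(\beta^{N+1} - 1)$ becomes nontrivial and complicates the uniform bound on $v_\mathfrak{p}(\gamma)$. Weaving the local constraints across all primes into a single global contradiction (uniform in $p$ and in $v_p(s)$) is the heart of the argument, and it likely requires careful use of the cyclotomic factorization of $(\beta^{N+1}-1)/(\beta - 1)$ together with primitive-divisor bounds in the style of Bilu--Hanrot--Voutier.
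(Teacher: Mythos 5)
Your second paragraph, which carries the main claim $s=\pm 1$, does not work as written. Once the Lucas sequence and $N$ are fixed, every quantity in the identity $a\,v_{\mathfrak{p}}(\gamma)=b\,v_{\mathfrak{p}}(\alpha/\beta)$ is a fixed finite integer: there is no freedom to ``consider primes $p$ with $v_p(s)$ large'' (that valuation is whatever it is, e.g.\ $1$ when $s=2$), and ranging $\mathfrak{p}$ over the finitely many primes above divisors of $s$ produces only finitely many linear conditions on $(a,b)$, each of which merely pins down the ratio $b/a$ rather than forcing a contradiction. Concretely, at $\mathfrak{p}\mid\alpha$ one has $v_{\mathfrak{p}}(\alpha/\beta)=v_{\mathfrak{p}}(\alpha)>0$ and $v_{\mathfrak{p}}(\gamma)=2\,v_{\mathfrak{p}}\bigl((\beta^{N+1}-1)/(\beta-1)\bigr)\ge 0$, and nothing prevents the latter from being a positive integer that makes the proportionality consistent. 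The paper does not attempt a single-$N$ local argument; it rewrites the dependence relation as saying that $(\alpha^{N+1}-1)/(\beta^{N+1}-1)$ is an $S$-unit $s_0$ for a \emph{fixed} finite set $S$ independent of $N$, obtains the three-term $S$-unit equation $\alpha^{N+1}-s_0\beta^{N+1}+s_0=1$, and invokes the finiteness theorem for nondegenerate solutions. Because the theorem supplies infinitely many $N_j$, some $N$ must give a degenerate (vanishing-subsum) solution; the only viable subsum is $\alpha^{N+1}+s_0=0$, which yields $(-s)^{N+1}=(\alpha\beta)^{N+1}=1$ and hence both conclusions at once: $s\in\{\pm 1\}$, and $N$ odd when $s=1$. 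This global input --- the $S$-unit theorem combined with the infinitude of the $N_j$ --- is the missing idea in your proposal.

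Your third paragraph is closer to a valid argument for the implication ($s=1$ and dependence) $\Rightarrow$ ($N$ odd), but it needs two repairs. First, $(\alpha-1)/(\alpha^N(1+\alpha))$ is not ``a rational number times a power of $\alpha$'' in general (for $r=1$ it is $\pm(\sqrt{5}-2)\alpha^{-N}$), and $\gamma\ne(\alpha/\beta)^{-(N+1)}$ except for special $r$ --- this is exactly why the paper must go on to restrict $r$ to $\{\pm 1,\pm 2\}$; fortunately that direction is not needed for the lemma. Second, Bilu--Hanrot--Voutier applies to Lucas and Lehmer pairs, and $(\alpha,1)$ is not one since $\alpha$ is not a rational integer; to produce a primitive prime ideal of $\alpha^{N+1}+1$ you should pass to its norm, which for $N$ even and $s=1$ is the companion Lucas number $V_{N+1}=(\alpha^{2(N+1)}-\beta^{2(N+1)})/(\alpha^{N+1}-\beta^{N+1})$, and then quote BHV for the term of index $2(N+1)$. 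Even with these repairs, the lemma is not established until the $s=\pm 1$ step is fixed.
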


\begin{proof}
Assume that $\gamma$ and $\alpha/\beta$ are multiplicatively dependent. Then $\gamma^u=(\alpha/\beta)^v$ for some integers $u,~v$ not both $0$. If $u=0$, then $(\alpha/\beta)^v=1$ for some nonzero integer $v$ which makes
$\alpha/\beta$ a root of $1$ and this is not allowed. Thus, $u\ne 0$. We get
\begin{equation}
\label{eq:3}
\left(\frac{\alpha^{N+1}-1}{\beta^{N+1}-1}\right)^u \ = \ \left(\frac{\alpha}{\beta}\right)^v \left(\frac{\alpha-1}{\beta-1}\right)^u.
\end{equation}
Let $S$ be the set of $S$-units with respect to the finite set of valuations $\nu$ of ${\mathbb  K}$ such that $\nu$ is either infinite or one of
$$|\alpha-1|_{\nu},\quad |\beta-1|_{\nu},\quad |\alpha|_\nu,\quad |\beta|_{\nu}
$$
is not equal to $1$. Equation \eqref{eq:3} signals $(\alpha^{N+1}-1)/(\beta^{N+1}-1)=s_0$ for some $s_0\in S$. This gives
\begin{equation}
\label{eq:Sunit}
\alpha^{N+1}-s_0\beta^{N+1}+s_0 \ = \ 1,
\end{equation}
which is an ${\mathcal S}$-unit equations in three terms.
This has only finitely many solutions $N$ which are nondegenerate. The degenerate solutions correspond to the following.
\begin{itemize}
\item[(i)] $\alpha^{N+1}=1$. In this case also $\beta^{N+1}=1$, so $(\alpha/\beta)^{N+1}=1$, which is impossible.
\item[(ii)] $\alpha^{N+1}+s_0=0$. Then $s_0=-\alpha^{N+1}$ and $1=-s_0\beta^{N+1}=(\alpha\beta)^{N+1}=(-s)^{N+1}$. Thus, $s\in \{\pm 1\}$ and $N+1$ is even when $s=1$.
\item[(iii)] $\alpha^{N+1}-s_0\beta^{N+1}=0$. Then also $1=s_0=(\alpha/\beta)^{N+1}$ and this is not allowed.
\end{itemize}
Thus, the only possibilities are $s=-1$ or both $s=1$ and $N$ odd.

It remains to show that if $s=1$, only $r=\pm 1$ and $r=\pm 2$ work. Note that if the Lucas sequence of roots $(\alpha,\beta)$ satisfies the above properties, so will the ones of roots $(-\alpha,-\beta)$ as the ratio $\alpha/\beta$ does not change.
Thus, in case $s=1$ we assume that $\alpha>1$, so $r$ is positive.

We take another look at $\gamma$. When $s=-1$, we have $\beta=1/\alpha$ so
$$
\gamma=\left(\frac{(\beta^{N+1}-1)(\alpha-1)}{(\alpha^{N+1}-1)(\beta-1)}\right)^2 \ = \ \left(\frac{(1/\alpha^{N+1}-1)(\alpha-1)}{(\alpha^{N+1}-1)(1/\alpha-1)}\right)^2 \ = \ \alpha^{-2N},
$$
which of course is multiplicatively dependent over $\alpha/\beta=\alpha^2$.

Assume next that $s=1$. Since $N+1$ is even, then
$$
\gamma^2 \ =\ \left(\frac{(1/\alpha^{N+1}-1)(\alpha-1)}{(\alpha^{N+1}-1)(-1/\alpha-1)}\right)^2 \ =\ \alpha^{-2N} \left(\frac{\alpha-1}{\alpha+1}\right)^2.
$$
So, we want
$$
\gamma_1 \ := \ \frac{\alpha-1}{\alpha+1}
$$
to be multiplicatively dependent with $\alpha/\beta=-\alpha^2$. The above number is
$$
\frac{(r-2+{\sqrt{r^2+4}})/2}{(r+2+{\sqrt{r^2+4}})/2}.
$$
The numbers $(r\pm 2)+{\sqrt{r^2+4}})/2$ are algebraic integers of norms
$$((r\pm 2)^2-(r^2+4))/4 \ = \ \pm r.
$$
So, if $r$ is odd, they have odd norms. Their greatest common divisor (as ideal) divides $(\alpha+1)-(\alpha-1)=2$, but the norms of $\alpha\pm 1$
are coprime to $2$. Hence, $\alpha\pm 1$ are coprime. This shows that since their ratio is a unit, each one of them is a unit. Thus, $\alpha+1$ and $\alpha$ are both units and are larger than $1$. But letting $\delta$ be the fundamental unit in ${\mathbb K}={\mathbb Q}(\alpha)$, all units larger than $1$ are of the form
$$
1,~\delta,~\delta^2,\ldots,
$$
and $\delta\ge (1+{\sqrt{5}})/2$. Thus, since $\delta^3-\delta^2\ge \delta>1$, the only possibility is $\delta^2=\alpha+1$ and $\delta=\alpha$, giving $\delta^2=\delta+1$, so $\delta=\alpha=(1+{\sqrt{5}})/2$. This corresponds to $r=1$.

Assume next that $r$ is even and write $r=2r_0$. Then $\alpha\pm 1=r_0\pm 1+{\sqrt{r_0^2+1}}$. Suppose that $r_0$ is even. Then
$$
\frac{\alpha\pm 1}{2} \ = \ \frac{r_0\pm 1+{\sqrt{r_0^2+1}}}{2}
$$
are algebraic integers which are consecutive (their difference is $1$) and their ratio is a unit. So, each one of them is a unit. Thus, we arrive again at a quadratic field in which we have two units larger than $1$ which are
$(r_0\pm 1+{\sqrt{r_0^2+1}})/2$ whose difference is $1$. This gives
$$
\frac{r_0+1+{\sqrt{r_0^2+1}}}{2} \ = \ \frac{1+{\sqrt{5}}}{2},
$$
and this is impossible. Finally, suppose that $r_0$ is odd. Look at the numbers
$$
\frac{\alpha\pm 1}{\sqrt{2}} \ = \ \frac{r_0\pm 1+{\sqrt{r_0^2+1}}}{\sqrt{2}}.
$$
Their norms divide
$$
\left(\frac{r_0^2\pm 2r_0+1-(r_0^2+1)}{2}\right)^2 \ = \ r_0^2,
$$
so they have odd norms. This shows that $(\alpha+1)^2/2$ and $(\alpha-1)^2/2$ are coprime and their ratio is a unit so each one of them is a unit. In particular,
$$
\frac{\alpha+2+1/\alpha}{2}\qquad {\text{\rm and}}\qquad \frac{\alpha-2+1/\alpha}{2}
$$
are both units which differ by $2$. Since $r$ is even, we get that $\alpha\ge 1+{\sqrt{2}}$. The case $r=2$ gives us what we want. The case $r\ge 6$ gives $\alpha>6$, so $(\alpha-2+1/\alpha)/2>2$. Thus, putting again $\delta$ for the fundamental unit
in ${\mathbb Q}(\alpha)$ we get that $(\alpha+2+1/\alpha)/2$ and $(\alpha-2+1/\alpha)/2$ are among
$$
\delta,~\delta^2,~\delta^3,~\ldots
$$
and their difference is $2$. Since $\delta^3-\delta^2=\delta^2(\delta-1)\ge (1+{\sqrt{2}})^2(1+{\sqrt{2}}-1)>2$, we get that $\delta^2-\delta=2$, which has no quadratic solution $\delta$. This finishes the proof.

It remains to deal with the converses. Theorems  \ref{thm:sum-4N-pell}  and \ref{thm:fibonacci-(4N+2)} do it for the case $s=1$ and $r=1,2$ for the Lucas sequence of the first kinds (Fibonacci and Pell numbers). Similar formulas exist
with  Lucas numbers and Lucas-Pell numbers (so the Lucas sequences of the second kind which are companions of the Fibonacci and Pell numbers, respectively). Namely, with $\{L(n)\}_{n\ge 0}$ and
$\{Q(n)\}_{n\ge 0}$ given by $L(0)=Q(0)=2,~L(1)=1,~Q(1)=2$, we get
$$
\sum_{i=0}^{4N+1} L(n+i) \ = \ L(2N+1)L(2N+n+2)
$$
and
$$
\sum_{i=0}^{4N-1} Q(n+i) \ = \ 2P(2N) Q(2N+n).
$$
We also need to deal with $r=-1,-2$. That is, when $r=-1,~-2$, we get $U(n)=(-1)^nF(n)$ and $U(n)=(-1)^n P(n)$ for the Lucas sequences of the first kind. We get the identities
$$
\sum_{i=0}^{4N+1} (-1)^{n+i} F(n+i) \ = \ L(2N+1) ((-1)^{2N+n-1} F(2N+n-1))
$$
and
$$
\sum_{i=0}^{4N-1} (-1)^{n+i} P(n+i) \ = \ 2P(2N) ((-1)^{2N+n-1}P(2N+n-1)).
$$
These formulas are not unexpected since $(-1)^n F(n)=F(-n)$. In particular, the above two formulas follow right--away from  \eqref{eq:Fibonacci} and \eqref{eq:Pell}
if one allows the index $n$ to be negative in formulas \eqref{eq:Fibonacci} and \eqref{eq:Pell}. Similar formulas exist with the Lucas numbers of the second kind when $s=1$ and $r=-1,~-2$.
To see the case $s=-1$, note that for the case of the Lucas sequence of the first kind the sum is
$$
\sum_{i=0}^N U(n+i) \ = \ \frac{1}{\alpha-\beta} \left(\left(\frac{\alpha^{N+1}-1}{\alpha-1}\right)\alpha^n-\left(\frac{\beta^{N+1}-1}{\beta-1}\right)\beta^n\right).
$$
Using $\beta=1/\alpha$ this can be rewritten as
$$
\sum_{i=0}^N U(n+i) \ = \ \frac{1}{\alpha-\beta}\left(\frac{\alpha^{N+1}-1}{\alpha-1}\right)\left(\alpha^n-\alpha^{-n-N}\right).
$$
Taking $N$ to be even this becomes
$$
\left(\frac{\alpha^{N/2+1}-\alpha^{-N/2}}{\alpha-1}\right)\left(\frac{\alpha^{n+N/2}-\beta^{n+N/2}}{\alpha-\beta}\right) \ = \ \left(\frac{\alpha^{N/2+1}-\alpha^{-N/2}}{\alpha-1}\right)U(n+N/2),
$$
and one checks easily that the expression multiplying $U(n+N/2)$ is both an algebraic integer and Galois invariant (it does not change by replacing $\alpha$ with $1/\alpha$) so rational, hence it is an integer. A similar argument works
for Lucas sequences of the second kind.
\end{proof}

We still need to look at the case $\beta=\pm 1$. In this case $\alpha$ is an integer. When $\beta=1$, the congruences become
$$
\left(\frac{\alpha^{N+1}-1}{\alpha-1}\right)\alpha^n \ \equiv \ \eta (N+1)\pmod D,\qquad \alpha^m \ \equiv \ \eta \pmod D.
$$
The number $\gamma$ now becomes
$$
\gamma \ := \ \left(\frac{(N+1)(\alpha-1)}{\alpha^{N+1}-1}\right)^2.
$$
We aim to determine when this is multiplicatively dependent on \(\alpha\). This occurs only for finitely many values of \(N\), because for \(N > 5\), the term \(\alpha^{N+1} - 1\) possesses a primitive prime factor \(p\) such that \(p \equiv 1 \pmod{N+1}\), which does not simplify with the numerator \(N+1\). Consequently, if \(N > \max\{5, |\alpha|\}\), the numbers \(\gamma\) and \(\alpha\) are multiplicatively independent. The case where \(\beta = -1\) follows similarly and is left as an exercise for the reader.
\renewcommand{\thesection}{A}

\subsection{Proof of Theorem \ref{thm:gen-pell-1}}\label{subsec:gen-pell-1}

\begin{proof}
Let
\begin{equation}\label{eqn:Sn-gen-pell-1}
    S_n\ =\ \sum\limits_{i=0}^{n}P_{k}^{k-1}(i).
\end{equation}
Note that the first $k-1$ terms in this sum are 0.
\\ \\ We proceed by induction on $n$, starting at $n=k$ for the base case. Shifting indices on \eqref{eqn:Sn-gen-pell-1} for $n=k$ gives
\[\sum\limits_{i=0}^{2k+1}P_{k}^{k-1}(k+i)\ =\ \sum\limits_{i=k}^{3k+1}P_{k}^{k-1}(i)\ =\ S_{3k+1}.\]
Noting that the first $k-1$ terms of $S_n$ are all zero, we find
\begin{align*}
S_{3k+1}\ &=\ S_{3k}+2P_{k}^{k-1}(3k)+P_{k}^{k-1}(2k)\\
&=\ \underbrace{S_{2k-1}+2P_{k}^{k-1}(2k)+\sum\limits_{i=2k+1}^{3k-1}P_{k}^{k-1}(i)}_{\eqref{eqn-1:gen-pell-1}}+3P_{k}^{k-1}(3k).
\end{align*}
Now, we consider
\begin{equation}
\label{eqn-1:gen-pell-1}
S\ :=\ S_{2k-1}+2P_{k}^{k-1}(2k)+\sum\limits_{i=2k+1}^{3k-1}P_{k}^{k-1}(i).
\end{equation}
Since the first $k-1$ terms of the sum in \eqref{eqn:Sn-gen-pell-1} are zero,
\begin{align*}
    S_{2k-1}\ &=\ \sum\limits_{i=0}^{2k-1}P_{k}^{k-1}(i)\\
    &=\ \sum\limits_{i=k}^{2k-1}P_{k}^{k-1}(i)\\
    &=\ P_{k}^{k-1}(k)+\sum\limits_{i=k+1}^{2k-1}P_{k}^{k-1}(i).
\end{align*}
Then, applying recursion \eqref{rec:gen-pell} to \eqref{eqn-1:gen-pell-1}, we find
\begin{align}
S\ &=\ \sum\limits_{i=k+1}^{2k-1}P_{k}^{k-1}(i)+P_{k}^{k-1}(2k+1)+\sum\limits_{i=2k+1}^{3k-1}P_{k}^{k-1}(i)\nonumber\\
&\ = \ \sum\limits_{i=k+1}^{2k-1}P_{k}^{k-1}(i)+2P_{k}^{k-1}(2k+1)+\sum\limits_{i=2k+2}^{3k-1}P_{k}^{k-1}(i)\nonumber\\
&\ = \ \sum\limits_{i=k+2}^{2k-1}P_{k}^{k-1}(i)+2P_{k}^{k-1}(2k+1)+P_{k}^{k-1}(k+1)+\sum\limits_{i=2k+2}^{3k-1}P_{k}^{k-1}(i)\nonumber\\
&\ \ \vdots \nonumber\\
&\ = \ 2P_{k}^{k-1}(3k-1)+P_{k}^{k-1}(2k-1)\ =\ P_{k}^{k-1}(3k),\end{align}
where the final reduction of $S$ results from alternatively removing terms indexed by the lower bounds of each of the summations and then applying recursion \eqref{rec:gen-pell}. Thus we have $S_{3k+1}=S+3P_{k}^{k-1}(3k)=4P_{k}^{k-1}(3k)$, proving the base case.

Now, by the induction hypothesis we have
\begin{equation}\label{gen-pell-1:ind-hyp}
    \sum\limits_{i=0}^{2k+1}P_{k}^{k-1}(n-1+i)\ =\ 4P_{k}^{k-1}(n+2k-1),
\end{equation}
and by \eqref{sumgenpell} we have
\begin{align}
\nonumber
    \sum\limits_{i=0}^{2k+1} & P_{k}^{k-1}(n-1+i)\ =\ \sum\limits_{i=0}^{n+2k}P_{k}^{k-1}(i)-\sum\limits_{i=0}^{n-2}P_{k}^{k-1}(i)\\
    \label{gen-pell-1:ind-step-1}
    &=\ \frac{1}{2}\left( \sum\limits_{i=0}^{k}P_{k}^{k-1}(n+2k+1-i)-\sum\limits_{i=0}^{k}P_{k}^{k-1}(n-1-i) \right).
\end{align}
Combining \eqref{gen-pell-1:ind-hyp} and \eqref{gen-pell-1:ind-step-1} we get
\begin{equation}
    \sum\limits_{i=0}^{k}P_{k}^{k-1}(n+2k+1-i)
    -\sum\limits_{i=0}^{k}P_{k}^{k-1}(n-1-i)\ =\
    8P_{k}^{k-1}(n+2k-1).
\end{equation}
Furthermore, we have
\begin{align}
\nonumber
\sum\limits_{i=0}^{2k+1}P_{k}^{k-1}(n+i)\
&=\ \frac{1}{2}\left( \sum\limits_{i=0}^{k}P_{k}^{k-1}(n+2k+2-i)-\sum\limits_{i=0}^{k}P_{k}^{k-1}(n-i) \right)\\
\nonumber
&=\ \frac{1}{2}\bigl( P_{k}^{k-1}(n+2k+2)-P_{k}^{k-1}(n+k+1)+8P_{k}^{k-1}(n+2k-1)\\
\nonumber
&\ \ \ +\ P_{k}^{k-1}(n-k-1)-P_{k}^{k-1}(n)\bigl)\\
\nonumber
&=\ \frac{1}{2}\left( 2P_{k}^{k-1}(n+2k+1)+8P_{k}^{k-1}(n+2k-1) -2P_{k}^{k-1}(n-1)\right)\\
\nonumber
&=\ \frac{1}{2}\bigl( 2P_{k}^{k-1}(n+2k+1)-2P_{k}^{k-1}(n+k)+2P_{k}^{k-1}(n+k)\\
\nonumber
&\ \ \ - \ 2P_{k}^{k-1}(n-1)+8P_{k}^{k-1}(n+2k-1) \bigr)\\
\nonumber
&=\ \frac{1}{2}\left( 4P_{k}^{k-1}(n+2k)+4P_{k}^{k-1}(n+k-1)+8P_{k}^{k-1}(n+2k-1) \right)\\
&=\ 4P_{k}^{k-1}(n+2k).\end{align}
We obtain the last equation by using
\begin{equation}
    P_{k}^{k-1}(n+k-1)+2P_{k}^{k-1}(n+2k-1) \ = \ P_{k}^{k-1}(n+2k).
\end{equation}

Now, consider
\begin{equation}
\sum\limits_{i=0}^{2k+1}P_{k}^{k-1}(n+i)\
=\ \frac{1}{2}\left( \underbrace{\sum\limits_{i=0}^{k}P_{k}^{k-1}(n+2k+2-i)}_{s_1\ \eqref{gen-pell-1:s1}}-\underbrace{\sum\limits_{i=0}^{k}P_{k}^{k-1}(n-i)}_{s_2\ \eqref{gen-pell-1:s2}}  \right).
\end{equation}
We have the following explicit formulas for $s_1$ and $s_2$:
\begin{align}
\label{gen-pell-1:s1}
s_1 &= P_{k}^{k-1}(n+2k+2)
      - P_{k}^{k-1}(n+k+1)
      + \sum\limits_{i=0}^{k} P_{k}^{k-1}(n+2k+1-i), \\
\label{gen-pell-1:s2}
s_2 &= P_{k}^{k-1}(n)
      - P_{k}^{k-1}(n-k-1)
      + \sum\limits_{i=0}^{k} P_{k}^{k-1}(n-i-1).
\end{align}

We now note that the RHS of \ref{gen-pell-1:s1} and \ref{gen-pell-1:s2} are particularly amenable to manipulation, and therefore turn our attention towards $ \frac{1}{2} \left(s_1 - s_2 \right)$.

Thus, we have
\begin{align}
\nonumber
\frac{1}{2}(s_1-s_2)\
&=\ \frac{1}{2}\bigl( P_{k}^{k-1}(n+2k+2)-P_{k}^{k-1}(n+k+1)+8P_{k}^{k-1}(n+2k-1)\\
\nonumber
&\ \ \ + \ P_{k}^{k-1}(n-k-1)-P_{k}^{k-1}(n)\bigl)\\
\nonumber
&=\ \frac{1}{2}\left( 2P_{k}^{k-1}(n+2k+1)+8P_{k}^{k-1}(n+2k-1) -2P_{k}^{k-1}(n-1)\right)\\
\nonumber
&=\ \frac{1}{2}\bigl( 2P_{k}^{k-1}(n+2k+1)-2P_{k}^{k-1}(n+k)+2P_{k}^{k-1}(n+k)\\
\nonumber
&\ -2P_{k}^{k-1}(n-1)+8P_{k}^{k-1}(n+2k-1) \bigr)\\
\nonumber
&=\ \frac{1}{2}\left( 4P_{k}^{k-1}(n+2k)+4P_{k}^{k-1}(n+k-1)+8P_{k}^{k-1}(n+2k-1) \right)\\
&=\ 4P_{k}^{k-1}(n+2k).
\end{align}
\end{proof}

\subsection{Proof of Theorem \ref{thm:generalized_fibonacci_2n+1}}\label{subsec:proof_generalized_fibonacci_2n+1}

\begin{proof}
  Let the sum of any $2N+1$ consecutive terms in the $kth$ Fibonacci sequence be $C(N)$ times another integer in the Fibonacci Sequence.
  An argument similar to Theorem \ref{thm:pell-odd} rules out the cases $C(N)>2$. Therefore, we just need to take care of the cases when $C(N)=1,2$.

\ \\
   \textbf{Case 1:} $C(N)=1$. \\ \\ By induction on $r>2k+1$, we have
\begin{equation}\label{eqn:induct-gen-fib}
    \sum\limits_{n=0}^{r}f_k(n) \ < \ f_k(r+2),
\end{equation}
which implies
\begin{equation*}
    \sum\limits_{i=0}^{2N}f_k(n+i) \ \leq \  \sum\limits_{i=0}^{n+2N}f_k(i) \ < \ f_k(n+2N+2).
\end{equation*}
From the definition of the order-$k$ generalized Fibonacci sequence, for $2N>k+1$ we have
\begin{equation*}
f_k(n+2N+1) \ < \ \sum\limits_{i=0}^{2N}f_k(n+i),
\end{equation*} and thus $C(N)\neq 1$.
\\ \\ \textbf{Case 2:} $C(N)=2$. \\ \\ Define $$\lambda_k \ := \ \lim_{n\to \infty} \frac{f_k(n+1)}{f_k(n)}.$$ We now note that \cite[\S11, Theorem 9]{ar} states $\lambda_k+\lambda_{k}^{-k}=2$, which implies that $\lambda_k<2$ and hence for $n > 2k+1$ we have $f_k(n+1)/f_k(n)<2$. Applying \eqref{eqn:induct-gen-fib} thus implies
\begin{equation}
    \sum\limits_{n=0}^{r}f_k(n) \ < \ f_k(r+2) \ < \ 2f_k(r+1).
\end{equation}

Now since, $2N>k+1$, from the definition of the order-$k$ generalized Fibonacci sequence we have
\begin{align}
    f_k(n+2N) \ < \ \sum\limits_{i=0}^{2N-1}f_k(n+i)\nonumber \ \ \implies\ \ 2f_k(n+2N) \ < \ \sum\limits_{i=0}^{2N}f_k(n+i).
\end{align}

Lastly, we have
\begin{align}
    2f_k(n+2N)\ <\ \sum\limits_{i=0}^{2N}f_k(n+i)\ <\
    \sum\limits_{i=0}^{n+2N}f_k(i)\ <\ 2f_k(n+2N+1),
\end{align}
which implies $C(N) \neq 2$, completing the proof.
\end{proof}
\renewcommand{\thesection}{B}
\section{Computational Experiments}
The computational experiments for the paper were carried out in the Wolfram and Python Languages. The GitHub repository can be accessed from  \\ \bburl{https://github.com/navvye/Polymath-Pell-Numbers}.
\renewcommand{\thesection}{C}
\section{Thanks}
We thank the 2023 Polymath Jr REU for creating the opportunity for this work, Stephanie Reyes for numerous comments throughout the research and for numerous technical conversations regarding the paper, and the participants of the 21st International Fibonacci Conference for discussions on this topic related to our presentation. This work was partially supported by NSF Grant DMS2313292.
\ \\

\end{document}